\title{Generically hereditarily equivalent continua and topological characterization of generic maximal chains of generalized Ważewski dendrites.}
\newcommand{\cont}{\operatorname{Cont}}
    \newcommand{\comp}{\operatorname{Comp}}
    \newcommand{\moa}{\operatorname{MOA}}
    \newcommand{\full}{\operatorname{Full}}
    \newcommand{\bb}[1]{\mathbb{#1}}
    \newcommand{\scr}[1]{\mathscr{#1}}
    \newcommand{\ord}{\operatorname{ord}}
    \newcommand{\lr}[1]{\langle #1 \rangle}
    \newcommand{\ol}[1]{\overline{#1}}
    \newcommand{\ep}{\operatorname{End}}
    \newcommand{\Root}{\operatorname{Root}}
    \newcommand{\id}{\operatorname{id}}
\theoremstyle{definition}
\newtheorem{theo}{Theorem}[section]
\newtheorem{lem}[theo]{Lemma}
\newtheorem{defi}[theo]{Definition}
\newtheorem{prop}[theo]{Proposition}
\newtheorem{cor}[theo]{Corollary}
\newtheorem{nota}[theo]{Notation}
\newtheorem{question}[theo]{Question}
\newtheoremstyle{break}
  {\topsep}{\topsep}%
  {\itshape}{}%
  {\bfseries}{}%
  {\newline}{}%
\theoremstyle{break}
\author{Bryant Rosado Silva\footnote{https://orcid.org/0009-0009-3710-9599}
\footnote{Supported by the grant GACR 24-10705S and by Charles University Research Centre program No. UNCE/24/SCI/022}\\
Department of Mathematical Analysis\\
Faculty of Mathematics and Physics, Charles University\\
Prague, Czechia\\
E-mail: silva@karlin.mff.cuni.cz
\and
Benjamin Vejnar\footnote{https://orcid.org/0000-0002-2833-5385} \footnote{
Supported by the grant GACR 24-10705S
}\\
Department of Mathematical Analysis\\
Faculty of Mathematics and Physics, Charles University\\
Prague, Czechia\\
E-mail: vejnar@karlin.mff.cuni.cz}
\begin{document}

\maketitle
\begin{abstract}
The notion of hereditarily equivalent continua is classical in continuum theory with only two known nondegenerate examples (arc, and pseudo-arc).
In  this paper we introduce generically hereditarily equivalent continua, i.e. continua which are homeomorphic to comeager many subcontinua. We investigate this notion in the realm of Peano continua and we prove that all the generalized Wa\.zewski dendrites are such.
Consequently, we study maximal chains consisting of subcontinua of generalized Wa\.zewski dendrites and we prove that there is always a comeager orbit under the natural action of the homeomorphism group. As a part of the proof, we provide a topological characterization of the generic maximal chain.
\end{abstract}

\renewcommand{\thefootnote}{}

\footnote{2020 \emph{Mathematics Subject Classification}: Primary 54F16, Secondary 54F50, 37B45.}

\footnote{\emph{Key words and phrases}: continuum, Peano continuum, dendrites, generalized Wa\.zewski dendrites, maximal order arc, comeager set, generic object, homeomorphism.}

\section{Introduction}

In 1921 Mazurkiewicz proposes the question whether the arc is the only space $X$ with the property that every nondegenerate subcontinuum of $X$ is homeomorphic to $X$. The answer to this question is negative, with one example being found in different ways through the years (the pseudo-arc), thus a new terminology appeared: a space is a hereditarily equivalent continuum if it is homeomorphic to each of its nondegenerate subcontinua. For sake of simplicity we will abbreviate it as HEC. Are there any other HEC except the arc and the pseudo-arc? This question remains open until the present moment of writing of this article. The most recent advance that we are aware of is the publication \cite{classification-hoehn-oversteegen}. If one looks to the hyperspace of a continuum $X$, it is HEC if and only if
$$\mathcal{A}=\{K \in \cont(X) \ | \ K \simeq X\} = \cont(X) \setminus \operatorname{Fin}_1(X)$$
where $\operatorname{Fin}_1=\{\{x\} \ | \ x \in X\}$ and $\cont(X)=\{K \subseteq X  \ | \ K \text{ is a continuum }\}$. Both are considered with the Vietoris topology.
It is well known that $\operatorname{Fin}_1(X)$ is a closed set with empty interior, thus, $\mathcal{A}$ is an open dense subset of $\cont(X)$, hence a comeager subset. This leads to the introduction of the following definition:

\begin{defi}{}
 A continuum $X$ is called \textbf{generically hereditarily equivalent} if
 $$\{K \in \cont(X) \ | \ K \simeq X\}$$
 is a comeager subset of $\cont(X)$. In this case, we say that $X$ is GHEC.
\end{defi}

Then, two related questions comes to mind. First, is there an example of a space that is GHEC but not HEC? Second, can we find all the spaces that are GHEC?  Note that the second question is stronger than classifying all the spaces that are HEC.

This paper revolves around three main ideas, which are reflected in its title. First, we show that there exists not only one but a class of spaces that are GHEC but not HEC. These examples are the generalized Wa\.zewski dendrites. Moreover, we have found additional restrictions that the collection of homeomorphic copies can satisfy but still provide a comeager subset (see Theorem \ref{generalizeddendrite}). We also prove that every two elements of this collection are ambiently homeomorphic (see Theorem \ref{stronglyWM}).

We approach the second question in the case of Peano continua, closing the possibilities only to dendrites with arcwise densely many branching points or the arc (see Corollary \ref{PeanoGHEC}).

\begin{question}
Is every Peano GHEC a generalized Wa\.zewski dendrite or an arc?
\end{question}

Second, the GHEC condition is extended to the hyperspace of maximal order arcs yielding two other properties: GCGHEC when comeager many chains are made of comeager many copies of the space and GCHEC when comeager many chains are made of copies of the space apart from the starting point (see Definition \ref{defgenericchain}.) Then, the relation between these new properties and being GHEC is explored, with hereditary indecomposability allowing to connect them.

\begin{question}
    Are there other topological properties that link the GHEC, GCHEC, and GCGHEC conditions?
\end{question}

Finally, the largest part of this article is dedicated to the study of these maximal order arcs of the generalized Wa\.zewski dendrites. In Theorem \ref{ambientlyeqWM} we give a topological characterization of the generic maximal order arc of a generalized Wa\.zewski dendrite. This yields the following corollary:

\begin{cor}{}\label{CorComeagerOrbit}
 If $M \subseteq \{3,4,\ldots\} \cup \{\omega\}$, then the action $\operatorname{Homeo}(W_M) \curvearrowright \moa(W_M)$ given by
 $$h(\mathcal{C}) = \{h(K) \in \cont(W_M) \ | \ K \in \mathcal{C}\}$$
 has a comeager orbit.
\end{cor}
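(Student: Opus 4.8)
The plan is to reduce the corollary to Theorem~\ref{ambientlyeqWM} by a soft Baire-category argument, so that the genuine content lives in the theorem and the corollary is a short deduction. The guiding principle is that an action of a topological group on a space automatically has a comeager orbit as soon as some single orbit contains a comeager set: if $G\subseteq\moa(W_M)$ is comeager and $G$ is contained in one orbit $\mc{O}=\operatorname{Homeo}(W_M)\cdot\mc{C}_0$, then $\mc{O}\supseteq G$ is itself comeager, since any superset of a comeager set is comeager. Thus the entire task becomes the production of a comeager set of maximal order arcs that are pairwise related by an ambient homeomorphism of $W_M$.

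First I would record that $\moa(W_M)$ is a nonempty Baire space: it is a Polish space, arising as a suitable subspace of the iterated hyperspace $\cont(\cont(W_M))$ of the Peano continuum $W_M$. The only feature I actually need is Baire-ness, which guarantees that every comeager subset is dense and, in particular, nonempty. This is what makes the reduction in the previous paragraph meaningful: it lets me pick a base point inside the comeager set and conclude that its orbit swallows the whole comeager set.

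Next I would invoke Theorem~\ref{ambientlyeqWM}. Its topological characterization of the generic maximal order arc provides a property $P$ such that $G=\{\mc{C}\in\moa(W_M) : P(\mc{C})\}$ is comeager and, crucially, any two arcs in $G$ are ambiently homeomorphic: for $\mc{C},\mc{D}\in G$ there is $h\in\operatorname{Homeo}(W_M)$ with $h(\mc{C})=\{h(K)\in\cont(W_M) : K\in\mc{C}\}=\mc{D}$. Fixing any $\mc{C}_0\in G$ (possible since $G$ is a nonempty comeager subset of a Baire space), every $\mc{D}\in G$ lies in $\operatorname{Homeo}(W_M)\cdot\mc{C}_0$, so $G\subseteq\operatorname{Homeo}(W_M)\cdot\mc{C}_0$. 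By the principle above, this orbit is comeager, which is exactly the assertion of the corollary.

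The whole weight of the argument therefore rests on Theorem~\ref{ambientlyeqWM}, and I expect the main obstacle to lie entirely there. Two points are delicate. First, one must isolate topological invariants of the way a chain threads through $W_M$ — the order types of its subcontinua, the manner and location in which the chain absorbs branch points of each order in $M$, and the arcwise-dense branching structure — that are simultaneously \emph{generic} (holding on a comeager set) and \emph{complete} (pinning down the ambient position up to homeomorphism). Second, and this is the harder half, one must upgrade an abstract chain-equivalence of two generic arcs into a genuine ambient self-homeomorphism of $W_M$; this is precisely where the strong homogeneity and extension (Fra\"iss\'e-type) properties of the generalized Ważewski dendrites are indispensable, allowing local matchings of the two chains to be amalgamated into a global homeomorphism of the ambient space. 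Once this ambient equivalence is established comeagerly, the corollary follows immediately from the category bookkeeping above.
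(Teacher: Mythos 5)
Your proposal is correct and is exactly the deduction the paper intends: Theorem~\ref{ambientlyeqWM} supplies a comeager set of maximal order arcs that are pairwise ambiently equivalent, hence contained in a single orbit, and a superset of a comeager set in the Polish (hence Baire) space $\moa(W_M)$ is comeager. The category bookkeeping and the appeal to Baire-ness to guarantee the set is nonempty are the same as in the paper, which states the corollary as an immediate consequence of that theorem.
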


With a similar argumentation, we have also shown that there is another class of chains of the Wa\.zewski dendrite that are ambiently equivalent (see Theorem \ref{ambientlyeqW}).

\begin{question}
    Is there any space for which GCGHEC holds but not GCHEC?
\end{question}

Let us briefly discuss the motivation and importance of our results from the point of dynamical systems. It is known (due to Ellis) that for every topological group $G$ there is a minimal (continuous) action of $G$ on a compact space $M(G)$ such that any minimal action of $G$ on a compact space is a factor of $M(G)$. The action is called the universal minimal flow and the space $M(G)$ is called the universal minimal space of $G$. Recently, a lot of attention was paid to the questions, whether for a given group $G$, the space $M(G)$ is a singleton or whether $M(G)$ is metrizable. This question was especially considered when $G$ is the group of automorphisms of some very symmetric structures (see e.g. \cite{KechrisPestovTodorcevic}) or the group of homeomorphisms of very homogeneous spaces like the circle, the Cantor set, the Lelek fan, generalized Wa\.zewski dendrites \cite{BartosovaKwiatkowska, GlasnerWeiss,Kwiatkowska, Pestov}.
In some of these cases, the collection of maximal chains turned out to be an important tool for the description of the universal minimal flow.

While studying the universal minimal flow of the group $\operatorname{Homeo}(M)$, where $M$ is a closed manifold of dimension at least three, the authors of  \cite{GutmanTsankovZucker}
have shown that the maximal connected chains of $M$ have meager orbits. This is a completely different behavior than what we get in Corollary \ref{CorComeagerOrbit}.
Our positive result also contrasts with that of
\cite{basso2024surfaces}
where it is proved that closed manifolds of dimension at least two which are different from the sphere and the projective plane 
as well as one-dimensional Peano continua without local cut points 
do not have a generic chain.


\section{Preliminaries}

A metrizable space $X$ is Polish if it is a separable completely metrizable space. A subset of $A \subseteq X$ is $G_\delta$ if $A=\cap_{n \in \bb{N}}U_n$ where $U_n$ is an open set for every $n \in \bb{N}$. In completely metrizable spaces, the Baire Category Theorem states that this intersection is dense if and only if each $U_n$ is dense. A set $B \subseteq X$ is said to be nowhere dense if its closure has an empty interior. It is meager if it is the countable union of nowhere dense sets and comeager if its complement is meager. If a property $\mathcal{P}(x)$ is such that it holds for comeager many points, we write $\forall^\ast x \in X : \mathcal{P}(x)$.

A continuum is a nonempty, compact, connected, and metrizable topological space while a Peano continuum is a continuum that is locally connected. An arc in a topological space $X$ is a homeomorphic image of $[0,1] \subseteq \bb{R}$. An arc $A$ is free if its interior is an open set in $X$. It is known that Peano continua are arcwise connected, that is, between every pair of points $x,y$ there exists an arc connecting $x$ and $y$, so we say that $x$ and $y$ are the endpoints of the arc. A Peano continuum $X$ is indecomposable if there are not two proper subcontinua $A,B \subseteq X$ such that $X = A \cup B$. It is hereditarily indecomposable if every subcontinuum is indecomposable. A connected topological space $X$ is unicoherent if whenever $X=A \cup B$ where $A$ and $B$ are connected closed subsets of $X$, it holds that $A \cap B$ is connected. We say that $X$ is hereditarily unicoherent if every closed, connected subspace is unicoherent. We recall the following theorem:

\begin{theo}[Boundary Bumping Theorem - {\cite[p. 73]{nadlercontinuum}}]
Let $X$ be a continuum and let $U$ be a proper, nonempty, open subset of $X$. If $K$ is a component of $\overline{U}$, then $K \cap U^c \ne \emptyset$.
\end{theo}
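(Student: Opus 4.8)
The plan is to argue by contradiction at the level of a single component, reducing everything to connectedness of $X$. Set $H := \overline{U}$, which is a nonempty compact metrizable space, and let $K$ be a component of $H$. Suppose toward a contradiction that $K \cap U^c = \emptyset$, i.e. $K \subseteq U$. Writing $F := H \setminus U = \overline{U} \cap U^c$, this set is closed in $X$, hence compact, and by assumption $K \cap F = \emptyset$. The goal is to manufacture from $K$ a nonempty proper clopen subset of $X$, which cannot exist since $X$ is connected.

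The key tool is the classical fact that in a compact Hausdorff space components and quasi-components coincide. Applying this to $H$, the component $K$ equals the intersection of all relatively clopen subsets of $H$ containing it, say $K = \bigcap_i C_i$. Since $\bigcap_i C_i$ is disjoint from the compact set $F$, the closed sets $C_i \cap F$ have empty total intersection, so by compactness finitely many already intersect trivially; their intersection $C := C_{i_1} \cap \cdots \cap C_{i_n}$ is again clopen in $H$, satisfies $K \subseteq C$, and has $C \cap F = \emptyset$, which means $C \subseteq U$.

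It then remains to upgrade "clopen in $H$" to "clopen in $X$" and conclude. Since $C$ is open in $H$ and $C \subseteq U \subseteq H$ with $U$ open in $X$, one checks $C = C \cap U$ is open in $X$; and since $C$ is closed in $H$ while $H = \overline{U}$ is closed in $X$, the set $C$ is closed in $X$ as well. Thus $C$ is a clopen subset of the connected continuum $X$ that is nonempty (it contains the nonempty component $K$), forcing $C = X$. But $C \subseteq U$, so $X = U$, contradicting that $U$ is proper. Hence $K \cap U^c \neq \emptyset$. I expect the only genuine obstacle to be the components-equal-quasi-components theorem for compact spaces; the finite-intersection extraction of $C$ and the passage from $H$-clopen to $X$-clopen are routine point-set topology, so I would either cite that structural theorem or recall its short proof.
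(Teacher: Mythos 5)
Your argument is correct: the reduction to quasi-components of $\overline{U}$, the finite-intersection extraction of a clopen set $C$ missing $\overline{U}\setminus U$, and the upgrade of $C$ to a clopen subset of $X$ all check out, and this is precisely the classical proof. The paper itself states this theorem without proof, citing Nadler, and your argument is essentially the one given there, so there is nothing to compare beyond noting that you have reproduced the standard reference proof faithfully.
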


A dendrite $X$, is a Peano continuum without simple closed curves, thus it is uniquely arcwise connected. Hence, given $x,y \in X$ we denote the arc connecting $x$ and $y$ by $[x,y]$. The set of all branching points of $X$ is denoted by $\mathcal{B}(X)$. Let $X,Y$ be dendrites with $X \subseteq Y$. Given that dendrites are uniquely arcwise connected, one can define the first point map $r_{Y,X}: Y \to X$ as follows: for each $y \in Y$, $r_{Y,X}(y)$ is the unique point in $X$ that lies in every arc from $y$ to a point of $X$. We say that the order of a point $x$ of a dendrite $X$ is $\kappa=\operatorname{ord}(x,X)$ for some cardinal $\kappa$ if for any open neighborhood $U$ of $x$ there exists an open set $V$ with $V \subseteq U$ having boundary of cardinality smaller than $\kappa$, that is, $|\partial{V}|<\kappa$. If a point $x \in X$ has order one, it is an endpoint, if it is greater than two, it is called a branching point. The set of endpoints is denoted by $\ep(X)$ while the set of branching points is denoted by $\mathcal{B}(X)$. If $Y$ is a subdendrite of $X$ and $y \in Y$, we say that ther order of $y$ in $Y$ is maximal in $X$ if $\operatorname{ord}(y,Y)= \operatorname{ord}(y,X)$. On \cite{charatonikdendrites} one can find precious information about dendrites compiled.

Throughout this text $M$ is a nonempty subset of $\{3,4,5,\ldots,\omega\}$. The generalized Wa\.zewski dendrite $W_M$ is a dendrite whose branching points of order $m$ are arcwise dense  for every $m \in M$ (i.e. the interior of any arc contains densely many branching points of order $m$ for every $m \in M$.)

In a metric space $(X,d)$ we denote by $B_d(x,r)$ (or just $B(x,r)$ when there is only one metric being used) the open ball of radius $r$ centered in $x$. If $(X,d)$ is a metric space and $A,B \subseteq X$ are nonempty compact subsets of $X$, the distance from $A$ to $B$ is given by
$$\overline{d}(A,B) = \max_{a \in A} \min_{b \in B} d(a,b)$$
and the Hausdorff distance between $A$ and $B$ is
$$h(A,B)=\max \{\overline{d}(A,B),\overline{d}(B,A)\}.$$
The Hausdorff distance is a metric on the collection $\operatorname{Comp}(X)$ of all compact nonempty subsets of $X$ and consequently on the collection $\cont(X)$ of all nonempty subcontinua of $X$. A continuum $X$ has the property of Kelley if for every $\varepsilon>0$ there exists $\delta>0$ for which whenever $x,y \in X$ are such that $d(x,y)<\delta$ and $A \in \cont(X)$ contains $x$, it is possible to find $B \in \cont(X)$ containing $y$ with $h(A,B)<\varepsilon$.

Another way to define the same topology as induced by the Hausdorff distance on $\operatorname{Comp}(X)$ is given by the Vietoris topology whose base is the collection
$$\{\lr{U_1,\ldots,U_n}\ | \ U_i \text{ is open in } X \text{ for each } i \text{ and } n \in \omega\}$$
where
$$\lr{U_1,\ldots,U_n}=\{F \in \operatorname{Comp}(X) \ | \ F \subseteq \cup_{i=1}^n U_i \text{ and } U_i \cap F \ne \emptyset \text{ for each } i\}.$$
We say that a property of subsets of a continuum $X$ is generic if the collection of subsets of $X$ satisfying this property is comeager in $\cont(X)$.

Fixed a continuum $X$, an order arc on $\cont(X)$ is a continuum $\mathcal{C} \subseteq \cont(X)$ such that for every pair $K_1,K_2 \in \mathcal{C}$ either $K_1 \subseteq K_2$ or $K_2 \subseteq K_1$. Note that this includes the unitary subsets of $\cont(X)$ and that if $\mathcal{C}$ is nondegenerate, then it is an arc. An order arc $\mathcal{C}$ is called maximal if $X \in \mathcal{C}$ and $\{x\}\in\mathcal{C}$ for some $x \in X$. In this case, $x$ is called the root of $\mathcal{C}$ and it is denoted by $\Root(\mathcal{C})$. It is known that a collection $\mathcal C\subseteq \cont(X)$ is maximal linearly ordered set (ordered by inclusion) if and only if $\mathcal C$ is a maximal order arc (see \cite[p. 111]{nadlerhyperspaces}). The set $\operatorname{OA}(X)=\{\mathcal{C} \subseteq \cont(X) \ | \ \mathcal{C} \text{ is an order arc}\}$ is a compact subspace of $\cont(\cont(X))$ and $\moa(X)=\{\mathcal{C} \in \operatorname{OA}(X) \ | \ \mathcal{C} \text{ is a maximal order arc}\}$ is a closed subset of it, thus a Polish space (see \cite{spacesoffforderarcs}). If $\mathcal{C}$ is an order arc between $K_1$ and $K_2$ in $\cont(X)$, it has a parametrization, one of which we will denote as $[K_1,K_2]:[0,1] \to \cont(X)$ when it is necessary to deal with different arcs at the same time. In order to distinguish the Hausdorff metric on $\cont(X)$ and $\cont(\cont(X))$ we will denote by $h^2$ the Hausdorff metric on $\cont(\cont(X))$ induced by $h$ on $\cont(X)$.

Given a continuum $X$ and $K_1,K_2 \in \cont(X)$, we say that $K_1$ is ambiently homeomorphic to $K_2$ if there exists a homeomorphism $\psi:X \to X$ such that $\psi(K_1)=K_2$. Two chains $\mathcal{C}_1,\mathcal{C}_2 \in \moa(X)$ are ambiently equivalent, if there is an automorphism $\psi$ of $X$ such that the induced homeomorphism $\Psi:\moa(X) \to \moa(X)$ given by
$$\Psi(\mathcal{C}) = \{\psi(K) \in \cont(W_M) \ | \ K \in \mathcal{C}\}$$
satisfies $\Psi(\mathcal{C}_1)=\mathcal{C}_2$.

\section{Generically hereditarily equivalent continua}
\subsection{Generalized Wa\.zewski dendrites}

 We start this section by proving a general property of dendrites.

 \begin{prop}\label{countableend}
  If $Y$ is a dendrite and $X \subseteq Y$ is a subdendrite, then $\ep(X) \setminus \ep(Y)$ is countable.
 \end{prop}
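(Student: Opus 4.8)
The plan is to bound $\ep(X)\setminus\ep(Y)$ by the number of connected components of $Y\setminus X$ and to observe that this number is countable. Indeed, $X$ is a subcontinuum, hence closed in $Y$, so $Y\setminus X$ is open; since $Y$ is a Peano continuum it is locally connected, so every component of $Y\setminus X$ is open. As $Y$ is second countable, a pairwise disjoint family of nonempty open sets is at most countable, so $Y\setminus X$ has only countably many components. (If $X=Y$ the statement is trivial, so I assume $X\subsetneq Y$.) First I would attach to each component a single distinguished point of $X$, and then set up an injection from $\ep(X)\setminus\ep(Y)$ into the set of components.

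The technical heart of the argument is the claim that for every component $C$ of $Y\setminus X$ the set $\overline{C}\cap X$ is a singleton, which I will denote $x_C$. Nonemptiness follows from connectedness of $Y$ (equivalently from the Boundary Bumping Theorem): since $C\cap X=\emptyset$ one has $\overline{C}\cap X=\partial C$, and $\partial C\neq\emptyset$ because $C$ is a proper nonempty open subset of the connected space $Y$. For uniqueness I would use the first point map $r=r_{Y,X}\colon Y\to X$, which is continuous and satisfies $[y,r(y)]\cap X=\{r(y)\}$ for every $y$. The key step is to show $r$ is constant on $C$: if $r(y_a)=a\neq b=r(y_b)$ for some $y_a,y_b\in C$, then $[y_a,a)$ and $[y_b,b)$ lie in $C$, the arc $[a,b]$ lies in $X$ (a subcontinuum of a dendrite is arc-convex), and concatenating $[a,y_a]$, an arc inside $C$ from $y_a$ to $y_b$ (available since $C$ is an open connected subset of a Peano continuum), and $[y_b,b]$ produces a path from $a$ to $b$ whose interior avoids $X$; together with $[b,a]\subseteq X$ this yields two arcs between the distinct points $a,b$ meeting only at their endpoints, i.e.\ a simple closed curve, contradicting that $Y$ is a dendrite. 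Hence $r(C)=\{x_0\}$, and by continuity $r(\overline{C})\subseteq\{x_0\}$; since $r$ fixes the points of $X$, any $z\in\overline{C}\cap X$ satisfies $z=r(z)=x_0$, so $\overline{C}\cap X=\{x_0\}$. I expect this no-loops argument to be the main obstacle, chiefly in keeping the bookkeeping of the concatenated arcs clean.

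With the claim in hand the counting is straightforward. Fix $x\in\ep(X)\setminus\ep(Y)$, so $\ord(x,X)=1$ while $\ord(x,Y)\geq 2$. For dendrites the order of a point equals the number of components of the complement of that point, so $X\setminus\{x\}$ is connected whereas $Y\setminus\{x\}$ has at least two components. The connected set $X\setminus\{x\}$ lies in a single component of $Y\setminus\{x\}$, so there is at least one further component $D$ of $Y\setminus\{x\}$ with $D\cap X=\emptyset$. This $D$ is open and connected, hence contained in a unique component $C_x$ of $Y\setminus X$; moreover $x\in\overline{D}$ (the closure of a component of $Y\setminus\{x\}$ reattaches exactly the point $x$, again by boundary bumping). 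Therefore $x\in\overline{C_x}\cap X=\{x_{C_x}\}$, that is, $x=x_{C_x}$.

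Finally I would note that the assignment $x\mapsto C_x$ is injective: if $C_x=C_{x'}=C$ then $x=x_C=x'$. Consequently $\ep(X)\setminus\ep(Y)$ injects into the countable set of components of $Y\setminus X$, and is therefore countable, as required.
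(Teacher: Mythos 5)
Your proof is correct, and it reaches the conclusion by a genuinely different bookkeeping than the paper's, so a comparison is worth making. The underlying counting mechanism is the same in both arguments: a pairwise disjoint family of nonempty open sets in the separable space $Y$ is countable. The paper sets up the injection directly, assigning to each $x\in\ep(X)\setminus\ep(Y)$ a component $C_x$ of $Y\setminus\{x\}$ that misses $X$ (which exists exactly as in your third paragraph, since $X\setminus\{x\}$ is connected while $Y\setminus\{x\}$ has at least two components) and observing that $C_{x_1}\cap C_{x_2}=\emptyset$ for $x_1\neq x_2$; that is the whole proof. You instead inject $\ep(X)\setminus\ep(Y)$ into the fixed countable family of components of $Y\setminus X$, which obliges you to prove the additional lemma that each such component has singleton closure-trace on $X$, via constancy of the first point map on components and a no-simple-closed-curve argument. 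That lemma is true and your argument for it is sound (it is closely related to Lemma~\ref{nonemptyinterior}, which says the fibers of $r_{Y,X}$ minus their base points are open), but it is dispensable for the statement at hand: once you have the component $D$ of $Y\setminus\{x\}$ with $D\cap X=\emptyset$, disjointness of these sets over distinct $x$ already finishes the count without ever passing to $Y\setminus X$. What your longer route buys is the slightly stronger structural fact that every component of $Y\setminus X$ is attached to $X$ at exactly one point; what the paper's route buys is brevity.
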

 \begin{proof}
  For each $x \in \ep(X) \setminus \ep(Y)$ we know that there exists at least one open connected component $C_x$ of $Y \setminus \{x\}$ that does not meet $X$. Also, if $x_1,x_2 \in \ep(X) \setminus \ep(Y)$ and $x_1 \ne x_2$, then $C_{x_1} \cap C_{x_2} =\emptyset$, so that
  $$\mathcal{X} = \{C_x \subseteq Y \ | \ x \in \ep(X) \setminus \ep(Y)\}$$
  is a collection of disjoint open sets of $Y$. Given that $Y$ is separable, $\mathcal{X}$ is countable and thus $\ep(X) \setminus \ep(Y)$ is countable.
 \end{proof}

 \begin{defi}
Let $X,Y$ be dendrites with $X \subseteq Y$. We say that a branching point $x$ of $X$ is maximal in $Y$ if $r_{Y,X}^{-1}(x)$ is degenerate, that is, if $r_{Y,X}^{-1}(x)=\{x\}.$
\end{defi}

\begin{lem}\label{maximalbranching}
Let $X,Y$ be dendrites with $X \subseteq Y$. A branching point $x$ of $X$ is maximal in $Y$ if and only if one of the following are satisfied:
\begin{enumerate}[label={(\roman*)}]
\item $r_{Y,X}^{-1}(x)$ is degenerate.
\item there is no arc $A \subseteq Y$ from $y \in Y\setminus X$ to $x$ with $A \cap X = \{x\}$.
\item every open neighborhood of $x$ in $X$ meets each component of $Y \setminus \{x\}$.
\end{enumerate}
\end{lem}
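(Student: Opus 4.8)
The statement records three reformulations of the defining condition (i), so the plan is to prove the equivalences (i) $\Leftrightarrow$ (ii) and (i) $\Leftrightarrow$ (iii). The whole argument rests on a single geometric observation, which I would isolate first: the fiber $r_{Y,X}^{-1}(x)$ is nondegenerate if and only if some component of $Y \setminus \{x\}$ is disjoint from $X$. Indeed, if $y \in r_{Y,X}^{-1}(x)$ with $y \neq x$, then necessarily $y \in Y \setminus X$ (the retraction fixes $X$); letting $C$ be the component of $Y\setminus\{x\}$ containing $y$, no point of $C$ can lie in $X$, since such a point $z$ would give an arc $[y,z]\subseteq C$ avoiding $x$, whereas $r_{Y,X}(y)=x$ must lie on every arc from $y$ to a point of $X$. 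Conversely, if $C$ is a component of $Y\setminus\{x\}$ missing $X$, then for any $y\in C$ the arc from $y$ to any point of $X$ must pass through $x$, so $r_{Y,X}(y)=x$ and the fiber is nondegenerate.

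For (i) $\Leftrightarrow$ (ii) I would argue directly through the first point map. If (i) fails, choose $y \in r_{Y,X}^{-1}(x)\setminus\{x\}$; then $y \in Y\setminus X$ and, by the definition of $r_{Y,X}$, the unique arc $A=[y,x]$ meets $X$ only in $x$, which refutes (ii). Conversely, an arc $A$ from some $y \in Y\setminus X$ to $x$ with $A \cap X=\{x\}$ is the unique arc $[y,x]$ and witnesses $r_{Y,X}(y)=x$ with $y\neq x$, so the fiber is nondegenerate and (i) fails. Thus (i) and (ii) are contrapositively equivalent.

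For (i) $\Leftrightarrow$ (iii) I would combine the isolated observation with the remark that a component $C$ of $Y\setminus\{x\}$ meets $X$ if and only if it meets every open neighborhood of $x$ in $X$. The nontrivial implication here is that meeting $X$ forces meeting every such neighborhood: if $z \in C \cap X$, then the arc $[x,z]$ lies in $X$ (as $X$ is a subdendrite) and, after leaving $x$, immediately enters $C$, so every open neighborhood of $x$ in $X$ contains points of $[x,z]\setminus\{x\}\subseteq C\cap X$. Consequently (iii) fails precisely when some component of $Y\setminus\{x\}$ is disjoint from $X$, which by the observation is exactly the negation of (i).

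The only delicate point — and the step I would write most carefully — is the repeated use of unique arcwise connectedness to control where arcs sit relative to the components of $Y\setminus\{x\}$: namely that the arc between two points of a single component $C$ stays inside $C$, and dually that the arc from $x$ to a point of $C$ enters $C$ as soon as it leaves $x$. These facts, standard for dendrites, are what convert the conditions (i) and (iii) into the same statement about components, and they must be invoked without assuming any finiteness on $\ord(x,Y)$, since $Y$ may have points of infinite order.
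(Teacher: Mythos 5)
Your proof is correct and rests on the same ingredients as the paper's: the definition of the first point map, the fact that the unique arc between two points of a component of $Y\setminus\{x\}$ stays in that component, and the fact that an arc in $X$ leaving $x$ into a component $C$ places points of $C\cap X$ in every neighborhood of $x$ in $X$. The only difference is organizational --- the paper runs the cycle (i)$\Rightarrow$(ii)$\Rightarrow$(iii)$\Rightarrow$(i), while you prove (i)$\Leftrightarrow$(ii) and (i)$\Leftrightarrow$(iii) through an extracted observation (the fiber $r_{Y,X}^{-1}(x)$ is nondegenerate if and only if some component of $Y\setminus\{x\}$ misses $X$) --- so the mathematical content is essentially identical.
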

\begin{proof}
(i) $\Rightarrow$ (ii): If $r^{-1}_{Y,X}(x)$ is degenerate, for every $y \in Y \setminus X$ the arc $A=[y,x] \subseteq Y$ meets $X$ in a different point from $X$. Thus, $A \cap X \ne \{x\}$.

(ii) $\Rightarrow$ (iii): We will prove the contrapositive. If there is an open neighborhood $U$ of $x$ in $X$ that does not meet each component of $Y \setminus \{x\}$, we can consider it a connected neighborhood without loss of generality. Hence, there exists a component $C$ of $Y\setminus \{x\}$ for which $C \cap X \cap U = \emptyset$. We claim that $[y,x] \cap X=\{x\}$ for every $y \in C$. Suppose not. Then, there exists $x' \in X \setminus \{x\}$ with $x' \in [y,x]$, but this is a contradiction since the connected component of $x'$ and $y$ are the same in $Y \setminus \{x\}$. Thus $[y,x] \cap X= \{x\}$.

(iii) $\Rightarrow$ (i): If every open neighborhood of $x$ in $X$ meets each component of $Y \setminus \{x\}$, given $y \in Y\setminus \{x\}$ there exists $y'$ in $X \setminus \{x\}$ in the same component of $Y\setminus \{x\}$ than $y$. Thus, $[y',x) \subseteq X$ is in the same component of $y$ and so is $[y,y']$ and $[y,x)$. Note that $x \not\in [y,y']$ since they are in the same component of $Y \setminus \{x\}$. Therefore, $[y,x) \cap X = \emptyset$ implies that $[y,x] \cup [y',y] \cup [y',x]$ is a subcontinuum of $Y$ with a closed arc, a contradiction.
\end{proof}

\begin{defi}
 Let $Y$ be a dendrite. If $X \subseteq Y$ is a dendrite, we say that $X$ is full if for every $b \in \mathcal{B}(Y) \cap X$, it holds that $b \in \mathcal{B}(X)$ and $b$ is maximal in $Y$. We denote
$$\operatorname{Full}(Y)=\left\{K \in \cont(Y) \ \left| \ K \text{ is  full in } Y\right.\right\}.$$
\end{defi}

\begin{prop}{}\label{fullcomeager}
 If $X$ is a dendrite, then $\operatorname{Full}(X)$ is a $G_\delta$ dense subset of $\cont(X)$.
\end{prop}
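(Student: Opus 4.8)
My plan is to first reformulate fullness into a condition that is manifestly a countable intersection of open sets, and then handle density by a direct construction. The reformulation I would aim for is: a subcontinuum $K\in\cont(X)$ is full if and only if for every branching point $b\in\mathcal{B}(X)\cap K$ the set $K$ meets every component of $X\setminus\{b\}$. To see this, note that in a dendrite a component $O$ of $X\setminus K$ satisfies $\overline{O}\cap K=\{p_O\}$ for a single point $p_O$ (otherwise $K$ would contain an arc inside $O$), and $r_{X,K}^{-1}(b)\setminus\{b\}$ is exactly the union of the components attaching at $b$; thus by Lemma \ref{maximalbranching} the point $b$ is maximal iff no such component attaches at $b$, which, since $b\in K$ and $K$ is connected, is equivalent to $K$ meeting every component of $X\setminus\{b\}$. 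Moreover, meeting all $\ord(b,X)\ge 3$ components forces $\ord(b,K)\ge 3$, so the requirement $b\in\mathcal{B}(K)$ in the definition is automatic and need not be tracked separately. Throughout I will use the classical fact that $\mathcal{B}(X)$ is countable.

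For the $G_\delta$ claim, I would enumerate $\mathcal{B}(X)=\{b_1,b_2,\dots\}$ and, for each $n$, enumerate the (at most countably many) components $C_{n,1},C_{n,2},\dots$ of $X\setminus\{b_n\}$; each $C_{n,j}$ is open since $X$ is locally connected. Setting
$$W_{n,j}=\{K\in\cont(X)\ |\ b_n\notin K\}\cup\{K\in\cont(X)\ |\ K\cap C_{n,j}\ne\emptyset\},$$
both pieces are Vietoris-open (the first is $\langle X\setminus\{b_n\}\rangle$, the second is a basic ``intersects'' set), so $W_{n,j}$ is open. By the reformulation above, $\full(X)=\bigcap_{n,j}W_{n,j}$, a countable intersection of open sets, which gives the $G_\delta$ property directly.

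For density, fix $K_0\in\cont(X)$ and $\varepsilon>0$. The components of $X\setminus K_0$ form a null family (a standard property of Peano continua), so all but finitely many have diameter below $\varepsilon$. I would build $K$ by enlarging $K_0$ as follows. For each component $C$ of $X\setminus K_0$ that is an entire component of $X\setminus\{b_C\}$ for its attaching branching point $b_C\in\mathcal{B}(X)\cap K_0$: if $\operatorname{diam}(\overline{C})<\varepsilon$, adjoin all of $\overline{C}$; for the finitely many components with $\operatorname{diam}(\overline{C})\ge\varepsilon$, adjoin instead a small full piece $T_C\subseteq\overline{C}$ with $b_C\in T_C\subseteq B(b_C,\varepsilon)$, obtained by deleting finitely many non-branching points (which exist densely since $\mathcal{B}(X)$ is countable) so as to cut off a small neighborhood of $b_C$ inside $\overline{C}$. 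Setting $K=K_0\cup\bigcup\overline{C}\cup\bigcup T_C$, I would check the four required properties: $K$ is connected because every adjoined piece meets $K_0$ at $b_C$; it is close to $K_0$ because every adjoined point lies within $\varepsilon$ of some $b_C\in K_0$; it is full because every branching point of $X$ in $K_0$ now has all its missing directions met, every branching point interior to a fully adjoined $\overline{C}$ has all its $X$-components inside $K$, and every branching point interior to a $T_C$ is surrounded by $T_C$ (the cut points being non-branching); finally, using the null-family property, any accumulation of the infinitely many adjoined pieces forces the diameters to tend to $0$, so limit points fall back into $K_0$ and $K$ is compact.

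The main obstacle is exactly this last point: because $\mathcal{B}(X)$ can be dense (as in $W_M$), the enlargement adjoins pieces at infinitely many branching points and threatens both compactness and the appearance of new unfilled branching points inside the added arcs. The two devices that resolve this are completely filling the small complementary components (so their interior branching points become automatically maximal and no new gaps are created) and invoking the null-family property to reduce the genuinely delicate additions to the finitely many large components, where the cut-with-non-branching-points construction keeps the added piece small and full away from its attaching point.
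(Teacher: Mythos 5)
Your proposal is correct, and its first half coincides with the paper's argument: the paper works with the complementary closed sets $\scr{M}(b,\ell)=\{K\in\cont(X) : b\in K \text{ and } K\cap C_{b,\ell}=\emptyset\}$, which are exactly the complements of your $W_{n,j}$, and your reformulation of fullness via Lemma \ref{maximalbranching} (including the observation that $b\in\mathcal{B}(K)$ comes for free once $K$ meets at least three components of $X\setminus\{b\}$) is the same equivalence the paper relies on. Where you genuinely diverge is the density half. The paper never constructs a full continuum near a given one: it only checks that each single $\scr{M}(b,\ell)$ is nowhere dense --- which takes one line, namely attach a small connected neighborhood of $b$ to $K$ so that it enters the missing component --- and then invokes the Baire category theorem in the completely metrizable space $\cont(X)$ to conclude that the countable intersection of the dense open sets $W_{n,j}$ is dense. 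You instead prove density directly by a global surgery on $K_0$ repairing all branching points at once; this is legitimate and even yields a stronger, explicit statement, but it forces you to handle compactness of an infinite union and the new branching points created inside the adjoined pieces, none of which is needed once the $G_\delta$ decomposition is in hand. Two small corrections to that construction: the null-family property you invoke is \emph{not} a property of Peano continua in general (the complement of a comb-shaped subcontinuum of the square has infinitely many components of diameter one); it holds for dendrites, which is the form you need and should cite or prove. Also, your intersection $\bigcap_{n,j}W_{n,j}$ contains the singletons $\{x\}$ with $x\notin\mathcal{B}(X)$, which the paper excludes from $\full(X)$ by additionally removing the closed nowhere dense set $\operatorname{Fin}_1(X)$; this does not affect the conclusion, but you should reconcile it with the convention that full subcontinua are nondegenerate.
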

\begin{proof}
 For every $b \in \mathcal{B}(X)$, enumerate the connected components of $X \setminus \{b\}$. Then, let
$$\scr{M}(b,\ell) = \{K \in \cont(X) \ | \ b \in K \text{ and } K \text{ does not meet the $\ell$-th component of } X \setminus\{b\}\}.$$
Each $\scr{M}(b,\ell)$ is closed since if $K \not\in \scr{M}(b,\ell)$, then either $K$ does not contain $b$ or $K$ contains $b$ and meets the $\ell$-th component. In the first case $\lr{X,\{b\}^c}$ is an open set disjoint from $\scr{M}(b,\ell)$ that contains $K$. In the second case, let the open set $U$ be the component of $X \setminus \{b\}$ containing the $\ell$-th component of $K \setminus \{b\}$ so that $K \in \lr{X,U}$ and $\lr{X,U} \cap \scr{M}(b,\ell)=\emptyset$. Also, we claim that the sets $\scr{M}(b,\ell)$ have an empty interior. Given $K \in \scr{M}(b,\ell)$ and $\varepsilon>0$, there exists a connected open neighborhood $V$ of $b$ contained in $B(b,\varepsilon/2)$. Hence, $K' = K \cup \overline{V} \in \cont(X) \setminus \scr{M}(b,\ell)$ is at distance smaller than $\varepsilon$ from $K$ and $\scr{M}(n,\ell)$ has an empty interior. Therefore,
\begin{align*}
\scr{G}'&= \cont(X) \setminus\left( \left( \bigcup_{b \in \scr{B}(X)} \bigcup_{\ell} \scr{M}(b,\ell) \right) \cup \operatorname{Fin}_1(X) \right)
\end{align*}
is $G_\delta$ dense, hence comeager in $\cont(X)$.

Finally, we claim that $\operatorname{Full}(X)=\scr{G}'$. If $K \in \scr{M}(b,\ell)$, then $K \not\in \operatorname{Full}(X)$, since $b$ is either a branching point that is not maximal in $X$, because $K$ does not meet the $\ell$-th component of $X \setminus K$, or it may not be even a branching point of $K$. Thus, $\operatorname{Full}(X) \subseteq \scr{G}'$. On the other hand, if $K \in \operatorname{Full}(X),$ it is nondegenerate and not in any $\scr{M}(b,\ell)$ since by definition $b \in \mathcal{B}(K)$ and $b$ is maximal in $X$ whenever $b \in K$ hence $K$ meets every component of $X setminus \{b\}$. This means $\scr{G}' \subseteq \operatorname{Full}(X)$ and $\operatorname{Full}(X)=\scr{G}'$ is comeager in $\cont(X)$.
\end{proof}

\begin{theo}\label{generalizeddendrite}
For every $K \in \full(W_M)$ it holds that $K \simeq W_M$ and $\ep(K) \cap \mathcal{B}(W_M)=\emptyset$.
\end{theo}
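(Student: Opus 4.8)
The plan is to derive both assertions from the well-known topological characterization of the generalized Wa\.zewski dendrite: a dendrite $D$ is homeomorphic to $W_M$ if and only if every branching point of $D$ has order lying in $M$ and, for each $m \in M$, the branching points of order $m$ are arcwise dense in $D$. Since a subcontinuum of a dendrite is again a dendrite, $K$ is a nondegenerate subdendrite of $W_M$, so it suffices to transfer these two order-theoretic conditions from $W_M$ to $K$. The crucial point is to identify exactly which points of $K$ are its branching points and to compute their orders; once this is done, both the homeomorphism and the statement about endpoints fall out immediately.

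First I would prove the order-matching statement: $\mathcal{B}(K) = \mathcal{B}(W_M) \cap K$, with $\ord(b, K) = \ord(b, W_M)$ for every such $b$. Fix $b \in K$ and consider the map sending a component of $K \setminus \{b\}$ to the unique component of $W_M \setminus \{b\}$ containing it. This map is injective: if $x, y \in K \setminus \{b\}$ lie in the same component of $W_M \setminus \{b\}$, then the arc $[x,y]$ in $W_M$ avoids $b$, and by unique arcwise connectedness this arc is contained in $K$, so $x$ and $y$ lie in the same component of $K \setminus \{b\}$. In particular $\ord(b, K) \le \ord(b, W_M)$, whence a branching point of $K$ is automatically a branching point of $W_M$, giving $\mathcal{B}(K) \subseteq \mathcal{B}(W_M) \cap K$. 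For the reverse inclusion and the equality of orders, take $b \in \mathcal{B}(W_M) \cap K$; fullness forces $b \in \mathcal{B}(K)$ and forces $b$ to be maximal in $W_M$, so Lemma \ref{maximalbranching}(iii) guarantees that every open neighborhood of $b$ in $K$ meets each component of $W_M \setminus \{b\}$, and in particular each such component contains a point of $K$ distinct from $b$. Hence the map above is also surjective, so it is a bijection and $\ord(b, K) = \ord(b, W_M)$.

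With the order-matching in hand I would conclude as follows. Every branching point of $K$ equals a branching point of $W_M$ of the same order, and that order lies in $M$; this verifies the first condition of the characterization. For the second, given an arc $[p,q] \subseteq K$ and $m \in M$, arcwise density of the order-$m$ branching points of $W_M$ produces densely many such points in the interior of $[p,q]$; each of them lies in $K$, is maximal by fullness, and therefore is a branching point of $K$ of order $m$ by the previous paragraph, so the order-$m$ branching points are arcwise dense in $K$ as well. The characterization then yields $K \simeq W_M$. Finally, the endpoint statement is immediate: if some $e \in \ep(K)$ were a branching point of $W_M$, then $e \in \mathcal{B}(W_M) \cap K$, and fullness would force $e \in \mathcal{B}(K)$, contradicting $\ord(e, K) = 1$. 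I expect the main obstacle to be the order-matching step, in particular pinning down the bijection between local components at a maximal branching point and correctly handling the coincidence of orders in the case of order $\omega$, together with citing the precise form of the characterization theorem for $W_M$.
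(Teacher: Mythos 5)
Your proposal is correct and follows essentially the same route as the paper: use fullness to identify $\mathcal{B}(W_M)\cap K$ with $\mathcal{B}(K)$ with matching orders, then invoke the arcwise-density characterization of $W_M$, with the endpoint claim falling out since fullness forces any branching point of $W_M$ in $K$ to be a branching point of $K$. The paper states the order-matching step as immediate from the definition of $\full(W_M)$, whereas you supply the component-counting argument via Lemma \ref{maximalbranching}(iii); this is a welcome elaboration rather than a different proof.
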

\begin{proof}
Every $K \in \operatorname{Full}(W_M)$ is a non-degenerate subdendrite of $W_M$, thus if an arc is contained in $K$, it contains densely many branching points of every order as a subset of $W_M$, and by defintion of $\operatorname{Full}(W_M)$ each of these branching points are branching points of $K$ of the same order, hence $K \simeq W_M$. Moreover, if $K \in \operatorname{Full}(W_M)$, no branching point of $W_M$ can be an endpoint of $K$ since it has to be maximal.
\end{proof}

\begin{cor}{}\label{WMGHEC}
    $W_M$ is GHEC.
\end{cor}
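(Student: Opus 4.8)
The plan is to combine the two results immediately preceding this corollary, since together they already supply everything that is needed. By definition, $W_M$ is GHEC precisely when the set
$$\mathcal{A} = \{K \in \cont(W_M) \mid K \simeq W_M\}$$
is comeager in $\cont(W_M)$. So I would establish the inclusion $\full(W_M) \subseteq \mathcal{A}$ and then transfer comeagerness from $\full(W_M)$ to $\mathcal{A}$.

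First I would recall from Theorem \ref{generalizeddendrite} that every $K \in \full(W_M)$ satisfies $K \simeq W_M$; this is exactly the assertion $\full(W_M) \subseteq \mathcal{A}$ (the additional conclusion $\ep(K) \cap \mathcal{B}(W_M) = \emptyset$ from that theorem is not needed here). Second, Proposition \ref{fullcomeager}, applied with the dendrite $X = W_M$, tells us that $\full(W_M)$ is a $G_\delta$ dense, and hence comeager, subset of $\cont(W_M)$. Since every superset of a comeager set is comeager, it follows that $\mathcal{A} \supseteq \full(W_M)$ is comeager, which is exactly the GHEC condition.

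There is no genuine obstacle at this stage: the corollary is a purely formal consequence of the preceding proposition and theorem, and all the real content — the construction of the comeager family $\full(W_M)$ via the closed nowhere-dense sets $\scr{M}(b,\ell)$, together with the verification that each of its members is homeomorphic to $W_M$ through its arcwise dense branching points of maximal order — has already been carried out. The single line worth stating carefully is the passage from comeager subset to comeager superset, which is immediate from the definition of meagerness: the complement of $\mathcal{A}$ is contained in the complement of $\full(W_M)$, and a subset of a meager set is meager, so $\mathcal{A}^c$ is meager and $\mathcal{A}$ is comeager.
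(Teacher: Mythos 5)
Your proposal is correct and follows exactly the paper's own argument: combine Proposition \ref{fullcomeager} (comeagerness of $\full(W_M)$) with Theorem \ref{generalizeddendrite} (every element of $\full(W_M)$ is a copy of $W_M$) and pass to the superset. The paper states this in two lines; your additional remark that a superset of a comeager set is comeager is the only detail the paper leaves implicit.
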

\begin{proof}
    We know from Proposition \ref{fullcomeager} that $\operatorname{Full}(W_M)$ is a comeager subset of $\cont(W_M)$. Hence $W_M$ is GHEC by Theorem \ref{generalizeddendrite}.
\end{proof}

 \begin{defi}{}{}
A continuum $X$ is strongly GHEC if
$$\{K \in \cont(X) \ | \ K \simeq X\}$$
contains a comeager subset $\scr{H}$ of $\cont(X)$ such that whenever $K_1,K_2 \in \scr{H}$ there exists a homeomorphism $\psi:X \to X$ such that $\psi(K_1)=K_2$.
\end{defi}

We will prove that $W_M$ is strongly GHEC. First, we need to introduce a new concept:

\begin{defi}
 Given three points $x,y,$ and $z$ of a dendrite $X$, we say that $z$ is \textbf{between} $x$ and $y$ if $z \in [x,y]$. If $X$ and $Y$ are dendrites and $f:X \to Y$ is a function, we say that $f$ preserves betweenness on $A \subseteq X$ if for every $\{x,y,z\} \subseteq A$ with $z$ between $x$ and $y$ it holds that $f(z)$ is between $f(x)$ and $f(y)$.
\end{defi}

\begin{prop}{\cite[p. 237]{duchesnekaleidoscopic}}\label{duchesnehomeo}
Let $X,Y$ be dendrites without a free arc. If $f: \mathcal{B}(X) \to \mathcal{B}(Y)$ is a bijection that preserves the betweenness relation, then there is a homeomorphism $h:X \to Y$ extending $g$.
\end{prop}

The proof of the following lemma is left to the reader.

\begin{lem}\label{nonemptyinterior}
Let $X$ and $Y$ be two dendrites with $X \subseteq Y$. If $x \in X$, then $r_{Y,X}^{-1}(x) \setminus \{x\}$ is open in $Y$.
\end{lem}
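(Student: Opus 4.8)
The plan is to give an explicit description of the fiber as a union of connected components of $Y \setminus \{x\}$ and then invoke local connectedness. First I would observe that $r_{Y,X}^{-1}(x) \setminus \{x\}$ is disjoint from $X$: if $y \in X$ then $r_{Y,X}(y) = y$ by definition of the first point map, so the only point of $X$ in the fiber over $x$ is $x$ itself. Consequently every point of $r_{Y,X}^{-1}(x) \setminus \{x\}$ lies in some component of the open set $Y \setminus \{x\}$.

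The heart of the argument is the claim
$$r_{Y,X}^{-1}(x) \setminus \{x\} = \bigcup \{C \ | \ C \text{ is a component of } Y \setminus \{x\} \text{ with } C \cap X = \emptyset\}.$$
For the inclusion $\supseteq$, take a component $C$ of $Y \setminus \{x\}$ with $C \cap X = \emptyset$ and $y \in C$. Any arc in $Y$ from $y$ to a point of $X$ must leave $C$, and since $C$ is a component of $Y \setminus \{x\}$ it can only do so through $x$; as $C$ meets no point of $X$, the first point of $X$ encountered along such an arc is exactly $x$, so $r_{Y,X}(y) = x$. For the inclusion $\subseteq$, take $y \in r_{Y,X}^{-1}(x) \setminus \{x\}$ and let $C$ be the component of $Y \setminus \{x\}$ containing $y$. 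If some $x' \in C \cap X$ existed, then by unique arcwise connectedness the arc $[y,x']$ would lie entirely in $C$ and hence avoid $x$; but then the first point of $X$ along the arc from $y$ would be a point of $[y,x'] \cap X$ different from $x$, contradicting $r_{Y,X}(y) = x$. Hence $C \cap X = \emptyset$.

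Finally I would conclude by openness: since $Y$ is a Peano continuum it is locally connected, so every component of the open set $Y \setminus \{x\}$ is itself open in $Y$. Being a union of such components, $r_{Y,X}^{-1}(x) \setminus \{x\}$ is open. The only delicate point -- the main obstacle -- is making the two arc arguments in the displayed claim rigorous, namely justifying that an arc from $y$ to $X$ must pass through $x$; but this follows directly from the fact that $x$ is the unique point separating $C$ from the rest of $Y$, together with the unique arcwise connectedness of the dendrite $Y$.
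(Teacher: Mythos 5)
Your proof is correct. The paper's own argument (omitted from the printed text) is local: for each $y$ in the fiber it picks a connected open neighborhood $U$ of $y$ disjoint from $X$ (possible since $y\notin X$, $X$ is closed in $Y$, and $Y$ is locally connected) and shows $U\subseteq r_{Y,X}^{-1}(x)$ by an arc argument. You instead give a global structural description, identifying $r_{Y,X}^{-1}(x)\setminus\{x\}$ with the union of those components of $Y\setminus\{x\}$ that miss $X$, and then invoke local connectedness once to conclude that each such component is open. Both routes rest on the same two facts (local connectedness and unique arcwise connectedness of $Y$), but yours buys a little more: the explicit description of the fiber dovetails with condition (iii) of Lemma \ref{maximalbranching} and makes it transparent exactly when the fiber is nondegenerate. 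Two spots are worth a line of tightening, though neither is a gap: in the inclusion $\subseteq$, the claim that $[y,x']\subseteq C$ uses that $C$, being an open connected subset of a Peano continuum, is arcwise connected, so the arc it contains from $y$ to $x'$ is, by uniqueness, $[y,x']$ itself; and in the inclusion $\supseteq$, to match the paper's definition of the first point map one should observe that $[y,x)$ is a connected subset of $Y\setminus\{x\}$ containing $y$, hence lies in $C$ and misses $X$, so that $x$ is indeed the unique point of $X$ lying on every arc from $y$ into $X$.
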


\begin{prop}\label{WMnowheredense}
  The collection of nowhere dense subcontinua of $W_M$ is a $G_\delta$ dense subset of $\cont(W_M)$.
\end{prop}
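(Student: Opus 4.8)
The plan is to split the statement into two independent claims: that the family of nowhere dense subcontinua is $G_\delta$, and that it is dense in $\cont(W_M)$.

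For the $G_\delta$ part, I would first note that since every $K \in \cont(W_M)$ is closed, being nowhere dense is the same as having empty interior. Fix a countable base $\{B_n\}_{n \in \bb{N}}$ of nonempty open subsets of $W_M$. Then $\operatorname{int}(K) \neq \emptyset$ if and only if $B_n \subseteq K$ for some $n$, and because $K$ is closed this is equivalent to $\ol{B_n} \subseteq K$. The key observation is that for a fixed compact set $C$ the condition $C \subseteq K$ is closed in the Hausdorff metric: if $K_j \to K$ and $C \subseteq K_j$ for all $j$, then for each $c \in C$ we have $\min_{k \in K} d(c,k) \le \ol{d}(K_j, K) \le h(K_j, K) \to 0$, so $C \subseteq K$. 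Hence each set $\{K : \ol{B_n} \subseteq K\}$ is closed, the continua with nonempty interior form an $F_\sigma$ set, and its complement, the collection of nowhere dense subcontinua, is $G_\delta$.

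For density, fix $K_0 \in \cont(W_M)$ and $\varepsilon > 0$. Since $K_0$ is a subdendrite, it is uniquely arcwise connected, so for any finite set $F \subseteq K_0$ the union of the arcs joining its points is a finite tree $T \subseteq K_0$. Choosing $F$ to be an $\varepsilon$-net of $K_0$ yields $\ol{d}(K_0, T) < \varepsilon$, while $T \subseteq K_0$ gives $\ol{d}(T, K_0) = 0$, so $h(K_0, T) < \varepsilon$. It then remains to check that such a finite tree $T$ is nowhere dense, which I regard as the crux of the argument.

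The nowhere density of $T$ is where the structure of $W_M$ enters. Suppose $T$ had nonempty interior, say some nonempty open $U \subseteq T$. A finite tree has only finitely many branching points, so every point of $U$ outside this finite set has order at most $2$ in $T$; since $U$ is open in $W_M$ and contained in $T$, a sufficiently small neighborhood of such a point in $W_M$ coincides with a neighborhood in $T$, forcing its order in $W_M$ to be at most $2$. On the other hand, the branching points of $W_M$ are dense, since any nonempty open set contains a nondegenerate arc in which branching points of each order $m \in M$ lie densely. Thus $U$ would contain infinitely many points of order at least $3$ in $W_M$, contradicting that all but finitely many points of $U$ have order $2$. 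Therefore $T$ is nowhere dense, which completes the density argument and, together with the first part, establishes that the nowhere dense subcontinua form a $G_\delta$ dense subset of $\cont(W_M)$.
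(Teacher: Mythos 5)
Your proof is correct, but it takes a genuinely different route from the paper's, chiefly in the density half. The paper handles everything in one stroke: it writes the complement (the subcontinua with nonempty interior) as $\bigcup_{b,n}\mathscr{R}(b,n)$ with $\mathscr{R}(b,n)=\{K\in\cont(W_M) : B(b,1/n)\subseteq K\}$, indexed by the countable dense set of branching points, and shows each $\mathscr{R}(b,n)$ is closed \emph{and has empty interior} --- the latter by excising a small open set with one-point boundary around an endpoint of $W_M$ lying inside $B(b,1/n)$ --- so that density of the nowhere dense subcontinua falls out of the Baire category theorem. Your $G_\delta$ argument is essentially the same device with the closures $\ol{B_n}$ of a countable base in place of balls around branching points; it is if anything more general, since it uses nothing specific to $W_M$. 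Where you genuinely depart is density: rather than showing each closed piece of the $F_\sigma$ is nowhere dense in $\cont(W_M)$, you exhibit explicit nowhere dense approximants, namely finite trees spanned by $\varepsilon$-nets inside $K_0$, and verify that such a tree is nowhere dense in $W_M$ because a nonempty open subset of $W_M$ contains an arc with densely many branching points while a finite tree has only finitely many. This is constructive where the paper is indirect, and it essentially re-derives in the dendrite setting the density of finite trees that the paper establishes separately in Theorem \ref{peanoimpliestreegeneric}; the paper's route, on the other hand, records the slightly finer information that the complement is a countable union of closed sets each of empty interior, which is the form reused later (e.g.\ in the order-arc arguments). Both proofs ultimately lean on the same structural fact --- arcwise density of the branching points of $W_M$ --- just deployed at different places.
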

\begin{proof}

 Given that branching points of $W_M$ are dense, we can say that $K \in \cont(W_M)$ is nowhere dense if $B(b,1/n) \not\subseteq K$ for every $b \in \mathcal{B}(W_M)$ and $n \in \mathbb{N}$. Thus, we define
 $$\mathscr{R}(b,n) = \{K \in \cont(W_M)\ | \ B(b,n) \subseteq K\}.$$
 We claim that $\mathscr{R}(b,n)$ is closed and has an empty interior, so that
 $$\cont(W_M) \setminus \left(\bigcup_{b \in \mathcal{B}(W_M)} \bigcup_{n \in \mathbb{N}} \scr{R}(b,n) \right)$$
 will be the comeager set made of nowhere dense subcontinua of $W_M$.

 Fixed $b \in \mathcal{B}(W_M)$ and $n \in \mathbb{N}$, $\scr{R}(b,n)$ is closed since for any sequence $(K_m) \subseteq \scr{R}(b,n)$ that converges to some set, say $K'$, it holds that $\overline{B(b,n)} \subseteq K_m$ for every $m \in \mathbb{N}$ and
 $$h(K',K_m) = h(K' \cup \overline{B(b,1/n)},K_m),$$
 thus $\overline{B(b,1/n)} \subseteq K'$ and $K' \in \scr{R}(b,n)$. Now, given $K \in \scr{R}(b,n)$, we can find $y \in B(b,1/n) \cap \ep(W_M)$ and for $\varepsilon>0$ an open neighborhood $V \subseteq B(y,\varepsilon/2) \cap B(b,1/n)$ of $y$ with $|\partial V|=1$ so that $W_M \setminus V$ is connected. Thus, $K' = K \cap W_M \setminus V$ is a continuum and $h(K,K')<\varepsilon$ by definition, hence $\scr{R}(b,n)$ has an empty interior.
\end{proof}

\begin{prop}{}{}
 Comeager many subcontinua of $\cont(W_M)$ contain an endpoint of $W_M$.
\end{prop}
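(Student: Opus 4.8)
The statement to prove is that the set $\mathcal{E}=\{K\in\cont(W_M)\mid K\cap\ep(W_M)\neq\emptyset\}$ is comeager. The plan is to reduce the condition to the endpoints of $K$ themselves, dispatch density by a short-arc argument, and then obtain comeagerness via a Banach--Mazur game, since---as explained below---$\mathcal{E}$ is not visibly $G_\delta$. First I would note that if $e\in K$ is an endpoint of $W_M$ then $1\le\ord(e,K)\le\ord(e,W_M)=1$, so $e$ is also an endpoint of $K$; hence $K\in\mathcal{E}$ if and only if $\ep(K)\cap\ep(W_M)\neq\emptyset$. Density of $\mathcal{E}$ is then immediate: given a basic open set $\lr{U_1,\dots,U_n}$ containing some $K_0$, pick $p\in K_0\cap U_1$ and a connected open $W$ with $p\in W\subseteq U_1$; since connected subsets of a dendrite are convex (the arc between any two of their points lies inside) and the endpoints of $W_M$ are dense, there is an endpoint $e\in W$ with $[p,e]\subseteq W$. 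Then $K_0\cup[p,e]$ lies in $\lr{U_1,\dots,U_n}$ and meets $\ep(W_M)$, so $\mathcal{E}$ is dense.

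The obstacle is that ``$K$ meets the dense $G_\delta$ set $\ep(W_M)$'' need not be a $G_\delta$ condition on $K$. Naive attempts fail: requiring $K$ to meet smaller and smaller ``end-regions'' only forces $K$ to come close to endpoints, and by compactness one extracts a point of $K$ that is a limit of such regions but need not itself be an endpoint (it could be, say, a branching point, around which endpoints accumulate). What is missing is that the witnessing regions be nested along a single branch, so that their intersection is a genuine endpoint lying in $K$; and ``there exists such a nested branch'' is an existential (analytic) condition, not manifestly $G_\delta$.

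To force the nesting I would invoke the Banach--Mazur characterization of comeagerness (Oxtoby) and exhibit a winning strategy for the second player in the game on $\cont(W_M)$ whose winning condition is $\bigcap_j U_j\subseteq\mathcal{E}$. The needed countable supply of end-regions is $\{C : C \text{ is a component of } W_M\setminus\{b\},\ b\in\mathcal{B}(W_M)\}$, which is countable because $\mathcal{B}(W_M)$ is countable and each branching point has at most countably many components; each such $C$ has one-point boundary $\{b(C)\}$, and every endpoint of $W_M$ is the intersection of a nested sequence of such regions with diameters tending to $0$. Player II maintains a nested sequence with $\ol{C_{k+1}}\subseteq C_k$ and $\operatorname{diam}C_k\to 0$: at her $k$-th move, facing a nonempty open $U$, she picks $K\in U$, a ball $B_h(K,\rho)\subseteq U$, and a region $C_k\subseteq C_{k-1}$ of diameter $<\min(1/k,\rho/2)$ situated within $\rho/2$ of a point of $K$; extending $K$ by a short arc running inside $C_{k-1}$ into $C_k$ shows $U\cap\lr{W_M,C_k}\neq\emptyset$, so the move is legal, and she simultaneously shrinks the Vietoris diameter below $1/k$. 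Consequently $\bigcap_j U_j$ is a single continuum $K_\infty$ that meets every $\ol{C_k}$, whence $K_\infty$ contains the endpoint $e=\bigcap_k\ol{C_k}\in\ep(W_M)$. Thus $\bigcap_j U_j\subseteq\mathcal{E}$, the second player wins, and $\mathcal{E}$ is comeager.

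The technical heart, and the main point requiring care, is the legality of player II's moves, i.e.\ the density \emph{inside the current neighborhood} of continua reaching into the next nested region. This rests on the convexity of the small connected sets $C_{k-1}$ (so the connecting arc stays inside) together with the fact that a short extension perturbs $K$ by less than $\rho$ in the Hausdorff metric; everything else is the routine bookkeeping of driving both the region diameters and the Vietoris diameter to zero.
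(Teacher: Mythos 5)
Your proof is correct, but it takes a genuinely different route from the paper's. The paper deduces the statement in two lines from results it has already established: the set $\operatorname{Full}(W_M)$ is comeager (Proposition \ref{fullcomeager}) and consists of homeomorphic copies of $W_M$ (Theorem \ref{generalizeddendrite}); each such copy has uncountably many endpoints, of which only countably many can fail to be endpoints of $W_M$ (Proposition \ref{countableend}); hence every member of a comeager family meets $\ep(W_M)$. In particular, the issue you worry about --- that $\{K : K\cap\ep(W_M)\neq\emptyset\}$ is not visibly $G_\delta$ --- never arises for the paper, since a set containing a comeager set is comeager regardless of its descriptive complexity. Your Banach--Mazur strategy, by contrast, is self-contained: Player II steers the play into nested connected open regions $C_k$ with one-point boundary, $\ol{C_{k+1}}\subseteq C_k$ and $\operatorname{diam}C_k\to 0$, so that any $K_\infty$ in the intersection of the play contains the point $\bigcap_k\ol{C_k}$, which is an endpoint because the $C_k$ form a neighborhood basis witnessing order one. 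This is valid (one small point you gloss over: a branching point $b$ chosen close to an endpoint $e$ on the arc $[q,e]$ does not by itself make the component of $W_M\setminus\{b\}$ containing $e$ small, since large branches may hang off $(b,e)$; you must first pass to a small connected open $V\ni e$ with $\partial V=\{z\}$ and then take $b\in(z,e)$, after which that component is trapped inside $V$ --- routine, but worth saying). What your approach buys is independence from the GHEC machinery: it works for any dendrite with dense endpoints and arcwise dense branching points, whereas the paper's argument leans on $\operatorname{Full}(W_M)$ consisting of copies of $W_M$. What it costs is length and the invocation of the game characterization where a direct inclusion into a known comeager set suffices.
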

\begin{proof}

From Theorem \ref{generalizeddendrite} we have that the collection of homeomorphic copies of $W_M$ is a comeager subset of $\cont(W_M)$. Given that a copy $K$ of $W_M$ has uncountably many endpoints and that only countably many endpoints of $K$ may lay outside of $\ep(W_M)$ by Proposition \ref{countableend}, comeager many subcontinua of $W_M$ meets $\ep(W_M)$.
\end{proof}

\begin{theo}{}\label{stronglyWM}
 $W_M$ is strongly GHEC. Precisely, the collection of subcontinua $K$ of $W_M$ satisfying
 \begin{enumerate}[label={(\roman*)}]
     \item $K \in \operatorname{Full}(W_M)$.
     \item $K$ is nowhere dense in $W_M$.
 \end{enumerate}
 is $G_\delta$ dense and for any pair $K_1,K_2$ in this collection there exists a homeomorphism $\psi:W_M \to W_M$ with $\psi(K_1)=K_2$.
\end{theo}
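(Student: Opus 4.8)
The plan is to prove the two assertions separately. For the $G_\delta$ dense claim there is essentially nothing to do beyond bookkeeping: $\cont(W_M)$ is a compact metric space, hence a Baire space, and the family described by (i)--(ii) is exactly $\full(W_M)$ intersected with the collection of nowhere dense subcontinua. By Proposition \ref{fullcomeager} the former is $G_\delta$ dense and by Proposition \ref{WMnowheredense} the latter is $G_\delta$ dense, so their intersection is again $G_\delta$ and, by the Baire Category Theorem, dense.

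For the ambient homeomorphism I would push everything down to the branching points and invoke Proposition \ref{duchesnehomeo}. First I would record the structural consequences of the hypotheses. Since $K_i\subseteq W_M$ is a subdendrite, the arc in $W_M$ between two points of $K_i$ stays in $K_i$, so $\ord(b,K_i)\le\ord(b,W_M)$ for $b\in K_i$ and $\mathcal{B}(K_i)\subseteq\mathcal{B}(W_M)$. Fullness upgrades this to $\mathcal{B}(K_i)=\mathcal{B}(W_M)\cap K_i$ with $\ord(b,K_i)=\ord(b,W_M)$ at each such point, so $\mathcal{B}(K_i)$ is closed under betweenness in $\mathcal{B}(W_M)$ and attains the full $W_M$-order; moreover $K_i\simeq W_M$ by Theorem \ref{generalizeddendrite}, so branching points of every order $m\in M$ are arcwise dense in $K_i$. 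I would then analyse $W_M\setminus K_i$ via the first point map $r=r_{W_M,K_i}$: one checks that a nondegenerate fibre $r^{-1}(x)$ occurs only when $x$ is an endpoint of $K_i$ of order $2$ in $W_M$ (by Proposition \ref{countableend} and Theorem \ref{generalizeddendrite} these are countably many and disjoint from $\mathcal{B}(W_M)$), and that for such $x$ the closure $T_x=\ol{r^{-1}(x)}$ is a subdendrite meeting $K_i$ only at $x$ all of whose branching points have full order, whence $T_x\simeq W_M$ with $x$ an endpoint. Finally hypothesis (ii) enters: because $K_i$ is nowhere dense, $W_M\setminus K_i$ is dense, and a short first-point-map argument then shows that the attaching points $x$ are dense in $K_i$.

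With this picture the goal becomes the construction of a bijection $f:\mathcal{B}(W_M)\to\mathcal{B}(W_M)$ preserving betweenness and satisfying $f(\mathcal{B}(K_1))=\mathcal{B}(K_2)$. Granting such an $f$, Proposition \ref{duchesnehomeo} (applicable since $W_M$ has no free arc) produces a homeomorphism $\psi:W_M\to W_M$ extending $f$; as $\mathcal{B}(K_1)$ is dense in $K_1$ and $\mathcal{B}(K_2)$ in $K_2$, taking closures gives $\psi(K_1)=\ol{\psi(\mathcal{B}(K_1))}=\ol{\mathcal{B}(K_2)}=K_2$, as required. To build $f$ I would run a back-and-forth over a fixed enumeration of the countable set $\mathcal{B}(W_M)$, maintaining a finite partial map that preserves betweenness and, for feasibility of the extension step, order, and that respects the marking ``lies in $K_1$'' versus ``lies in $K_2$''.

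The main obstacle is the extension step of this back-and-forth, which splits according to the marking of the incoming point $a^\ast$. If $a^\ast\in K_1$, its image must be placed inside $K_2$ in the position prescribed by betweenness relative to the current finite set and with the correct order $m\in M$; this is possible exactly because branching points of order $m$ are arcwise dense in $K_2$ and $\mathcal{B}(K_2)$ is closed under betweenness of full order. If $a^\ast\notin K_1$, then $a^\ast$ lies in a hanging tree $T_x$, and its position must be reproduced inside a hanging tree of $K_2$; here one uses that the attaching points are dense in $K_2$, that each hanging tree is a copy of $W_M$, and that an order-$2$ attaching point is available as a limit of branching points of $K_2$ in the prescribed location. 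The delicate bookkeeping is to keep the betweenness and order constraints mutually consistent across both markings at once --- ensuring that a point sitting ``between'' $K$-points relative to the finite data is itself sent into $K$, while a point hanging off an attaching locus is sent to a genuinely hanging tree --- and everything else is routine given the structural analysis above.
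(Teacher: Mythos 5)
Your proposal is correct and follows essentially the same route as the paper: the $G_\delta$ dense part is obtained by intersecting the two comeager families from Propositions \ref{fullcomeager} and \ref{WMnowheredense}, and the ambient homeomorphism is built by a back-and-forth on $\mathcal{B}(W_M)$ preserving betweenness, order, and membership in $K_1$ versus $K_2$, then extended via Proposition \ref{duchesnehomeo} and transferred to $K_1,K_2$ by taking closures of the dense sets of branching points. Your auxiliary analysis of the fibres of the first point map (the ``hanging trees'') is extra scaffolding the paper does not need --- it instead places images outside $K_2$ directly by using that $K_2$ is closed and nowhere dense --- but it leads to the same extension step.
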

\begin{proof}

The first part of the theorem follows from Corollary \ref{WMGHEC} and Proposition \ref{WMnowheredense}. Thus, we proceed to show that the desired homeomorphism exists.

 Let $K_1$ and $K_2$ be subcontinua of $W_M$ satisfying (i) and (ii). First, we will construct a bijection $\varphi:\mathcal{B}(W_M) \to \mathcal{B}(W_M)$ preserving betweenness, such that
 $$\varphi(\mathcal{B}(W_M) \cap K_1) = \mathcal{B}(W_M) \cap K_2.$$
 We fix an enumeration $\{b_n\}$ of $\mathcal{B}(W_M)$ and define inductively functions $\varphi_n : X_n \to \mathcal{B}(W_M)$ and $\psi_n: Y_n \to \mathcal{B}(W_M)$, where 
 \begin{itemize}
 \item $\{b_1,\ldots,b_n\} \cup \psi_{n-1}(Y_{n-1}) \subseteq X_n \subset \mathcal{B}(W_M)$ and $\{b_1,\ldots,b_n\} \cup \varphi_n(X_n) \subseteq Y_n \subset \mathcal{B}(W_M)$.
 \item $\varphi_{n-1} \subseteq \varphi_n$ and $\psi_{n-1} \subseteq \psi_n$.
 \item $\varphi_n \circ \psi_{n-1} = \id_{Y_{n-1}}$ and $\psi_n \circ \varphi_n=\id_{X_n}$.
 \item $Y_{n-1} \cup \{y\}$ defines a tree $T_{n-1,n-1}^2$ and $\psi_{n-1}(Y_{n-1})\cup \{x\}$ defines a tree $T_{n-1,n-1}^1$ on $W_M$. The domain of $\psi_{n-1}$ is such that if $x_1,x_2 \in Y_{n-1}$, then the endpoint $z$ of $[x_1,y] \cap [x_2,y]$ other than $y$ is also in $Y_{n-1}$ and $\psi_{n-1}(z)=w$ where $w$ is the endpoint of $[\psi_{n-1}(x_1),x] \cap [\psi_{n-1}(x_2),x]$ other than $x$.
 \item $X_{n} \cup \{x\}$ defines a tree $T_{n,n-1}^1$ and $\varphi_{n}(X_{n})\cup \{y\}$ defines a tree $T_{n,n-1}^2$ on $W_M$. The domain of $\varphi_{n}$ is such that if $x_1,x_2 \in X_{n}$, then the endpoint $z$ of $[x_1,x] \cap [x_2,x]$ other than $x$ is also in $X_{n}$ and $\varphi_{n}(z)=w$ where $w$ is the endpoint of $[\varphi_{n}(x_1),y] \cap [\varphi_{n}(x_2),y]$ other than $y$.
 \item Betweenness is preserved by $\varphi_{n}$ and $\psi_{n}$.
 \item $\varphi_n(b) \in K_2$ if and only if $b \in K_2$.
 \item $\psi_n(b) \in K_1$ if and only if $b \in K_2$.
 \end{itemize}
 Then, we will have
 $$\varphi=\bigcup_{n \in \bb{N}} \varphi_n \quad \text{and} \quad \psi=\bigcup_{n \in \bb{N}} \psi_n$$
 so that both are functions defined on $\mathcal{B}(W_M)$ and $\psi = \varphi^{-1}$.
 
 By Theorem \ref{generalizeddendrite} we can fix $x \in K_1 \cap \ep(W_M)$ and $y \in K_2 \cap \ep(W_M)$. First, let $X_1=b_1$. We define $\varphi_1(b_1)$ as any branching point of $W_M$ having the same order as $b_1$ under the condition that $b_1 \in K_1$ if and only if $\varphi_1(b_1) \in K_2$, which is possible since $K_1$ and $K_2$ are nowhere dense in $W_M$. Next, we define $\psi_1(b_1)$ (and possibly the image of some other point so that $Y_1$ consists of $b_1$ and this other point) as follows:
 \begin{enumerate}[label={(\roman*)}]
  \item If $b_1 \in [y,\varphi_1(b_1)] \cap K_2$, then note that $[x,b_1] \cap K_1$ is an arc $[x,z]$, since $x \in \ep(W_M) \cap K_1$. Hence, we can choose $\psi_1(b_1)$ as any branching point of $[x,z]$ having the same order than $b_1$.
  \item If $b_1 \in [y,\varphi_1(b_1)] \cap K_2^c$, then $\varphi_1(b_1) \not\in K_2$ and $b_1 \not\in K_1$. Hence, $[x,b_1] \cap K_1^c$ contains an arc $[z,b_1]$, and we choose $\psi_1(b_1)$ as any branching point $b \in (z,b_1)$ of the same order as $b_1$.
  \item If $b_1 \not\in [y,\varphi_1(b_1)]$ and $b_1 \in K_2$. Then
  $$z=r_{W_M,[y,\varphi_1(b_1)]}(b_1) \in K_2.$$
  Thus, we define $\psi_1(z)$ as before and note that
  $$r_{K_1,[x,\psi_1(z)]}^{-1}(\psi_1(z)) \setminus [x,b_1]$$
  is a nonempty open set of $K_1$ since $K_1 \simeq W_M$, thus there is a branching point
  $$b \in r^{-1}_{K_1,[x,\psi_1(z)]}(\psi_1(z)) \setminus [x,b_1]$$
  of the same order as $b_1$ and this is chosen as $\psi_1(b_1)$. In this case, $Y_1=\{b_1,z\}$.
  \item If $b_1 \not\in [y,\varphi_1(b_1)]$ and $b_1 \not\in K_2$, let
  $$z = r_{W_M,[y,\varphi_1(b_1)]}(b_1)$$
  and define $\psi_1(z)$ accordingly to its case as in (i) or (ii). Then,
  $$Z = r_{W_M,[x,\psi_1(z)]}^{-1}(\psi_1(z)) \setminus ([x,b_1] \cup K_1)$$
  is open and nonempty in $W_M$ (since $[x,b_1] \cup K_1$ is nowhere dense in $W_M$). Hence, we can find $z' \in Z \cap \ep(W_M)$ so that $[\psi_1(z),z'] \setminus K_1$ is nondegenerate, thus we can find a branching point $b$ in $[\psi_1(z),z'] \setminus K_1$ having the same order of $b_1$ and this point can be set as $\psi_1(b_1)$.
 \end{enumerate}

\begin{figure}[h]
\centering
 \begin{tikzpicture}[scale=1.2]
 \begin{scope}[xshift=-5.5cm,yshift=2.25cm]
  \draw (-0.25,-0.25) rectangle (0.25,0.25);
  \node at (0,0) {\small 1};
 \end{scope}
 \begin{scope}
 \draw[fill=white] (180:5) circle (1.5pt) node[above] {\small $x$};
  \foreach \i/\j in {180/5,155/4,130/3,105/2,70/1,15/0.65,-30/0.4,-85/0.2}{
  \draw (0,0)--(\i:\j);
  \foreach \k/\l in {30/1,60/0.5,100/0.3,135/0.2}{
  \draw (\i:\j*0.5)--++({\i-\k}:{\l*(\j/3)});
  \foreach \m/\n in {30/1,60/0.5,100/0.3,135/0.2}{
  \coordinate (A) at (\i:\j*0.5);
  \path (A) +({\i-\k}:{\l*(\j/3)/2}) coordinate (B);
  \draw (B)--++({\i-\k-\m}:{\n*\l*(\j/6)});
  }
  \foreach \r in {0.25,0.75}{
  \foreach \k/\l in {30/1,60/0.5,100/0.3,135/0.2}{
  \draw (\i:\j*\r)--++({\i-\k}:{\l*(\j/6)});
  }
  }
  }
  }
  \end{scope}
  \begin{scope}[scale=0.5]
  \draw[line width=1.5pt] (-10,0)--(-3,0);
  \foreach \i/\j in {150/1,120/0.5,80/0.3,45/0.2}{
  \draw[line width=1.25pt] (-7.5,0)--++(\i:{\j*1.25});
  \draw[line width=1.25pt] (-5,0)--++(\i:{\j*1.665-(150-\i)/1000});
  }
  \end{scope}
  \coordinate (b1a) at (180:3.75);
  \coordinate (b1b) at (150:0.75);
  \coordinate (b1) at ($(b1a)+(b1b)$);
  \draw[fill=white] (b1) circle (1.5pt) node[above] {\small $b_1$};
 \begin{scope}[xshift = 7cm]
 \begin{scope}[xshift=-5.5cm,yshift=2.25cm]
  \draw (-0.25,-0.25) rectangle (0.25,0.25);
  \node at (0,0) {\small 2};
 \end{scope}
 \begin{scope}
  \foreach \i/\j in {180/5,155/4,130/3,105/2,70/1,15/0.65,-30/0.4,-85/0.2}{
  \draw (0,0)--(\i:\j);
  \foreach \k/\l in {30/1,60/0.5,100/0.3,135/0.2}{
  \draw (\i:\j*0.5)--++({\i-\k}:{\l*(\j/3)});
  \foreach \m/\n in {30/1,60/0.5,100/0.3,135/0.2}{
  \coordinate (A) at (\i:\j*0.5);
  \path (A) +({\i-\k}:{\l*(\j/3)/2}) coordinate (B);
  \draw (B)--++({\i-\k-\m}:{\n*\l*(\j/6)});
  }
  \foreach \r in {0.25,0.75}{
  \foreach \k/\l in {30/1,60/0.5,100/0.3,135/0.2}{
  \draw (\i:\j*\r)--++({\i-\k}:{\l*(\j/6)});
  }
  }
  }
  }
  \end{scope}
  \begin{scope}[scale=0.8,rotate=-25]
  \draw[fill=white] (180:5) circle (1.5pt) node[left] {\small $y$};
  \coordinate (b1a) at (180:3.75);
  \coordinate (b1b) at (150:0.75);
  \coordinate (b1) at ($(b1a)+(b1b)$);
  \draw[fill=white] (b1) circle (1.5pt) node[above] {\small $\varphi_1(b_1)$};
  \end{scope}
  \begin{scope}[scale=0.4,rotate=-25]
  \draw[line width=1.5pt] (-10,0)--(-3,0);
  \foreach \i/\j in {150/1,120/0.5,80/0.3,45/0.2}{
  \draw[line width=1.25pt] (-7.5,0)--++(\i:{\j*1.25});
  \draw[line width=1.25pt] (-5,0)--++(\i:{\j*1.665-(150-\i)/1000});
  }
  \end{scope}
  \coordinate (b1a) at (180:3.75);
  \coordinate (b1b) at (150:0.75);
  \coordinate (b1) at ($(b1a)+(b1b)$);
  \draw[fill=white] (b1) circle (1.5pt) node[above] {\small $b_1$};
  \end{scope}
 \begin{scope}[yshift=-3.5cm]
 \begin{scope}[xshift=-5.5cm,yshift=2.25cm]
  \draw (-0.25,-0.25) rectangle (0.25,0.25);
  \node at (0,0) {\small 3};
 \end{scope}
 \begin{scope}
  \foreach \i/\j in {180/5,155/4,130/3,105/2,70/1,15/0.65,-30/0.4,-85/0.2}{
  \draw (0,0)--(\i:\j);
  \foreach \k/\l in {30/1,60/0.5,100/0.3,135/0.2}{
  \draw (\i:\j*0.5)--++({\i-\k}:{\l*(\j/3)});
  \foreach \m/\n in {30/1,60/0.5,100/0.3,135/0.2}{
  \coordinate (A) at (\i:\j*0.5);
  \path (A) +({\i-\k}:{\l*(\j/3)/2}) coordinate (B);
  \draw (B)--++({\i-\k-\m}:{\n*\l*(\j/6)});
  }
  \foreach \r in {0.25,0.75}{
  \foreach \k/\l in {30/1,60/0.5,100/0.3,135/0.2}{
  \draw (\i:\j*\r)--++({\i-\k}:{\l*(\j/6)});
  }
  }
  }
  }
  \end{scope}
  \begin{scope}[scale=0.8,rotate=-25]
  \draw[fill=white] (180:5) circle (1.5pt) node[left] {\small $y$};
  \coordinate (b1a) at (180:3.75);
  \coordinate (b1b) at (150:0.75);
  \coordinate (b1) at ($(b1a)+(b1b)$);
  \draw[fill=white] (b1) circle (1.5pt) node[above] {\small $\varphi_1(b_1)$};
  \draw[line width=1.5pt] (b1)--(180:3.75)--(180:5);
  \draw[line width=1.5pt] (180:5)--(0,0);
  \draw[fill=white] (180:3.75) circle (1.5pt) node[below left] {\small $z$};
  \end{scope}
  \coordinate (b1a) at (180:3.75);
  \coordinate (b1b) at (150:0.75);
  \coordinate (b1) at ($(b1a)+(b1b)$);
  \draw[fill=white] (b1) circle (1.5pt) node[above] {\small $b_1$};
  \draw[line width=1.5pt] (0,0)--(180:3.75)--++(150:0.75);
 \begin{scope}[xshift=7cm]
\begin{scope}[xshift=-5.5cm,yshift=2.25cm]
  \draw (-0.25,-0.25) rectangle (0.25,0.25);
  \node at (0,0) {\small 4};
 \end{scope}
 \begin{scope}
  \foreach \i/\j in {180/5,155/4,130/3,105/2,70/1,15/0.65,-30/0.4,-85/0.2}{
  \draw (0,0)--(\i:\j);
  \foreach \k/\l in {30/1,60/0.5,100/0.3,135/0.2}{
  \draw (\i:\j*0.5)--++({\i-\k}:{\l*(\j/3)});
  \foreach \m/\n in {30/1,60/0.5,100/0.3,135/0.2}{
  \coordinate (A) at (\i:\j*0.5);
  \path (A) +({\i-\k}:{\l*(\j/3)/2}) coordinate (B);
  \draw (B)--++({\i-\k-\m}:{\n*\l*(\j/6)});
  }
  \foreach \r in {0.25,0.75}{
  \foreach \k/\l in {30/1,60/0.5,100/0.3,135/0.2}{
  \draw (\i:\j*\r)--++({\i-\k}:{\l*(\j/6)});
  }
  }
  }
  }
  \end{scope}
  \draw[fill=white] (180:5) circle (1.5pt) node[above] {\small $x$};
  \coordinate (b1a) at (180:3.75);
  \coordinate (b1b) at (150:0.75);
  \coordinate (b1) at ($(b1a)+(b1b)$);
  \draw[fill=white] (b1) circle (1.5pt) node[above left,xshift=0.2cm] {\small $\psi_1(\varphi_1(b_1))$};
  \draw[line width=1.5pt] (b1)--(180:3.75)--(180:5);
  \begin{scope}[rotate=-50,scale=0.6]
  \coordinate (b1a) at (180:3.75);
  \coordinate (b1b) at (150:0.5);
  \coordinate (b1) at ($(b1a)+(b1b)$);
  \draw[fill=white] (b1) circle (2.5pt) node[above,yshift=0.2cm] {\small $\psi_1(b_1)$};
  \draw[line width=1.5pt] (230:6.25)--(0,0)--(180:3.75)--++(150:0.5);
  \end{scope}
  \draw[fill=white] (180:3.75) circle (1.5pt) node[below left] {\small $\psi_1(z)$};
 \end{scope}
 \end{scope}
 \end{tikzpicture}
 \caption{Scheme of the process after the induction's base case: In 1 we have $K_1$ represented by the thicker curve, while $K_2$ is represented in 2. In 3 and 4 we have, respectively, the trees $T_{1,1}^2$ and $T_{1,1}^1$.}
\end{figure}

Now we consider that $\varphi_{n-1}$ and $\psi_{n-1}$ are defined, and we will define $\varphi_n$. If $b_n \in T_{n-1,n-1}^1$, we define $X_n=\{b_1,\ldots,b_n\} \cup \psi_{n-1}(Y_{n-1})$, define $\varphi_n|_{X_{n-1}}=\varphi_{n-1}$ and $\varphi_n|_{\psi_{n-1}(Y_{n-1})}$ such that $\varphi_n \circ \psi_{n-1} = \operatorname{id}_{Y_{n-1}}$. To define $\varphi_n(b_n)$ we split in cases:

\begin{enumerate}[label={(\roman*)}]
  \item $b_n \in X_{n-1}$: Nothing has to be done.

 \item $b_n \in \psi_{n-1}(Y_{n-1}) \setminus X_{n-1}:$ We have that $\varphi_n(b_n)$ is $b_m \in Y_{n-1}$ for which $\psi_{n-1}(b_m)=b_n$. Hence, betweenness is preserved since $\psi_{n-1}$ preserves betweenness.

 \item $b_n \not\in \psi_{n-1}(Y_{n-1})$:
 
  This means $b_n$ is not a node of $T_{n-1,n-1}^1$, hence it lies in the minimal interval $(b_{n_1},b_{n_2})$ where $b_{n_1},b_{n_2} \in \psi_{n-1}(Y_{n-1})$. If $b_n \in K_1$ then either $b_{n_1}$ or $b_{n_2}$ is in $K_1$. Assuming without loss of generality that $b_{n_1} \in K_1$, we know from induction hypothesis that $\varphi_{n-1}(b_{n_1}) \in K_2$ and $\varphi_{n-1}(b_{n_1}) \not\in \ep(K_2)$, hence $K_2 \cap [\varphi_{n-1}(b_{n_1}),\varphi_{n-1}(b_{n_2})]$ is a nondegenerate interval and it contains a branching point $b$ of the same order of $b_n$, thus $\varphi_n(b_n)=b$. If $b_n \not\in K_1$, either $b_{n_1} \not\in K_1$ or $b_{n_2} \not\in K_2$, otherwise we could find a closed curve in $W_M$. Assuming without loss of generality that $b_{n_2} \not\in K_1$, we know from induction hypothesis that $\varphi_{n-1}(b_{n_2}) \not \in K_2$, thus $[\varphi_{n-1}(b_{n_1}),\varphi_{n-1}(b_{n_2})] \cap K_2$ is nondegenerate and we can find a branching point $b$ of the same order as $b_{n}$, hence define $\varphi_n(b_n)=b$.
\end{enumerate}

We claim that the construction preserves betweenness among the points in $X_{n-1}$ and $b_n$. Given $b_{k_1}$ and $b_{k_2}$ in $X_{n-1}$, we can assume either $b_n \in [b_{k_1},b_{k_2}]$ or $b_{k_1} \in [b_n,b_{k_2}]$. In the first case we have $b_n \in [b_{n_1},b_{n_2}]$ and by definition $b_{n_1},b_{n_2} \in [b_{k_1},b_{k_2}]$, so $\varphi_n(b_{n_1}),\varphi_n(b_{n_2}) \in [\varphi_n(b_{k_1}),\varphi_n(b_{k_2})]$ by hypothesis and betweenness is preserved. In the second case we have $b_{k_1} \in [b_n,b_{k_2}]$ so that
 $$b_{n} \in [b_{n_1},b_{n_2}] \subseteq [b_{n_1},b_{k_1}] \subseteq [b_{n_1},b_{k_2}].$$
 Since all these points $b_{n_1},b_{n_2},b_{k_1},b_{k_2}$ are already in $X_{n-1}$, their betweenness relation is preserved, so $\varphi_n(b_{k_1}) \in [\varphi_n(b_{n_2}),\varphi_n(b_{k_2})]$ and hence $\varphi_n(b_{k_1}) \in [\varphi_n(b_{n}),\varphi_n(b_{k_2})]$.

If $b_n \not\in T_{n-1,n-1}^1$, let
$$z = r_{W_M,T_{n-1,n-1}^1}(b_n)$$
and define $\varphi_n(z)$ as above so that $X_n=X_{n-1} \cup \{z,b_n\}$. If $b_n \in K_1$, then $z \in K_1$ and thus
$$r_{K_2,Y}^{-1}(\varphi_n(z))$$
contains an open subset of $K_2$, where
$$Y=T_{n-1,n-1}^2 \cap K_2.$$
Therefore, we can find there a branching point $b$ having the same order as $b_n$, and this is $\varphi_n(b_n)$. If $b_n \not\in K_1$,
$$r_{W_M,T_{n-1,n-1}^2}^{-1}(\varphi_n(z)) \setminus K_2$$
contains an open set since $K_2$ is closed and nowhere dense in $W_M$. Thus, we can find there a branching point $b$ of the same order of $b_n$ and define it as $\varphi_n(b_n)$.

For betweenness, again consider $b_{k_1},b_{k_2}$ in $X_{n-1}$, but in this case only $b_{k_1} \in [b_n,b_{k_2}]$ is possible, since $[b_{k_1},b_{k_2}] \subseteq T_{n-1,n-1}^1$ and $b_n \not\in T_{n-1,n}^1$ so $b_n \not\in [b_{k_1},b_{k_2}]$. We know that
 $$z = r_{W_M,T_{n-1,n-1}^1}(b_n) \in T_{n-1,n-1}^1$$
 was added to the domain and by construction $b_{k_1} \in [z,b_{k_2}]$, so $\varphi_n(b_{k_1}) \in [\varphi_n(z),\varphi_n(b_{k_2})]$ from previous case and
 $$\varphi_n(b_{k_1}) \in [\varphi_n(b_n),\varphi_n(b_{k_2})].$$

 Finally, we need to preserve the induction hypothesis, but the construction makes $[x,b_n] \cap T_{n-1,n-1}^1 = [x,z]$ and thus either the endpoint other than $x$ in
 $$[b_m,x]\cap [z,x]$$
 is already a node of $T^1_{n-1,n-1}$ or it is $z$, which is a node now. The same holds for $T^2_{n-1,n-1}$.

 An analogous argument is used to define $Y_n$ and $\psi_{n}(Y_n)$ from $T^2_{n,n-1}$, yielding trees $T^1_{n,n}$ and $T^2_{n,n}$. Then, we fall again on the inductive case, which means we can define the function $\varphi:\mathcal{B}(W_M) \to \mathcal{B}(W_M)$ that preserves betweenness relation and also the belonging relation to $K_1$ and $K_2$.
 
 From Proposition \ref{duchesnehomeo}, $\varphi$ can be extended to a homeomorphism $\psi: W_M \to W_M$. Given that $\varphi$ is a bijection satisfying
 $$\varphi(\mathcal{B}(W_M) \cap K_1) \subseteq K_2 \quad \text{and} \quad \varphi^{-1}(\mathcal{B}(W_M) \cap K_2) \subseteq K_1,$$
 we have
 $$K_2 \cap \mathcal{B}(W_M) = \varphi(\varphi^{-1}(K_2)) \subseteq \varphi(K_1 \cap \mathcal{B}(W_M)) \subseteq K_2 \cap \mathcal{B}(W_M).$$
 Therefore
 $$K_2 = \overline{\psi(K_1 \cap \mathcal{B}(W_M))} = \psi(\overline{K_1 \cap \mathcal{B}(W_M)}) = \psi(K_1).$$

\end{proof}

Additionally, it can be proven that the collection of nowhere dense full copies of $W_M$ is a zero-dimensional (totally disconnected) subspace of $\cont(W_M)$.

\subsection{Peano GHEC}

Now that we know that there are GHEC spaces that are not HEC. We can ask if there is a characterization of GHEC. For that, we will use the characterization.

\begin{prop}[Characterization of dendrites - {\cite[p. 189 - Exercise 10.50]{nadlercontinuum}}]{}\label{dendritept}

A Peano continuum is tree-like if and only if it is a dendrite.
\end{prop}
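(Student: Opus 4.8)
The plan is to prove the two implications separately, using the standard description of tree-likeness by approximation: a continuum is tree-like precisely when for every $\varepsilon>0$ it admits an $\varepsilon$-map onto a finite tree (a map all of whose point-inverses have diameter $<\varepsilon$). The converse will rely on the fact, recorded implicitly in the paper, that a Peano continuum is a dendrite if and only if it contains no simple closed curve.

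For the implication that every dendrite $X$ is tree-like, I would realize the required $\varepsilon$-maps as first point retractions onto finite subtrees. Fix a countable dense set of $X$ and let $T_1\subseteq T_2\subseteq\cdots$ be the finite subtrees spanned by its initial segments, so that $\bigcup_n T_n$ is dense in $X$. Each map $r_{X,T_n}$ is continuous and surjective onto the finite tree $T_n$, so it suffices to show that for a suitable $n$ every fiber $r_{X,T_n}^{-1}(t)$ has diameter $<\varepsilon$. Such a fiber is $\{t\}$ together with the components of $X\setminus T_n$ whose closure meets $T_n$ exactly at $t$; since all these components contain $t$ in their closure, the estimate reduces to finding a finite subtree whose complementary components all have diameter $<\varepsilon/2$. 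The crux is this subtree lemma, which I would derive from the observation that a compact dendrite has, for each $\delta>0$, only finitely many pairwise disjoint subcontinua of diameter $\ge\delta$. Indeed, were $K_1,K_2,\ldots$ infinitely many such, one could pick $a_i,b_i\in K_i$ with $d(a_i,b_i)\ge\delta$, pass to convergent subsequences $a_i\to a$ and $b_i\to b$ with $a\neq b$, and then, setting $A=[a,b]$, use continuity of the first point map $r_{X,A}$ together with the identity $[a_i,b_i]\cap A=[r_{X,A}(a_i),r_{X,A}(b_i)]$ to conclude that the midpoint of $A$ lies in the arc $[a_i,b_i]\subseteq K_i$ for all large $i$, contradicting disjointness. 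I expect this finiteness statement, and the bookkeeping that turns it into the desired finite subtree, to be the main obstacle of the whole argument.

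For the converse, let $X$ be a tree-like Peano continuum and suppose toward a contradiction that $X$ contains a simple closed curve $S$. Tree-likeness passes to subcontinua: restricting an $\varepsilon$-map $f\colon X\to T$ to $S$ and taking its image gives an $\varepsilon$-map of $S$ onto $f(S)$, which is a connected subcontinuum of the tree $T$ and hence itself a tree, so $S$ is tree-like. But a simple closed curve is not tree-like, because it fails to be unicoherent, being the union of two subarcs meeting in two points, whereas every tree-like continuum is hereditarily unicoherent. This contradiction shows that $X$ contains no simple closed curve, and a Peano continuum without simple closed curves is a dendrite, which completes the proof.
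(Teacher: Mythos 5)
The paper offers no proof of this proposition at all: it is quoted directly from Nadler's book (Exercise 10.50), so there is no internal argument to compare yours against. Your blind proof is, in substance, the standard textbook argument, and both halves are correct in outline. For the converse you use that tree-likeness passes to subcontinua and that the circle is not tree-like because tree-like continua are hereditarily unicoherent; the latter is asserted rather than proved, but it is a classical fact, and combined with the paper's definition of a dendrite as a Peano continuum without simple closed curves this direction is fine. For the forward direction, your key finiteness lemma (in a dendrite, only finitely many pairwise disjoint subcontinua have diameter $\geq\delta$) is correctly identified and your proof of it via the first point map onto $A=[a,b]$ is sound, provided you note that the identity $[a_i,b_i]\cap A=[r_{X,A}(a_i),r_{X,A}(b_i)]$ holds once the left-hand side is nonempty, which is automatic as soon as $r_{X,A}(a_i)\neq r_{X,A}(b_i)$, since a connected set missing $A$ lies in a single fiber of $r_{X,A}$. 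The one place you undersell the difficulty is the ``bookkeeping'' that converts the finiteness lemma into a finite subtree $T_n$ all of whose complementary components have diameter $<\varepsilon/2$: this needs a genuine argument, e.g.\ observing that each large component of $X\setminus T_{n+1}$ sits inside a large component of $X\setminus T_n$, extracting by K\H{o}nig's lemma a nested sequence $C_1\supseteq C_2\supseteq\cdots$ of components of diameter $\geq\varepsilon/2$, and then noting that $K=\bigcap_n\overline{C_n}$ is a nondegenerate subcontinuum meeting each $T_n$ in at most one point --- which is impossible, since any interior point $w$ of an arc in $K$ separates $X$ and hence lies on the arc in $T_n$ joining two points of the spanning dense set on opposite sides of $w$, forcing $|K\cap T_n|\geq 2$ for large $n$. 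Finally, you work with the $\varepsilon$-map characterization of tree-likeness while the paper uses finite open covers with tree nerves (its Proposition 3.17); these are classically equivalent for continua, but since the equivalence is itself nontrivial it deserves at least a citation.
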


On the other hand, there is another characterization of tree-like spaces. Recall that, given a finite open cover $\scr{U}=\{U_1,\ldots,U_n\}$ of $X$, the nerve of $\scr{U}$ is the graph having as nodes $v_1,\ldots,v_n$ and such that there exists an edge between $v_i$ and $v_j$ if and only if $U_i \cap U_j \ne \emptyset$.

\begin{prop}
    A continuum $X$ is tree-like if and only if for all $\varepsilon>0$ there exists a finite open cover $\scr{U}$ of $X$ whose mesh is smaller than $\varepsilon$ and has a tree as a nerve.
\end{prop}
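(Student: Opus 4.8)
The plan is to prove both implications by passing between open covers with tree nerves and $\varepsilon$-maps onto trees, using that $X$ is tree-like exactly when for every $\varepsilon>0$ there is an $\varepsilon$-map of $X$ onto a tree, i.e. a continuous surjection $f$ onto a finite acyclic graph with $\operatorname{diam} f^{-1}(t)<\varepsilon$ for every $t$.

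First I would prove the implication from covers to maps. Given $\varepsilon>0$, take a finite open cover $\scr{U}=\{U_1,\dots,U_n\}$ of mesh $<\varepsilon$ whose nerve $T$ is a tree, and let $\{\kappa_i\}_{i=1}^n$ be a partition of unity subordinate to $\scr{U}$. The key observation is that, since a tree contains no $3$-cycle, no three members of $\scr{U}$ can share a common point: three sets through one point would pairwise intersect and produce a triangle in the nerve. Hence at each $x\in X$ at most two of the numbers $\kappa_i(x)$ are positive, and when exactly two are positive the corresponding vertices span an edge of $T$. Therefore the canonical map $\kappa\colon X\to |T|$, $\kappa(x)=\sum_i\kappa_i(x)\,v_i$, is a well-defined continuous map into the geometric realization of the tree, and its image is a subcontinuum of $|T|$, thus a subtree. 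Finally $\kappa$ is an $\varepsilon$-map: if $p$ lies in the open edge $[v_i,v_j]$ or is a vertex $v_i$, then the coordinate of $\kappa(x)$ at $v_i$ equals $\kappa_i(x)$, so $\kappa(x)=p$ forces $x\in U_i$, whence $\kappa^{-1}(p)\subseteq U_i$ and $\operatorname{diam}\kappa^{-1}(p)\le\operatorname{diam}U_i<\varepsilon$. Thus $X$ admits an $\varepsilon$-map onto a tree for every $\varepsilon$, so it is tree-like.

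For the converse, suppose $X$ is tree-like and fix $\varepsilon>0$; choose an $\varepsilon$-map $f\colon X\to T$ onto a tree $T$. The bridge I would establish is a uniform-smallness lemma: if $f\colon X\to Y$ is a continuous surjection of compacta with $\operatorname{diam}f^{-1}(y)<\varepsilon$ for all $y$, then there is $\eta>0$ with $\operatorname{diam}f^{-1}(A)<\varepsilon$ whenever $\operatorname{diam}A<\eta$. This follows by a compactness argument: otherwise one gets $A_m$ with $\operatorname{diam}A_m\to 0$ and $x_m,x_m'\in f^{-1}(A_m)$ with $d(x_m,x_m')\ge\varepsilon-\tfrac1m$; passing to convergent subsequences $x_m\to x$, $x_m'\to x'$ gives $d(x,x')\ge\varepsilon$ while $d(f(x),f(x'))=\lim d(f(x_m),f(x_m'))=0$, so $x,x'$ lie in one fiber, contradicting the fiber bound. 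Granting the lemma, take the corresponding $\eta$ and cover $T$ by the open stars of the vertices of a sufficiently fine triangulation; this cover $\scr{V}$ has mesh $<\eta$ and its nerve is isomorphic to the $1$-skeleton, hence a tree. Then $\scr{U}=\{f^{-1}(V)\mid V\in\scr{V}\}$ is a finite open cover of $X$ of mesh $<\varepsilon$ by the lemma, and since $f$ is onto we have $f^{-1}(V_i)\cap f^{-1}(V_j)=f^{-1}(V_i\cap V_j)\ne\emptyset$ precisely when $V_i\cap V_j\ne\emptyset$, so the nerve of $\scr{U}$ coincides with that of $\scr{V}$ and is a tree.

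The main obstacle I expect is the converse direction, concentrated in the uniform-smallness lemma: the $\varepsilon$-map only bounds the diameters of single fibers, and upgrading this to a bound on preimages of small sets is exactly what makes the mesh condition hold for the pulled-back cover. The remaining care is to produce covers of the tree $T$ with tree nerve and arbitrarily small mesh, for which the open-star cover of a fine triangulation is the natural and clean choice.
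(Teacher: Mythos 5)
Your proof is correct. Note, however, that the paper does not actually supply a proof of this proposition --- the proof environment is commented out in the source, leaving only a hint that one should decompose the tree-nerve cover into chains and use perfect normality to build Urysohn-type maps of each chain onto $[0,1]$, following Nadler. Your route is the standard alternative: in the forward direction you use the canonical (barycentric) map into the geometric realization of the nerve, where the key point --- that a tree nerve has no $3$-cycle, hence no three cover elements meet pairwise, so at most two partition-of-unity functions are positive at any point --- is exactly what makes the map land in $|T|$ and be an $\varepsilon$-map; this meshes well with the paper's convention that the nerve is a graph whose edges record pairwise intersections. In the converse direction your uniform-smallness lemma is the genuinely necessary ingredient (an $\varepsilon$-map only controls single fibers, not preimages of small sets), and the compactness argument you give for it is sound; pulling back the open-star cover of a fine subdivision of $T$ then works because $f$ is onto, so the nerve of the pulled-back cover equals that of the star cover. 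Compared with the chain-by-chain construction hinted at in the source, your argument is more uniform and avoids the bookkeeping of gluing the maps defined on individual chains at branch vertices, at the cost of invoking the partition-of-unity machinery. One cosmetic remark: you should say explicitly that the image $\kappa(X)$, being a subcontinuum of a finite tree, is again a finite tree (possibly degenerate), so that $\kappa$ is indeed an $\varepsilon$-map \emph{onto a tree} as your working definition of tree-like requires.
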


Additionally, we can prove the following:

\begin{lem}{}\label{treeisgdelta}
Let $X$ be a continuum. Then tree-like continua form a $G_\delta$ subset of $\cont(X)$.
\end{lem}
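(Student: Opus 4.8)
The plan is to work through the nerve characterization stated just above. For each $n \in \bb{N}$ I would let $\scr{T}_n$ be the set of all $K \in \cont(X)$ for which there exist finitely many open sets $V_1,\dots,V_m \subseteq X$ with $K \subseteq \bigcup_i V_i$, with $V_i \cap K \neq \emptyset$ and $\operatorname{diam}(V_i) < 1/n$ for each $i$, and such that the nerve of the family $\{V_i\}$ (an edge joining $i$ and $j$ exactly when $V_i \cap V_j \neq \emptyset$) is a tree. The decisive feature of this formulation is that the two delicate requirements, the diameter bound and the tree condition, depend only on the family $\{V_i\}$ and not on $K$. I would then establish that $\{K \in \cont(X) : K \text{ is tree-like}\} = \bigcap_{n} \scr{T}_n$ and that each $\scr{T}_n$ is open in $\cont(X)$; since a countable intersection of open sets is $G_\delta$, this concludes the proof.

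Openness of $\scr{T}_n$ is the easy step. Given $K \in \scr{T}_n$ with witnesses $V_1,\dots,V_m$, I would reuse the same family $\{V_i\}$ for every continuum close to $K$. As $\bigcup_i V_i$ is open and contains the compact set $K$, some $\delta_0$-neighbourhood of $K$ lies inside it; and for each $i$, fixing $x_i \in V_i \cap K$ and a ball $B(x_i,\delta_i) \subseteq V_i$, any $K'$ with $h(K,K')<\delta_i$ must meet $B(x_i,\delta_i) \subseteq V_i$. Putting $\delta = \min\{\delta_0,\delta_1,\dots,\delta_m\}>0$, every $K'$ within $\delta$ of $K$ is covered by $\{V_i\}$ and meets each $V_i$, while the diameter and nerve conditions are inherited unchanged. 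Hence a whole Hausdorff ball around $K$ lies in $\scr{T}_n$.

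For the equality, the inclusion $\bigcap_n \scr{T}_n \subseteq \{\text{tree-like}\}$ is routine: from witnesses $\{V_i\}$ for $K \in \scr{T}_n$, the traces $\{V_i \cap K\}$ form an open cover of $K$ of mesh $<1/n$, and its nerve is a subgraph of the (tree) nerve of $\{V_i\}$, hence a forest; but the nerve of any open cover of a connected space is connected (otherwise the unions of the two index groups would split $K$ into two disjoint nonempty open pieces), so this forest is a tree, and the characterization yields tree-likeness. I expect the reverse inclusion to be the main obstacle, since one must realize a tree-nerve cover of $K$ by open sets of the ambient $X$ without the extensions producing spurious intersections, which would create cycles in the nerve.

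To handle it, I would start from a cover $\{U_i\}$ of $K$, open in $K$, of mesh $<1/(3n)$ and with tree nerve $N$. Whenever $U_i \cap U_j = \emptyset$, the sets $U_i,U_j$ are separated in $X$: a point of $U_j$ has the relative neighbourhood $U_j$ missing $U_i$, so it avoids $\overline{U_i}$, and symmetrically. Writing $B_i = \bigcup\{U_j : U_i \cap U_j = \emptyset\}$, the pair $U_i, B_i$ is separated, so for a fixed $x_i \in U_i$ the set $V_i = \{x \in X : d(x,U_i) < d(x,B_i)\} \cap B(x_i,1/(3n))$ is open, contains $U_i$, and has $\operatorname{diam}(V_i)<1/n$. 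A short computation shows $V_i \cap V_j = \emptyset$ whenever $U_i \cap U_j = \emptyset$ (a common point $x$ would give $d(x,U_i)<d(x,B_i)\le d(x,U_j)<d(x,B_j)\le d(x,U_i)$), whereas $V_i \cap V_j \supseteq U_i \cap U_j \neq \emptyset$ along the edges of $N$. Thus the nerve of $\{V_i\}$ equals $N$, a tree, and $\{V_i\}$ witnesses $K \in \scr{T}_n$; as $n$ was arbitrary, $K \in \bigcap_n \scr{T}_n$, completing the argument.
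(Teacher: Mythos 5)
Your proof is correct and follows essentially the same route as the paper: the sets $\scr{T}_n$ are exactly the paper's $\mathcal{U}_n=\bigcup\{\lr{U_1,\ldots,U_k} : \{U_1,\ldots,U_k\}\in\scr{C}_n\}$, and the lemma is the statement that their intersection is the set of tree-like subcontinua. The only difference is that you supply the details the paper leaves implicit, in particular the fattening of a relatively open tree-nerve cover of $K$ to open sets of $X$ with the same nerve via the separation sets $V_i=\{x : d(x,U_i)<d(x,B_i)\}\cap B(x_i,1/(3n))$, which is the genuinely nontrivial inclusion and is handled correctly.
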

\begin{proof}
Let $n \in \bb{N}$ and $\scr{C}_n$ be the class of all finite collections of open subsets of $X$ having mesh smaller than $1/n$ and a tree as a nerve. Define
$$\mathcal{U}_n = \bigcup \{ \lr{U_1,\ldots,U_k} \subset \cont(X)\ | \ \{U_1,\ldots,U_k\} \in \scr{C}_n\}$$
Each $\mathcal{U}_n$ is open and the set
$$U = \bigcap_{n \in \bb{N}} \mathcal{U}_n$$
is the collection of all tree-like subcontinua of $X$, thus it is a $G_\delta$ set.
\end{proof}

\begin{theo}\label{peanoimpliestreegeneric}
Let $X$ be a Peano continuum. Then tree-like subcontinua form a comeager set in $\cont(X)$.
\end{theo}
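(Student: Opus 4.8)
The plan is to combine Lemma \ref{treeisgdelta} with the Baire Category Theorem. Since the tree-like subcontinua already form a $G_\delta$ subset of $\cont(X)$, it suffices to prove that this set is \emph{dense}. So, given an arbitrary $K \in \cont(X)$ and $\varepsilon > 0$, the goal is to produce a tree-like subcontinuum $T$ with $h(T,K) < \varepsilon$; in fact I would produce a $T$ that is a finite tree contained in $K$, from which density of the whole class of tree-like subcontinua follows at once.

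The first reduction I would use is the standard fact that every subcontinuum of a Peano continuum is again a Peano continuum; in particular $K$ is locally connected and arcwise connected. By Proposition \ref{dendritept}, being tree-like is equivalent to being a dendrite for a Peano continuum, so it is enough to find, inside the Peano continuum $K$, a finite tree $T$ (a connected graph without cycles, hence a dendrite) that is $\varepsilon$-dense, i.e. $K \subseteq \{x \in X : d(x,T) < \varepsilon\}$. Since $T \subseteq K$ automatically gives the reverse estimate, this yields $h(T,K) < \varepsilon$.

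To build $T$, I would fix a finite $\tfrac{\varepsilon}{2}$-net $\{p_1,\ldots,p_m\}$ in $K$ and then construct an increasing chain of finite trees $T_1 \subseteq \cdots \subseteq T_m \subseteq K$ with $p_i \in T_i$. Start with $T_1 = \{p_1\}$. Given a finite tree $T_k \subseteq K$, if $p_{k+1} \in T_k$ put $T_{k+1} = T_k$; otherwise, using arcwise connectedness of $K$, choose an arc $A \subseteq K$ from $p_{k+1}$ to $T_k$, parametrize it starting at $p_{k+1}$, and let $q$ be the first point of $A$ lying in $T_k$, which exists because $A \cap T_k$ is a nonempty closed subset of the arc. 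Writing $A'$ for the subarc from $p_{k+1}$ to $q$, we have $A' \cap T_k = \{q\}$, so $T_{k+1} = T_k \cup A'$ is again a finite tree, since attaching an arc along a single point creates no simple closed curve. The terminal tree $T := T_m$ contains the whole net, hence is $\tfrac{\varepsilon}{2}$-dense in $K$, and being a dendrite it is tree-like by Proposition \ref{dendritept}.

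The two delicate points I would isolate are the only real content: that subcontinua of Peano continua are Peano (needed to guarantee that the connecting arcs $A$ can be taken \emph{inside} $K$, which controls one side of the Hausdorff distance for free), and the first-hitting-point argument ensuring $A' \cap T_k = \{q\}$, which is precisely what prevents a cycle from appearing and keeps every $T_k$ a dendrite. Everything else is bookkeeping, and comeagerness follows immediately from the resulting density together with the $G_\delta$ property of Lemma \ref{treeisgdelta} via Baire.
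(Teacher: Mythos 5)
Your overall strategy (Lemma \ref{treeisgdelta} plus density plus Baire) matches the paper's, and your tree-building step --- attaching each new arc to the previously built tree at its first hitting point so that no simple closed curve appears --- is exactly the recursion the paper uses. But there is a genuine gap at the very first reduction: it is \emph{not} true that every subcontinuum of a Peano continuum is again a Peano continuum. The closure of the topologist's sine curve sits inside the square $[0,1]^2$, which is Peano, yet it is neither locally connected nor arcwise connected. Consequently your plan to produce a finite tree $T \subseteq K$ by joining the points of an $\varepsilon/2$-net of $K$ with arcs \emph{inside} $K$ breaks down: for such a $K$ there is simply no arc in $K$ from a point of the limit segment to a point of the sine graph, so the chain $T_1 \subseteq \cdots \subseteq T_m$ cannot be built, and in fact no tree contained in $K$ can be $\varepsilon$-close to $K$ for small $\varepsilon$.

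The repair is the one the paper makes: drop the requirement $T \subseteq K$. Cover $K$ by finitely many connected open sets $U_1,\ldots,U_n$ of diameter less than $\varepsilon$ meeting $K$, and note that their union $U$ is a connected open subset of the Peano continuum $X$, hence arcwise connected. Choosing $a_i \in U_i \cap K$ and joining $a_1$ to each $a_j$ by an arc \emph{in $U$} (not in $K$), one gets a Peano continuum $R \subseteq U$ containing all the $a_i$, and your first-hitting-point recursion applied to $R$ then produces a finite tree $T \subseteq U$ with $\{a_1,\ldots,a_n\} \subseteq T$, so that $h(T,K) < \varepsilon$ because $T$ and $K$ both lie in $U$ and both meet every $U_i$. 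With that modification the rest of your argument goes through verbatim; the only casualty is the claim that the approximating tree can be taken inside $K$, which is false in general.
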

\begin{proof}
    By Theorem \ref{treeisgdelta} we know that the collection of tree-like subcontinua is $G_\delta$ in $\cont(X)$. Thus, we only need to show that trees are dense in $X$. 

    Given $K \in \cont(X)$, for every $\varepsilon > 0$ there exists a finite collection $\{U_1,\ldots,U_n\}$ of connected open subsets of $X$ of diameter smaller than $\varepsilon$ that cover $K$. Assume without loss of generality that $U_i \cap K \ne \emptyset$ for every $i$, so $K \in \lr{U_1,\ldots,U_n}$ and $U =U_1 \cup \cdots \cup U_n$ is connected. For each $i$, take $a_i \in U_i \cap K$. Given that $\{a_1,\ldots,a_n\}$ is connected, we can find an arc $r_j \subseteq U$ from $a_1$ to $a_j$ for each $j \in \{2,\ldots, n\}$. Hence, $R=r_2 \cup \cdots \cup r_n$ is a Peano continuum and by construction $h(R,K)<\varepsilon$. Now we will recursively define a tree using $R$. Start with $r_2$. If $a_3 \in r_2$, nothing has to be done in the first step. If $a_3 \not\in r_2$, let $s_3$ be an arc on $R$ from $a_3$ to a point $y \in r_2$ such that $s_3 \cap r_2 = \{y\}$. $T_3=r_2 \cup s_3$ is a tree. Suppose that the tree $T_m$ is defined until the point $a_m$ where $m<n$. If $a_{m+1} \in T_m$, $T_{m+1}=T_m$. If not, we define $T_{m+1}=T_m \cup s_{m+1}$ where $s_{m+1}$ is any arc on $R$ from $a_{m+1}$ to $y \in T_m$ such that $s_{m+1} \cap T_m = \{y\}$. By definition, $T_{m+1}$ is a tree and $T_n$ is a tree such that $h(T,K)<\varepsilon$ since $T \subseteq U$ and $\{a_1,\ldots,a_n\} \subseteq T$. Therefore, trees are dense in $\cont(X)$.
\end{proof}

\begin{cor}\label{PeanoGHEC}
If $X$ is a nondegenerate Peano GHEC, then $X$ is an arc or a dendrite such that for every $m \in \bb{N} \cup \{\omega\}$ the collection of branching points of order $m$ is either dense or empty.
\end{cor}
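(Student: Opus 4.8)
The plan is to combine the structural results already established for GHEC Peano continua with the two genericity theorems proved above. Since $X$ is a nondegenerate Peano GHEC, comeager many subcontinua of $X$ are homeomorphic to $X$; simultaneously, by Theorem \ref{peanoimpliestreegeneric}, comeager many subcontinua are tree-like. A comeager set in the Polish space $\cont(X)$ is dense and nonempty (indeed two comeager sets intersect), so I can fix a single subcontinuum $K$ that is at once homeomorphic to $X$ and tree-like. Being tree-like and a Peano continuum, $K$ is a dendrite by Proposition \ref{dendritept}, and hence $X \simeq K$ is itself a dendrite. This already rules out every Peano continuum containing a simple closed curve, so the remaining work is entirely about dendrites.

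Next I would analyze the branching structure. For each cardinal $m \in \bb{N} \cup \{\omega\}$, let $\mathcal{B}_m(X)$ denote the set of branching points of order $m$. I want to show that each $\mathcal{B}_m(X)$ is either dense or empty. The natural route is again genericity: if $\mathcal{B}_m(X)$ is nonempty but fails to be dense, then there is a free arc or at least an open arc $A$ in $X$ whose interior contains no branching point of order $m$, while some branching point of order $m$ exists elsewhere. I would exploit that comeager many subcontinua $K$ satisfy $K \simeq X$ together with the full/nowhere-dense genericity developed in the previous subsection (cf. Proposition \ref{fullcomeager} and Theorem \ref{generalizeddendrite}), so that a generic copy $K$ must reproduce the orders of the branching points it contains. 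The idea is that, by the property of Kelley for Peano continua (or by the Boundary Bumping Theorem together with the first point map $r_{X,K}$), a generic $K$ meets every open subset of $X$ in a manner that faithfully records the local order structure; if $X$ had a branching point of order $m$ in one region but an open arc free of such points elsewhere, one could produce two comeager-many-realized local pictures that cannot both be homeomorphic to $X$, contradicting $X \simeq K$ generically.

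The cleanest formulation of the dichotomy is topological homogeneity of the generic realization: because a comeager subset of $\cont(X)$ consists of copies of $X$, and copies are dense, every nonempty open set of $X$ contains a copy of $X$; inside such a copy the order spectrum $\{m : \mathcal{B}_m \neq \emptyset\}$ must coincide with that of $X$. If $\mathcal{B}_m(X) \neq \emptyset$, I would locate a small copy $K_0 \simeq X$ inside an arbitrary basic open set and observe that $K_0$ must itself contain a branching point of order $m$; since the open set was arbitrary, $\mathcal{B}_m(X)$ is dense. The arc case arises precisely when no branching point exists at all, i.e. when every $\mathcal{B}_m(X)$ is empty, in which case the dendrite $X$ has no branching points and is therefore an arc.

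The main obstacle I anticipate is justifying that a generic copy $K \simeq X$ sitting in a prescribed open set $U$ must contain a branching point of the same order $m$ as one occurring in $X$ — in other words, transferring the global order spectrum of $X$ down to arbitrarily small generic subcopies. This requires more than abstract density of copies: I need the genericity of $\operatorname{Full}(X)$-type behavior to guarantee that the branching orders are not lost under passing to small subcontinua, and to rule out the degenerate scenario where all branching points of order $m$ accumulate only on a nowhere dense set while a free arc survives elsewhere. Handling the free-arc alternative carefully — showing that a dendrite with a free arc but arcwise-nondense branching of some order cannot be GHEC — is where I expect the genuine continuum-theoretic input, via the Boundary Bumping Theorem and the nowhere-density result of Proposition \ref{WMnowheredense}, to be indispensable.
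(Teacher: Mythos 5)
Your proposal is correct and follows essentially the same route as the paper: intersecting the comeager family of tree-like subcontinua with the comeager family of copies of $X$ forces $X$ to be a dendrite, and the genericity of $\operatorname{Full}(X)$ (Proposition \ref{fullcomeager}) is exactly what the paper uses to place a full copy of $X$ inside an arbitrary open set $U$ and transfer its intrinsic order-$m$ branching point back to a branching point of $X$ of order $m$ in $U$, giving the dense-or-empty dichotomy. The obstacle you flag at the end --- that the order of a branching point in a small subcopy could a priori drop below its order in $X$ --- is precisely the point the paper's proof resolves via fullness, so your plan closes the argument.
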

\begin{proof}
By Lemma \ref{peanoimpliestreegeneric}, there is a comeager subset of $\cont(X)$ made of tree-like continua. Given that $X$ is GHEC, we can then find a comeager subset of homeomorphic copies of $X$ that are tree-like, so $X$ is a tree-like Peano continuum, hence a dendrite by Proposition \ref{dendritept}. In what follows, suppose that $X$ is a nondegenerate dendrite which is not an arc. This implies that $X$ cannot contain a free arc, since this would imply the existence of an open subset $\cont(X)$ containing only arcs and points, which are not homeomorphic to $X$. Therefore, branching points are arcwise dense. 

From Proposition \ref{fullcomeager}, we have that comeager many subcontinua of $X$ are full in $X$. With this, we claim that if $X$ has a branching point of order $m$, then the collection of branching points of order $m$ is dense in $X$. Suppose by contradiction that $U$ is an open subset of $X$ that does not contain any branching point of order $m$. This open set induces a nonempty open set $\lr{U}$ in $\cont(X)$, but no element of $\scr{H} \cap \lr{U}$ can be a copy of $X$ since there is no point of order $m$ in $U$ and every branching point of a dendrite in $\scr{H}$ has the same order as in $X$. Also, the collection of homeomorphic copies of $X$ cannot be disjoint to $\scr{H}$, otherwise $\cont(X)$ would be meager, which is a contradiction since it is a compact metric space. Therefore, if $X$ has a branching point of order $m$, branching points of order $m$ are dense.
\end{proof}

\begin{cor}
If $X$ is a GHEC dendrite whose branching points are of a single order $m$, then $X \simeq W_m$.
\end{cor}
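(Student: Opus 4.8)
The plan is to first read off the structure of $X$ from Corollary \ref{PeanoGHEC}, and then to produce a homeomorphism onto $W_m$ by matching branching points and invoking Proposition \ref{duchesnehomeo}.

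First I would apply Corollary \ref{PeanoGHEC}. Since $X$ possesses a branching point (of order $m$) it is not an arc, so it must be a dendrite for which, for each order $m' \in \bb{N} \cup \{\omega\}$, the set of branching points of order $m'$ is either dense or empty. The hypothesis that all branching points have the single order $m$ then forces the order-$m$ branching points to be dense and every other order to be absent. In particular both $X$ and $W_m$ are dendrites whose branching points all have order $m$ and are arcwise dense; consequently neither contains a free arc, since an open subarc disjoint from $\mathcal{B}(X)$ (respectively $\mathcal{B}(W_m)$) would contradict density. This puts us in the setting of Proposition \ref{duchesnehomeo}.

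Next I would build a betweenness-preserving bijection $\varphi : \mathcal{B}(X) \to \mathcal{B}(W_m)$ and extend it to a homeomorphism $X \simeq W_m$ via Proposition \ref{duchesnehomeo}. Fixing enumerations of $\mathcal{B}(X)$ and $\mathcal{B}(W_m)$, I would construct $\varphi$ by a back-and-forth argument, exactly as in the proof of Theorem \ref{stronglyWM} but stripped of the auxiliary constraints coming from $K_1$ and $K_2$. At stage $n$ one has a finite betweenness-preserving partial bijection whose domain and range span finite trees $T$ and $\varphi(T)$; one adds the next as-yet-unmatched branching point (alternating between the two sides so as to guarantee surjectivity) and must locate its image consistently.

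The heart of the argument --- and the step I expect to be the only real obstacle --- is the single extension step. If the new point $a$ lies on an edge $(a_i,a_j)$ of $T$, arcwise density of order-$m$ branching points supplies a point of order $m$ in the corresponding arc $(\varphi(a_i),\varphi(a_j))$. If $a$ lies off $T$, I set $z=r_{X,T}(a)$; after matching $z$ as above, I must find an order-$m$ branching point in a component of $W_m \setminus \{\varphi(z)\}$ not already occupied by $\varphi(T)$. This is where uniformity of the order is essential: betweenness preservation makes $\varphi$ a combinatorial isomorphism of the spanned trees, so the number of edges of $T$ at $z$ equals that of $\varphi(T)$ at $\varphi(z)$; since $\ord(z,X)=m=\ord(\varphi(z),W_m)$, the two points have equally many free directions, and an unused one at $\varphi(z)$ exists whenever one exists at $z$. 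Density then yields an order-$m$ branching point inside that unused component to serve as $\varphi(a)$. Running the back-and-forth to completion gives the required bijection, and Proposition \ref{duchesnehomeo} finishes the proof.
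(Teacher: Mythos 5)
Your proposal is correct, but it takes a longer route than the paper at the second step. The paper's own proof is two lines: after invoking Corollary \ref{PeanoGHEC} to get that the branching points (all of order $m$) are arcwise dense, it concludes $X \simeq W_m$ directly from the defining characterization of the generalized Wa\.zewski dendrite, i.e.\ it treats the uniqueness, up to homeomorphism, of a dendrite with arcwise dense order-$m$ branching points and no others as known. You instead re-derive that uniqueness explicitly: a back-and-forth construction of a betweenness-preserving bijection $\mathcal{B}(X) \to \mathcal{B}(W_m)$, followed by Proposition \ref{duchesnehomeo}. Your extension step is sound — the join point $z=r_{X,T}(a)$ is itself a branching point (it has at least three directions: two in $T$, or one if it is a leaf, plus the one toward $a$), hence of order $m$; the combinatorial isomorphism of the spanned trees gives $\deg_T(z)=\deg_{\varphi(T)}(\varphi(z))<m=\ord(\varphi(z),W_m)$, so a free direction at $\varphi(z)$ exists, and arcwise density supplies an order-$m$ branching point inside it. What your approach buys is self-containedness: the corollary no longer leans on the classical characterization theorem, only on Proposition \ref{duchesnehomeo} and the machinery already developed for Theorem \ref{stronglyWM}. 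What it costs is that it duplicates, in a simpler setting, a construction the paper has already carried out in full, where a one-line appeal to the definition of $W_m$ suffices.
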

\begin{proof}
    By Corollary \ref{PeanoGHEC} the branching points of $X$ are arcwise dense and therefore $X \simeq W_m$ since all the branching points have the same order.
\end{proof}

Note that we still do not have that the collection of branching points of every order is arcwise dense in $X$, therefore we do not know whether every Peano GHEC that is not an arc is a generalized Wa\.zewski dendrite.

\subsection{GHEC and further generic properties}

In \cite{basso2024surfaces} it is shown that every homogeneous Peano continuum other than the circle does not admit a generic chain. From this idea, we can ask if being GHEC is a sufficient condition to have a generic chain. This has led us to the definition of the following concepts:

\begin{defi}{}{}\label{defgenericchain}
Let $X$ be a continuum, we say that
\begin{itemize}
\item GCHEC holds for $X$ if and only if
$$\{\mathcal{C} \in \moa(X) \ | \ \forall C \in \mathcal{C}, C \text{ nondegenerate implies } C \simeq X\}$$
is a comeager subset of $\moa(X)$.
\item GCGHEC holds for $X$ if and only if
$$\{\mathcal{C} \in \moa(X) \ | \ \forall^\ast C \in \mathcal{C}, C \text{ nondegenerate implies } C \simeq X \}$$
is a comeager subset of $\moa(X)$.
\end{itemize}
\end{defi}

\begin{prop}{}{}
If $X$ is HEC, then $X$ is GHEC and GCHEC holds for $X$.
\end{prop}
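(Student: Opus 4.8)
The plan is to establish the two conclusions separately, both of which follow quickly from the strength of the HEC hypothesis. First I would handle the GHEC assertion. By the definition of HEC, every nondegenerate subcontinuum of $X$ is homeomorphic to $X$, so
$$\{K \in \cont(X) \mid K \simeq X\} = \cont(X) \setminus \operatorname{Fin}_1(X).$$
As recalled in the introduction, $\operatorname{Fin}_1(X)$ is a closed subset of $\cont(X)$ with empty interior; hence its complement is open and dense, and in particular comeager. This immediately yields that $X$ is GHEC, with no further Baire-category argument needed beyond this observation.

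For the GCHEC assertion, the key point is that the HEC hypothesis is in fact stronger than what GCHEC requires: it forces the defining condition to hold for \emph{every} maximal order arc, not merely for comeager many of them. Indeed, fix any $\mathcal{C} \in \moa(X)$. Every element $C \in \mathcal{C}$ is by definition a subcontinuum of $X$, so if $C$ is nondegenerate then $C \simeq X$ precisely because $X$ is HEC. Thus each $\mathcal{C} \in \moa(X)$ satisfies the predicate ``for all $C \in \mathcal{C}$, $C$ nondegenerate implies $C \simeq X$'', which means that the set appearing in the definition of GCHEC is all of $\moa(X)$.

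Since the whole Polish space $\moa(X)$ is trivially comeager in itself, GCHEC holds for $X$. I do not anticipate any genuine obstacle here: the entire content of the proof lies in unpacking the definitions and observing that HEC renders the universally quantified condition in GCHEC automatic. The only point worth recording carefully is that the members of an order arc in $\cont(X)$ are themselves honest subcontinua of $X$, so that the HEC hypothesis applies to them directly and makes the passage from $\cont(X)$ to $\moa(X)$ immediate.
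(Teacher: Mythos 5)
Your proposal is correct and follows essentially the same route as the paper: identify the homeomorphic copies with $\cont(X)\setminus\operatorname{Fin}_1(X)$ for the GHEC part, and observe that HEC makes the GCHEC condition hold for every maximal order arc, so the relevant set is all of $\moa(X)$. The only cosmetic difference is that you cite the introduction's claim that $\operatorname{Fin}_1(X)$ has empty interior, whereas the paper's proof re-derives it on the spot via the Boundary Bumping Theorem.
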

\begin{proof}
If $X$ is HEC, then 
$$\{F \in \cont(X) \ | \ F \simeq X\} = \cont(X) \setminus \operatorname{Fin}_1(X)$$
and since $\operatorname{Fin}_1(X)$ is closed in $\cont(X)$, we just have to show that $\operatorname{Fin}_1(X)$ has an empty interior and $X$ will be GHEC. Given any point $x \in X$, the Boundary Bumping Theorem gives us nondegenerate continua of arbitrarily small diameter containing $x$. Hence, every open set in $\cont(X)$ containing $\{x\}$ has a nondegenerate element, and thus is not contained in $\operatorname{Fin}_1(X)$, that is, $\operatorname{Fin}_1(X)$ has an empty interior.

Moreover, if $X$ is HEC, then every element of any maximal order arc in $\cont(X)$ is either a point or a nondegenerate subcontinuum, but since every nondegenerate subcontinuum is homeomorphic to $X$, GCHEC holds for $X$.
\end{proof}

Note that the previous proposition does not give any relation between GHEC and the generic chain properties in any direction. The essence of what we need is to understand if the existence of a comeager set in $\cont(X)$ induces a comeager set in $\moa(X)$ or vice-versa. Therefore, the following theorem is useful:

\begin{theo}[Desintegration theorem in the category context - {\cite[p. 163]{generictsankov}}]{}\label{comeagerfunction}

If $Z$ and $Y$ are Polish spaces and $f:Z \to Y$ is continuous and comeager (i.e. the preimage of every comeager set is a comeager set), then a set $S$ with Baire property is comeager in $Z$ if and only if $S \cap f^{-1}(y)$ is comeager in $f^{-1}(y)$ for comeager many $y$ in $Y$.
\end{theo}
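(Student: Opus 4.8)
The plan is to reduce both implications to a single fiberwise density statement and to exploit the Baire property decomposition of $S$. Write $S = U \triangle M$ with $U$ open and $M$ meager. Since $f^{-1}(y)$ is closed in $Z$, hence Polish, category in the fibers is well behaved, and for an open set density in a fiber coincides with comeagerness in that fiber. The heart of the argument is the following fiberwise density lemma, which I would prove first: \emph{if $V \subseteq Z$ is open and dense, then for comeager many $y \in Y$ the trace $V \cap f^{-1}(y)$ is dense in $f^{-1}(y)$.} This single lemma handles both open sets and meager sets at once: applying it to $V = Z \setminus N$ for $N$ closed nowhere dense shows that $N \cap f^{-1}(y)$ is nowhere dense in $f^{-1}(y)$ for comeager many $y$, and a countable intersection then yields that $M \cap f^{-1}(y)$ is meager in $f^{-1}(y)$ for comeager many $y$.

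The engine behind the lemma is an elementary consequence of the comeager hypothesis: \emph{if $W \subseteq Z$ is nonempty open, then $f(W)$ is non-meager in $Y$.} Indeed, were $f(W)$ meager, its complement would be comeager, so $f^{-1}(Y \setminus f(W))$ would be comeager in $Z$ by hypothesis; but this preimage is disjoint from $W$, forcing the nonempty open set $W$ to be meager, which is impossible in a Baire space. To prove the lemma itself, fix a countable basis $\{B_n\}$ of $Z$. The set of $y$ for which $V \cap f^{-1}(y)$ fails to be dense in $f^{-1}(y)$ is $\bigcup_n E_n$ with $E_n = f(B_n) \setminus f(B_n \cap V)$, so it suffices to show each $E_n$ is meager. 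Each $E_n$ has the Baire property, being the difference of two analytic sets, so if it were non-meager it would be comeager in some nonempty open $O \subseteq Y$; then $O \cap f(B_n \cap V) \subseteq O \setminus E_n$ would be meager. On the other hand, choosing $y_0 \in E_n \cap O \subseteq f(B_n)$ gives $B_n \cap f^{-1}(O) \ne \emptyset$, and since $V$ is dense the open set $B_n \cap V$ is dense in $B_n$, so $(B_n \cap V) \cap f^{-1}(O)$ is a nonempty open subset of $Z$; by the non-meager-image fact its image is non-meager and contained in $f(B_n \cap V) \cap O$, a contradiction. I expect this lemma, and specifically the meagerness of each $E_n$, to be the main obstacle, as it is exactly where the comeager hypothesis on $f$ and the Baire property of the analytic images $f(B_n \cap V)$ are both essential.

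With the lemma in hand the forward direction closes quickly. If $S$ is comeager then $U^c = S^c \triangle M$ is a closed meager set, hence nowhere dense, so $U$ is open dense; the lemma gives $U \cap f^{-1}(y)$ comeager in the fiber for comeager many $y$, while the meager reduction gives $M \cap f^{-1}(y)$ meager in the fiber for comeager many $y$. Intersecting these two comeager sets of good $y$ and writing $S \cap f^{-1}(y) = (U \cap f^{-1}(y)) \triangle (M \cap f^{-1}(y))$ shows $S \cap f^{-1}(y)$ is comeager in $f^{-1}(y)$ for comeager many $y$.

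For the converse, suppose $S \cap f^{-1}(y)$ is comeager in the fiber for comeager many $y$. Removing the meager contribution of $M$ as above, the identity $U = S \triangle M$ shows that $U \cap f^{-1}(y)$ is dense in $f^{-1}(y)$ for all $y$ in some comeager $G \subseteq Y$. If $U$ were not dense in $Z$, pick a nonempty open $W$ with $W \cap U = \emptyset$; then for each $y \in G$ the set $W \cap f^{-1}(y)$ is open in $f^{-1}(y)$ and disjoint from the dense set $U \cap f^{-1}(y)$, hence empty, so $W \cap f^{-1}(G) = \emptyset$. But $f^{-1}(G)$ is comeager by the hypothesis on $f$, hence dense, contradicting that $W$ is nonempty open. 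Therefore $U$ is dense, so $U$ is comeager, and since $M$ is meager, $S$ is comeager. This completes both directions.
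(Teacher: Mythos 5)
The paper does not prove this theorem at all: it is imported verbatim from the cited reference, so there is no in-paper argument to compare against. Your proof is a correct, self-contained derivation along the standard Kuratowski--Ulam lines for category-preserving maps. The two load-bearing points both check out: (1) the ``engine'' that $f(W)$ is non-meager for nonempty open $W$ is exactly the right way to use the hypothesis that preimages of comeager sets are comeager; (2) in the fiberwise density lemma, the sets $E_n=f(B_n)\setminus f(B_n\cap V)$ are differences of analytic sets, hence have the Baire property, so the localization ``non-meager with BP $\Rightarrow$ comeager on some nonempty open $O$'' applies, and the contradiction via the non-meager image of the nonempty open set $(B_n\cap V)\cap f^{-1}(O)$ is valid. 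The reductions are also sound: a closed meager set is nowhere dense, so the open part $U$ of $S=U\,\triangle\,M$ is dense when $S$ is comeager; meager sets reduce to closed nowhere dense ones by taking closures; symmetric difference distributes over intersection with a fiber; and the converse correctly uses that $f^{-1}(G)$ is comeager, hence dense, to force density of $U$. Empty fibers are handled vacuously throughout. No gaps.
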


We will apply the Desintegration theorem to the set
$$\mathcal{Z}=\{(\mathcal{C},K) \ | \ \mathcal{C} \in \moa(X), K \in \mathcal{C}\} \subseteq \moa(X) \times \cont(X).$$
$\moa(X)$ is a Polish space, hence, $\moa(X) \times \cont(X)$ is a Polish space. Also, it can be easily observed that $\mathcal{Z}$ is closed. Hence, $\mathcal{Z}$ is a Polish space. Now, note that $\pi_1:\mathcal{Z} \to \moa(X)$ given by the restriction of the projection from the product into the first coordinate is continuous and comeager. To see that it is comeager, note that we only have to show that the preimage of a dense set is dense. If $D \subseteq \moa(X)$ is dense, then
$$\pi_1^{-1}(D) = \{(\mathcal{C},K) \in \mathcal{Z} \ | \ \mathcal{C} \in D\},$$
so given any $(\mathcal{C}',K') \in \mathcal{Z}$ and $\varepsilon>0$, there is $\mathcal{C} \in D$ for which $h_2(\mathcal{C}',\mathcal{C})<\varepsilon$. Hence, some $K \in \mathcal{C}'$ satisfies $h(K,K')<\varepsilon$ and
$$(\mathcal{C},K) \in \pi_1^{-1}(D) \cap B_{h_2}(\mathcal{C}',\varepsilon) \times B_h(K')$$
so $\pi_1^{-1}(D)$ is dense. Therefore, from the Theorem \ref{comeagerfunction}, we have that for every comeager set $S$ in $\mathcal{Z}$ we can associate a comeager set $G$ in $\moa(X)$.

Now, to apply the previous discussion, the idea is to understand when $\pi_2:\mathcal{Z} \to \cont(X)$ is comeager. If $\pi_2^{-1}(G)$ is comeager for a given comeager set $G \subseteq \cont(X)$, we know from the Disintegration Theorem that for comeager many $\mathcal{C} \in \moa(X)$
$$\pi_1^{-1}(\mathcal{C}) \cap \pi_2^{-1}(G)$$
is comeager in
$$\pi_1^{-1}(\mathcal{C}) = \{(\mathcal{C},K) \ | \ K \in \mathcal{C}\} \simeq \mathcal{C}.$$
Therefore, if $X$ is GHEC, we have that GCGHEC holds for $X$ whenever $\pi_2$ is comeager. We will use this to prove an implication when $X$ is hereditarily indecomposable. Recall the following Theorem:

\begin{theo}{\cite[p. 148]{nadlerhyperspaces}}{}
Let $X$ be a continuum. Then $X$ is hereditarily indecomposable if and only if $\cont(X)$ is uniquely arcwise connected.
\end{theo}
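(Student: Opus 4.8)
The statement is classical, and I would prove both implications directly, relying on three standard facts: (a) $X$ is hereditarily indecomposable if and only if any two subcontinua of $X$ that meet are comparable under inclusion; (b) between any two comparable subcontinua $K_1\subseteq K_2$ there is an order arc in $\cont(X)$ (as used already in the preliminaries); and (c) the union operation sending a subcontinuum $\mathcal{A}$ of $\cont(X)$ to $\bigcup\mathcal{A}$ is continuous, so $\bigcup\mathcal{A}$ is again a subcontinuum of $X$. Recall also that, since $\cont(X)$ is always arcwise connected, unique arcwise connectedness amounts exactly to the nonexistence of two distinct arcs sharing the same pair of endpoints.

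For the implication that unique arcwise connectedness forces hereditary indecomposability I would argue contrapositively. If $X$ is not hereditarily indecomposable, fact (a) yields subcontinua $A,B$ with $A\cap B\neq\emptyset$ but with neither contained in the other; fix $p\in A\cap B$ and set $C=A\cup B$, a subcontinuum with $A\subsetneq C$ and $B\subsetneq C$ (both $A,B$ being nondegenerate). Using (b) I build two arcs from $\{p\}$ to $C$: one by concatenating an order arc from $\{p\}$ to $A$ with an order arc from $A$ to $C$, the other analogously through $B$. Each concatenation is a genuine arc, since its lower half consists of subcontinua contained in $A$ (resp. $B$) while its upper half consists of subcontinua containing $A$ (resp. $B$), so the two halves meet only at $A$ (resp. $B$). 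The first arc contains the point $A\in\cont(X)$, while the second does not: every element of the second arc is either $\subseteq B$, hence $\neq A$ as $A\not\subseteq B$, or $\supseteq B$, hence $\neq A$ as $B\not\subseteq A$. Thus there are two distinct arcs with the same endpoints, and $\cont(X)$ is not uniquely arcwise connected.

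For the converse, assume $X$ is hereditarily indecomposable, and reduce uniqueness to two lemmas. First, \emph{order arcs are unique}: if $K_1\subseteq K_2$, then any two subcontinua $C,C'$ with $K_1\subseteq C,C'\subseteq K_2$ both contain $K_1\neq\emptyset$, so by (a) they are comparable; hence $\{C : K_1\subseteq C\subseteq K_2\}$ is a single $\subseteq$-chain, and as the unique maximal such chain it is the unique order arc from $K_1$ to $K_2$. Second — the crux — \emph{every arc in $\cont(X)$ is an order arc}, i.e. a $\subseteq$-chain. Granting this, any arc between two points $A,B$ of $\cont(X)$ is an order arc, which forces $A,B$ to be comparable and identifies the arc with the canonical chain between them; together with arcwise connectedness this gives exactly one arc between any two points.

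The main obstacle is therefore the claim that an arbitrary arc $\{A_t\}_{t\in[0,1]}$ in $\cont(X)$ is a chain. By (a) it fails to be a chain only if some pair is disjoint, say $A_a\cap A_b=\emptyset$ with $a<b$. My plan is to contradict indecomposability of $M=\bigcup_{t\in[a,b]}A_t$, which is a subcontinuum of $X$ by (c). The partial unions $M^-(t)=\bigcup_{u\in[a,t]}A_u$ and $M^+(t)=\bigcup_{u\in[t,b]}A_u$ are subcontinua depending continuously and monotonically on $t$, they satisfy $M^-(t)\cup M^+(t)=M$ and $M^-(t)\cap M^+(t)\supseteq A_t\neq\emptyset$, and they interpolate between the proper subcontinua $A_a,A_b$ and $M$; the aim is to locate a parameter at which \emph{both} pieces are proper, thereby writing $M$ as a union of two proper subcontinua. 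The delicate point, where I expect the real work to lie, is excluding the degenerate situations in which one piece swallows all of $M$ (for instance when $\{A_u\}$ traces out a dense composant, so that a Hausdorff limit of proper subcontinua equals $M$); ruling these out requires a careful analysis of how $A_t$ meets $A_a$ and $A_b$ near the transition parameters, and it is precisely this analysis that is carried out in \cite[p.~148]{nadlerhyperspaces}.
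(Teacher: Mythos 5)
The paper offers no proof of this statement --- it is quoted verbatim from Nadler --- so your attempt can only be judged on its own terms. Your first implication is fine: given a decomposable subcontinuum $A\cup B$ with $A,B$ proper and incomparable, the two concatenated order arcs from $\{p\}$ to $A\cup B$ (through $A$, respectively through $B$) are genuine arcs, and your check that the second one avoids the point $A$ of $\cont(X)$ correctly shows they are distinct. Your uniqueness of order arcs between comparable subcontinua is also essentially right (the cleanest justification: if $D$ lay on one order arc from $K_1$ to $K_2$ but not another $\mathcal{A}$, then $\{C\in\mathcal{A}:C\subseteq D\}$ and $\{C\in\mathcal{A}:D\subseteq C\}$ would disconnect $\mathcal{A}$).

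The converse direction, however, rests on a false lemma. It is \emph{not} true that every arc in $\cont(X)$ is a $\subseteq$-chain when $X$ is hereditarily indecomposable: if $A$ and $B$ are disjoint subcontinua of the pseudo-arc, $\cont(X)$ still contains an arc from $A$ to $B$ (it is arcwise connected), and no arc with incomparable endpoints can be a chain. Concretely, the unique such arc is the order arc from $A$ up to the smallest subcontinuum $A\vee B$ containing $A\cup B$, followed by the order arc from $A\vee B$ down to $B$. Your own $M^{\pm}(t)$ analysis, pushed to completion, discovers exactly this rather than a contradiction: with $M=\bigcup_{t\in[a,b]}A_t$ indecomposable, the closed, monotone sets $\{t:M^-(t)=M\}=[s^*,b]$ and $\{t:M^+(t)=M\}=[a,r^*]$ must cover $[a,b]$, so $s^*\le r^*$; then for $t<s^*$ indecomposability applied to $M=M^-(t)\cup\bigl(\bigcup_{u\in[t,s^*]}A_u\bigr)$ forces $\bigcup_{u\in[t,s^*]}A_u=M$, and letting $t\uparrow s^*$ gives $A_{s^*}=\bigcap_{t<s^*}\bigcup_{u\in[t,s^*]}A_u=M$. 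That is, the arc necessarily passes through its own union and is an ``up--then--down'' concatenation of two order arcs --- a perfectly legitimate non-chain arc, so there is no contradiction to be extracted. The proof you would actually need is that \emph{every} arc from $A$ to $B$ contains, hence equals, the canonical concatenation through $A\vee B$; and deferring ``precisely this analysis'' to the very reference whose theorem you are proving is circular and cannot count as a proof.
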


\begin{lem}{}\label{himoa}
If $X$ is hereditarily indecomposable continua, then every maximal order arc in $\cont(X)$ is of the form
$$\mathcal{A}_x=\{K \in \cont(X) \ | \ x \in K\}$$
that is, $\moa(X) = \{\mathcal{A}_x \ | \ x \in X\}.$
\end{lem}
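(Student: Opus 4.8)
The plan is to prove the stated characterization $\moa(X) = \{\mathcal{A}_x \mid x \in X\}$ by showing two things: first, that each $\mathcal{A}_x$ is genuinely a maximal order arc; and second, that every maximal order arc arises in this way. The key structural fact I would exploit is the theorem just quoted, that hereditary indecomposability of $X$ is equivalent to $\cont(X)$ being uniquely arcwise connected, together with the earlier recalled fact (from \cite[p.\ 111]{nadlerhyperspaces}) that maximal order arcs coincide with maximal linearly ordered subsets of $\cont(X)$ ordered by inclusion.

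For the first direction, I would fix $x \in X$ and verify that $\mathcal{A}_x = \{K \in \cont(X) \mid x \in K\}$ is a maximal order arc. The containment relation certainly linearly orders any order arc, so the content is to see that $\mathcal{A}_x$ is itself totally ordered by inclusion and is maximal with this property. Totality is where hereditary indecomposability enters: if $K_1, K_2 \in \mathcal{A}_x$ both contain $x$, then $K_1 \cap K_2$ is a subcontinuum containing $x$, and I would argue using indecomposability of $K_1 \cup K_2$ (which is itself a subcontinuum, hence indecomposable) that one of $K_1, K_2$ must contain the other; otherwise $K_1 \cup K_2$ would be the union of two proper subcontinua, contradicting indecomposability. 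Since $\mathcal{A}_x$ contains both $\{x\}$ and $X$, once it is shown to be a maximal chain it is a maximal order arc by the recalled equivalence.

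For the second (and more delicate) direction, I would take an arbitrary $\mathcal{C} \in \moa(X)$ and show $\mathcal{C} = \mathcal{A}_x$ for some $x$. Since $\mathcal{C}$ is a maximal order arc it has a root, so $\{x\} = \operatorname{Root}(\mathcal{C}) \in \mathcal{C}$ for some $x \in X$. Every element $K \in \mathcal{C}$ then satisfies $\{x\} \subseteq K$, i.e.\ $x \in K$, giving $\mathcal{C} \subseteq \mathcal{A}_x$. The reverse inclusion is the heart of the matter: having already established that $\mathcal{A}_x$ is itself a chain (a maximal linearly ordered set), and knowing $\mathcal{C} \subseteq \mathcal{A}_x$ with $\mathcal{C}$ maximal among chains, maximality of $\mathcal{C}$ forces $\mathcal{C} = \mathcal{A}_x$.

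I expect the main obstacle to be the totality argument for $\mathcal{A}_x$, namely showing that any two subcontinua through a common point are nested. The clean way is to invoke unique arcwise connectedness of $\cont(X)$: the order arc from $\{x\}$ to $K$ inside $\cont(X)$ is the unique arc joining these two points, and for two members $K_1, K_2$ of $\mathcal{A}_x$ the uniqueness of arcs through $\{x\}$ pins down a single linear order, so I would route the comparability claim through the uniqueness of order arcs rather than through a direct decomposition argument, checking carefully that the unique-arc hypothesis indeed yields nestedness. A subtle point to verify is that $\mathcal{A}_x$ is closed in $\cont(X)$ (so that it is a genuine subcontinuum eligible to be an order arc), which follows since $\{K \mid x \in K\}$ is closed in the Vietoris topology.
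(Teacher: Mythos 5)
Your proposal is correct and follows essentially the same route as the paper: comparability of two members of $\mathcal{A}_x$ via indecomposability of $K_1\cup K_2$, maximality of $\mathcal{A}_x$ as a chain, and then the observation that any maximal order arc with root $x$ sits inside $\mathcal{A}_x$ and must equal it by maximality. The detour you sketch at the end through unique arcwise connectedness of $\cont(X)$ is unnecessary --- the decomposition argument you already gave is complete and is exactly what the paper does.
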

\begin{proof}
We know that $\mathcal{A}_x$ is a maximal linearly ordered subset of $\cont(X)$ for every $x \in X$. Indeed, if for $K_1,K_2 \in \mathcal{A}_x$ we have neither $K_1 \subseteq K_2$ nor $K_2 \subseteq K_1$, it means that $K_1 \cup K_2$ is a decomposable subcontinuum of $X$, a contradiction. Hence $\mathcal{A}_x$ is a maximal order arc. If $\mathcal{A}$ is a maximal order arc containing $\{x\}$, then every element of $\mathcal{A}$ contains $x$, thus $\mathcal{A} \subseteq \mathcal{A}_x$. We claim $\mathcal{A}=\mathcal{A}_x$. If not, then there exists $K_0 \in \mathcal{A}_x \setminus \mathcal{A}$, so $\mathcal{A}$ is not a maximal linearly ordered set, a contradiction. Thus, $\mathcal{A}=\mathcal{A}_x$.
\end{proof}

\begin{lem}{}\label{fiscomeager}
If $X$ is hereditarily indecomposable, the preimage of a dense set by $\pi_2:\mathcal{Z} \to \cont(X)$ is also a dense set.
\end{lem}
\begin{proof}
Given that $X$ is hereditarily indecomposable, it has Kelley's property. Let $O$ be a dense set in $Y$. If $(\mathcal{A}_x,K) \in \mathcal{Z}$ and $B_{h_2}(\mathcal{A}_x,\varepsilon_1) \times B_h(K,\varepsilon_2)$ is an open neighborhood $U$ of $(\mathcal{A}_x,K)$, there are two possibilities: $K \in O$ or $K \not\in O$. If $K \in O$, then $(\mathcal{A}_x,K) \in f^{-1}(O)$ and we are done. If $K \not\in O$, then let $\varepsilon=\min\{\varepsilon_1,\varepsilon_2\}$ and $\delta$ given by Kelley's property for such $\varepsilon$. We know that
$$B_h(K,\delta) \cap O \ne \emptyset,$$
and for $K'$ in the intersection, we have two cases:
\begin{itemize}
    \item $K' \cap K \ne \emptyset$: Since $X$ is hereditarily indecomposable, either $K \subseteq K'$ or $K' \subseteq K$. Hence, by Lemma \ref{himoa}, $K' \in C$ and
    $$(\mathcal{A}_x,K') \in \pi_2^{-1}(O) \cap U.$$
    \item $K' \cap K = \emptyset$: Since $h(K',K)<\delta$, if $\{x\}$ is the starting point of $\mathcal{A}_x$, then there exists $y \in K'$ such that $d(x,y)<\delta$. Therefore, for each $F \in \mathcal{A}_x$ we can find $K_F \in \cont(X)$ containing $y$ such that $h(F,K_F)<\varepsilon$. By Proposition \ref{himoa} the collection $\{K_F\}_{F \in \mathcal{A}_x}$ is contained in the unique maximal order arc starting from $\{y\}$, hence we have that
    $$\sup_{F \in \mathcal{A}_x} \inf_{G \in \mathcal{A}_y} h(F,G)<\varepsilon.$$
    Applying Kelley's property and Lemma \ref{himoa}
    $$\sup_{G \in \mathcal{A}_y} \inf_{F \in \mathcal{A}_x} h(G,F)<\varepsilon$$
    also holds, and $h_2(\mathcal{A}_x,\mathcal{A}_y)<\varepsilon.$ Therefore
    $$(\mathcal{A}_y,K') \in \pi_2^{-1}(O) \cap U.$$
\end{itemize}
With this, we have proven that $U$ contains some point of $f^{-1}(O)$, and thus $f^{-1}(O)$ is dense.
\end{proof}

Note that we have just showed that $\pi_2: \mathcal{Z} \to \cont(X)$ is comeager when $X$ is hereditarily indecomposable. By using the preceding results and related discussion, we have the following proposition:

\begin{prop}{}{}
If $X$ is GHEC and hereditarily indecomposable, then GCGHEC holds for $X$.
\end{prop}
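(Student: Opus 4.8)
The plan is to feed the GHEC hypothesis through the disintegration machinery assembled above, using Lemma~\ref{fiscomeager} to control the fibration of $\mathcal{Z}$ by maximal order arcs. Write $G=\{K\in\cont(X)\mid K\simeq X\}$, which is comeager in $\cont(X)$ by the GHEC assumption (I may assume $X$ is nondegenerate, the degenerate case being vacuous). The first step is to promote Lemma~\ref{fiscomeager} to the assertion that $\pi_2\colon\mathcal{Z}\to\cont(X)$ is a comeager map, i.e. that $\pi_2^{-1}(G)$ is comeager in $\mathcal{Z}$. Since $\pi_2$ is continuous, preimages of open sets are open, while the lemma guarantees that preimages of dense sets are dense; so choosing a dense $G_\delta$ inside $G$ and pulling it back yields a dense $G_\delta$ contained in $\pi_2^{-1}(G)$, whence $\pi_2^{-1}(G)$ is comeager. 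This is exactly where hereditary indecomposability is used, through Kelley's property and the explicit description of $\moa(X)$ in Lemma~\ref{himoa} that underlie Lemma~\ref{fiscomeager}.

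The second step is to apply the Disintegration Theorem (Theorem~\ref{comeagerfunction}) to the continuous comeager map $\pi_1\colon\mathcal{Z}\to\moa(X)$ and to the set $S=\pi_2^{-1}(G)$. The set $S$ has the Baire property because it is comeager, so the hypotheses of the theorem are met, and it follows that $S\cap\pi_1^{-1}(\mathcal{C})$ is comeager in $\pi_1^{-1}(\mathcal{C})$ for comeager many $\mathcal{C}\in\moa(X)$. Identifying the fibre $\pi_1^{-1}(\mathcal{C})=\{(\mathcal{C},K)\mid K\in\mathcal{C}\}$ with $\mathcal{C}$ itself via $(\mathcal{C},K)\mapsto K$, and observing that this identification carries $S\cap\pi_1^{-1}(\mathcal{C})$ onto $\{K\in\mathcal{C}\mid K\simeq X\}$, I would conclude that for comeager many $\mathcal{C}$ the set $\{K\in\mathcal{C}\mid K\simeq X\}$ is comeager in $\mathcal{C}$.

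The last step is to translate this into the GCGHEC condition. Because $X$ is nondegenerate, every $K\simeq X$ is nondegenerate, so
$$\{K\in\mathcal{C}\mid K\simeq X\}\subseteq\{K\in\mathcal{C}\mid K\text{ nondegenerate}\Rightarrow K\simeq X\},$$
and therefore the right-hand set is comeager in $\mathcal{C}$ whenever the left-hand set is. Consequently, for comeager many $\mathcal{C}\in\moa(X)$ one has $\forall^\ast C\in\mathcal{C}$ that $C$ nondegenerate implies $C\simeq X$, which is precisely the definition of GCGHEC for $X$.

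As for the difficulty, the genuine work has already been discharged in Lemma~\ref{fiscomeager}, namely the comeagerness of $\pi_2$, which is the only place hereditary indecomposability intervenes. What remains — verifying the Baire property of $S$, checking the fibre identification, and the elementary set inclusion of the final step — is routine, so I expect no serious obstacle beyond making sure the hypotheses of Theorem~\ref{comeagerfunction} are literally satisfied.
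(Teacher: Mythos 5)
Your proposal is correct and follows essentially the same route as the paper: the paper's ``proof'' of this proposition is exactly the preceding discussion, namely that $\pi_2$ is a comeager map by Lemma \ref{fiscomeager} (preimages of dense sets are dense, plus continuity), so $\pi_2^{-1}(G)$ is comeager in $\mathcal{Z}$, and the Disintegration Theorem applied to $\pi_1$ then yields comeager many chains whose fibers meet $\pi_2^{-1}(G)$ in a comeager set. Your additional remarks on the Baire property of $S$, the fiber identification, and the nondegeneracy of copies of $X$ only make explicit what the paper leaves implicit.
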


We can also work similarly applying the Disintegration Theorem on the other direction, but some attention is required because at first we do not know much about the comeager subset of $\cont(X)$ given by it.

\begin{theo}{}{}
    If $X$ is hereditarily indecomposable and GCHEC holds for it, then $X$ is GHEC.
\end{theo}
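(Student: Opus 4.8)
The plan is to move comeagerness back and forth between $\moa(X)$ and $\cont(X)$ through the space $\mathcal{Z}$, using the two projections $\pi_1$ and $\pi_2$ together with the Disintegration Theorem (Theorem \ref{comeagerfunction}). Set
$$\mathcal{A}=\{K \in \cont(X) \ | \ K \simeq X\} \quad\text{and}\quad \mathcal{A}'=\mathcal{A} \cup \operatorname{Fin}_1(X).$$
Since $\operatorname{Fin}_1(X)$ is closed with empty interior, $\mathcal{A}$ is comeager in $\cont(X)$ if and only if $\mathcal{A}'$ is, so it suffices to prove that $\mathcal{A}'$ is comeager. The set $\mathcal{A}$ is analytic, since being a homeomorphic copy of $X$ is a $\Sigma^1_1$ condition, hence it has the Baire property; consequently $S:=\pi_2^{-1}(\mathcal{A}')$, a continuous preimage of an analytic-plus-closed set, also has the Baire property.

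First I would feed the GCHEC hypothesis into the disintegration along $\pi_1$. By definition of GCHEC, comeagerly many $\mathcal{C} \in \moa(X)$ satisfy that every nondegenerate $C \in \mathcal{C}$ is homeomorphic to $X$; for such a chain every element is either degenerate or a copy of $X$, so $\mathcal{C} \subseteq \mathcal{A}'$ and therefore $S \cap \pi_1^{-1}(\mathcal{C}) = \pi_1^{-1}(\mathcal{C})$ is trivially comeager in the fiber $\pi_1^{-1}(\mathcal{C}) \simeq \mathcal{C}$. (This is exactly why one passes to $\mathcal{A}'$ rather than $\mathcal{A}$: the root $\{x\}$ of $\mathcal{C}$ is degenerate and thus never lies in $\mathcal{A}$.) Since $\pi_1$ is continuous and comeager, as recorded before Theorem \ref{comeagerfunction}, and $S$ has the Baire property, the "if" direction of Theorem \ref{comeagerfunction} yields that $S$ is comeager in $\mathcal{Z}$.

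Next I would disintegrate the same set $S$ along $\pi_2$, now exploiting hereditary indecomposability. Lemma \ref{fiscomeager} together with the continuity of $\pi_2$ shows that $\pi_2$ is comeager, and Lemma \ref{himoa} identifies the maximal order arcs as the sets $\mathcal{A}_x=\{K \ | \ x \in K\}$, so that $\pi_2^{-1}(K)=\{(\mathcal{A}_x,K) \ | \ x \in K\}$ is a nonempty space, homeomorphic to $K$, for every $K \in \cont(X)$. Applying the "only if" direction of Theorem \ref{comeagerfunction} to the comeager set $S$ gives that for comeager many $K$ the slice $S \cap \pi_2^{-1}(K)$ is comeager in $\pi_2^{-1}(K)$. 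The decisive observation is that this slice is all-or-nothing: if $K \in \mathcal{A}'$ then $\pi_2^{-1}(K) \subseteq S$ and the slice is the whole nonempty fiber, whereas if $K \notin \mathcal{A}'$ then $S \cap \pi_2^{-1}(K)=\emptyset$, which is never comeager in the nonempty Baire space $\pi_2^{-1}(K)$. Hence the set of $K$ for which the slice is comeager is precisely $\mathcal{A}'$, and we conclude that $\mathcal{A}'$, and therefore $\mathcal{A}$, is comeager in $\cont(X)$, i.e. $X$ is GHEC.

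The main obstacle, and essentially the only place where content beyond bookkeeping enters, is the comeagerness of $\pi_2$; this is where hereditary indecomposability is genuinely used, through Kelley's property and the description of $\moa(X)$ in Lemma \ref{himoa}, and it has already been established in Lemma \ref{fiscomeager}. The remaining care is set-theoretic hygiene: checking that $\mathcal{A}$, and hence $S$, has the Baire property so that the Disintegration Theorem applies, and observing that every $\pi_2$-fiber is nonempty, so that an empty slice cannot be comeager.
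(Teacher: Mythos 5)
Your proof is correct and follows essentially the same route as the paper: both arguments work in $\mathcal{Z}$, use the comeagerness of $\pi_1$ and of $\pi_2$ (the latter from Lemma \ref{fiscomeager}, which is where hereditary indecomposability enters), and apply Theorem \ref{comeagerfunction}. The only difference is bookkeeping: the paper lifts the GCHEC set to $\mathcal{Z}$ via $\pi_1^{-1}$, where it is comeager for free (so no Baire-property check is needed) and concludes from mere nonemptiness of the slices $\pi_2^{-1}(K)\cap\scr{C}$, whereas you carry $\mathcal{A}'=\mathcal{A}\cup\operatorname{Fin}_1(X)$ through both disintegrations, at the cost of the extra (correct) observation that $\mathcal{A}$ is analytic and hence has the Baire property.
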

\begin{proof}
    Since GCHEC holds for $X$, we know that
    $$\scr{C}=\{(\mathcal{C},K) \in \mathcal{Z} \ | \ \forall K' \in \mathcal{C} \text{ nondegenerate }K' \simeq X\}$$
    is comeager in $\mathcal{Z}$ since $\pi_1$ is a comeager function. Given that $X$ is hereditarily indecomposable, $\pi_2$ is also comeager by Lemma \ref{fiscomeager}, and we can apply the Disintegration Theorem (\ref{comeagerfunction}) to find out that for comeager many $K \in \cont(X)$, $\pi^{-1}_2(K) \cap \scr{C}$ is comeager in $\pi^{-1}_2(K)$. Thus, for any such nondegenerate $K$, $\pi^{-1}_2(K)$ is nonempty, therefore $K \simeq X$ and $X$ is GHEC.
\end{proof}

Therefore, the following diagram holds:

\begin{center}
\begin{tikzcd}[column sep = 4cm]
\text{GCGHEC} & \text{GHEC} \arrow[l,Rightarrow,swap,"\text{hereditarily indecomposable}"]& \text{GCHEC} \arrow[ll,Rightarrow,bend right] \arrow[l,Rightarrow,swap,"\text{hereditarily indecomposable}"]\\
& \text{HEC} \arrow[u,Rightarrow] \arrow[lu,Rightarrow] \arrow[ru,Rightarrow] &
\end{tikzcd}
\end{center}

\section{Generic maximal chains of $W_M$}

\subsection{Properties of generic chains of $MOA(W_M)$}
We know that $W_M$ is GHEC, so it is natural to ask whether GCGHEC holds for $W_M$. We will show that GCGHEC holds, and explore some properties to which the arcs can be restricted and still have a comeager collection of maximal order arcs.

\begin{prop}\label{WMGCGHEC}
GCGHEC holds for $W_M$. Moreover, the collection of maximal order arcs $\scr{C}$ such that
$$\mathcal{C} \in \scr{C} \iff \forall^\ast K \in \mathcal{C}(K\in \full(W_M))$$
is comeager in $\moa(W_M)$.
\end{prop}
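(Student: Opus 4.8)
The plan is to deduce the stronger ``moreover'' statement and observe that it implies GCGHEC. Indeed, by Theorem \ref{generalizeddendrite} every $K \in \full(W_M)$ is a nondegenerate continuum homeomorphic to $W_M$, so $\full(W_M) \subseteq \{K \in \cont(W_M) : K \simeq W_M\}$; hence if $\forall^\ast K \in \mathcal{C}\,(K \in \full(W_M))$, then a fortiori $\forall^\ast K \in \mathcal{C}$ ($K$ nondegenerate implies $K \simeq W_M$), and the set of such $\mathcal{C}$ being comeager is exactly GCGHEC. To obtain the ``moreover'' statement I would run the disintegration machinery already set up for $\mathcal{Z} = \{(\mathcal{C},K) : \mathcal{C}\in\moa(W_M),\, K\in\mathcal{C}\}$, recalling that $\pi_1 : \mathcal{Z} \to \moa(W_M)$ is continuous and comeager. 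Taking $G = \full(W_M)$, which is comeager in $\cont(W_M)$ by Proposition \ref{fullcomeager}, the whole argument reduces to proving that $\pi_2 : \mathcal{Z} \to \cont(W_M)$ is comeager. Granting this, $\pi_2^{-1}(\full(W_M))$ is comeager in $\mathcal{Z}$, and applying Theorem \ref{comeagerfunction} to the comeager map $\pi_1$ gives that for comeager many $\mathcal{C}$ the fibre intersection $\pi_1^{-1}(\mathcal{C}) \cap \pi_2^{-1}(\full(W_M))$ is comeager in $\pi_1^{-1}(\mathcal{C}) \cong \mathcal{C}$, which is precisely $\forall^\ast K \in \mathcal{C}\,(K \in \full(W_M))$.

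So the heart of the proof is to show $\pi_2$ is comeager; since $\pi_2$ is continuous it suffices to show $\pi_2^{-1}(O)$ is dense for every dense open $O \subseteq \cont(W_M)$. This is the analog of Lemma \ref{fiscomeager}, but $W_M$ is not hereditarily indecomposable and its maximal order arcs are not of the simple form $\mathcal{A}_x$, so I cannot reuse that argument and must instead build a perturbed chain by hand. Given $(\mathcal{C},K) \in \mathcal{Z}$ and $\varepsilon>0$, parametrize $\mathcal{C}$ as an increasing family $\{C_t\}_{t\in[0,1]}$ with $C_0 = \{\Root(\mathcal{C})\}$, $C_1 = W_M$ and $K = C_s$, and choose $K' \in O$ with $h(K,K') < \delta$ for a small $\delta$ to be fixed. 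The goal is a maximal order arc $\mathcal{C}' \ni K'$ with $h^2(\mathcal{C},\mathcal{C}') < \varepsilon$; then $(\mathcal{C}',K') \in \pi_2^{-1}(O)$ lies in the prescribed neighbourhood of $(\mathcal{C},K)$.

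I would assemble $\mathcal{C}'$ from three explicit pieces. Fix a convex metric on $W_M$ (every Peano continuum, in particular $W_M$, admits one), which is harmless since category is topological; then the first point map $r = r_{W_M,K'}$ satisfies $d(p,r(p)) \le d(p,K')$, so $d(p,r(p)) < \delta$ for every $p \in K$. The \emph{lower part} is $\{r(C_t)\}_{t\in[0,s]}$, an order arc inside $K'$ running from $\{r(\Root(\mathcal{C}))\}$ to $r(K)$ with $h(C_t, r(C_t)) < \delta$; the \emph{upper part} is $\{C_t \cup K'\}_{t\in[s,1]}$, an order arc from $K \cup K'$ to $W_M$ with $h(C_t, C_t\cup K') < \delta$ (here I use $K = C_s \subseteq C_t$ for $t\ge s$, so the only new points come from $K'$ and lie within $\delta$ of $C_t$). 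Finally I glue in two short connecting order arcs, one from $r(K)$ up to $K'$ inside $K'$ and one from $K'$ up to $K\cup K'$, whose members $D$ satisfy $r(K) \subseteq D \subseteq K\cup K'$ and hence $h(D,K) < \delta$. The concatenation is a maximal order arc from $\{r(\Root(\mathcal{C}))\}$ to $W_M$ containing $K'$, and each of its members lies within $\delta$ of some $C_t$ while each $C_t$ lies within $\delta$ of a member of $\mathcal{C}'$; thus $h^2(\mathcal{C},\mathcal{C}') \le \delta$, and taking $\delta \le \varepsilon$ finishes the density argument. The degenerate cases $s=0$ and $s=1$, where one of the parts is absent, are handled the same way.

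The step I expect to be the main obstacle is transferring the lower portion of $\mathcal{C}$ --- which lives inside $K$ --- into a nearby order arc inside $K'$ while keeping Hausdorff distances under control; this is exactly what the first point retraction combined with a convex metric accomplishes, and it is the place where the dendrite structure of $W_M$ substitutes for the hereditary indecomposability used in Lemma \ref{fiscomeager}. A secondary point requiring care is checking that each glued piece is genuinely an order arc and that the concatenation is linearly ordered and continuous, so that $\mathcal{C}' \in \moa(W_M)$; this follows from monotonicity and continuity of $r$ and of $A \mapsto A \cup K'$, together with the standard existence of order arcs between nested continua \cite{nadlerhyperspaces}.
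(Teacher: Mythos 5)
Your overall strategy is sound and in fact more ambitious than the paper's: you aim to show that $\pi_2:\mathcal{Z}\to\cont(W_M)$ is a comeager map, so that the preimage of \emph{any} comeager subset of $\cont(W_M)$, in particular $\full(W_M)$, is comeager in $\mathcal{Z}$. The paper proves less: it only shows that $\pi_2^{-1}(\full(W_M))$ is comeager, by writing the complement of $\full(W_M)$ as the countable union of the explicit closed nowhere dense sets $\scr{M}(b,\ell)$ together with $\operatorname{Fin}_1(W_M)$ and checking that each preimage under $\pi_2$ has empty interior. The paper's perturbation is correspondingly simpler: it replaces $K$ by $K'=K\cup[b,y]$ for a short arc reaching into the missed component, so the new continuum \emph{contains} $K$ and the modified chain is assembled from nested pieces with no connectivity worries. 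Your reduction to $\pi_1$ via the Disintegration Theorem, and your opening observation that the ``moreover'' clause implies GCGHEC via Theorem \ref{generalizeddendrite}, are both correct, as are the Hausdorff estimates for the lower part via the first point map and a convex metric.

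The gap is in the density argument for $\pi_2^{-1}(O)$: the upper part $\{C_t\cup K'\}_{t\ge s}$ and the connecting order arc from $K'$ to $K\cup K'$ presuppose that $K\cup K'$ is connected, i.e.\ that $K\cap K'\neq\emptyset$, and $h(K,K')<\delta$ does not guarantee this. (This is precisely the case that Lemma \ref{fiscomeager} must treat separately via the Kelley property in the hereditarily indecomposable setting you invoke as your model.) For nondegenerate $K$ the problem disappears if you additionally take $\delta<\operatorname{diam}(K)/2$: if $K'\cap K=\emptyset$, then $K'$ lies in a single component of $W_M\setminus K$ whose closure meets $K$ in one point $p$, and with a convex metric every arc from $q\in K$ into that component passes through $p$, so $d(q,K')\ge d(q,p)$ and hence $h(K,K')\ge\operatorname{diam}(K)/2$. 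But when $K=\{\Root(\mathcal{C})\}$ is the degenerate element (the case $s=0$, which you claim is ``handled the same way''), a nearby $K'\in O$ need not contain the root and the construction breaks as written. The repair is easy in a dendrite: insert the irreducible arc $[a,b]$ from $K'$ to $K$, which in a convex metric has length at most $h(K,K')<\delta$, and use $K\cup[a,b]\cup K'$ and $C_t\cup[a,b]\cup K'$ throughout the upper pieces; but this step must be stated. With that amendment your argument goes through and yields a statement strictly stronger than the one the paper proves.
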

\begin{proof}
It suffices to prove the second statement. We will use the Disintegration Theorem in the category context (Theorem \ref{comeagerfunction}). Let
$$\mathcal{Z} = \{(\mathcal{C},K) \in \moa(W_M) \times \cont(W_M) \ | \ K \in \mathcal{C}\}.$$
We start proving that the collection
$$\mathcal{Z}' = \{(\mathcal{C},K) \in \mathcal{Z} \ | \ K \in \full(W_M)\}$$
is comeager in $\mathcal{Z}$. The projection $\pi_2 : \mathcal{Z} \to \cont(W_M)$ is continuous, thus, given that
\begin{align*}
    \full(W_M) &= \left\{K \in \cont(W_M) \ \left| \ \begin{array}{c}K \simeq W_M \text{ and if } b \in \mathcal{B}(W_M) \cap K,\\ \text{ then } b \in \mathcal{B}(K) \text{ and it is maximal in } W_M\end{array}\right.\right\}\\
    & = \cont(W_M) \setminus\left( \left( \bigcup_{b \in \scr{B}(W_M)} \bigcup_{\ell} \scr{M}(b,\ell) \right) \cup \operatorname{Fin}_1(W_M) \right),
\end{align*}
and that the sets $\mathcal{M}(b,\ell)$ and $\operatorname{Fin}_1(W_M)$ are nowhere dense and closed, we will prove that the preimage of these sets by the projection $\pi_2 : \mathcal{Z} \to \cont(X)$ has an empty interior, so from continuity of $\pi_2$
$$\mathcal{Z}' = \pi_2^{-1}(\full(W_M))$$
will be comeager in $\mathcal{Z}$.

In the case of $\operatorname{Fin}_1(W_M)$, its preimage is the collection of pairs $(\mathcal{C},\{x\})$ where $x \in X$ and $\mathcal{C}$ is a maximal order arc starting at $\{x\}$. Clearly this collection has an empty interior since $\operatorname{Fin}_1(W_M)$ is has an empty interior in $\cont(W_M)$. Now, given $b \in \mathcal{B}(W_M)$ and the $\ell$-th connected component of $W_M \setminus \{b\}$, let $(\mathcal{C},K) \in \pi_2^{-1}(\scr{M}(b,\ell))$. The goal is to find for every $\varepsilon>0$ a chain $\mathcal{C}'$ and $K' \in \cont(W_M)$ such that \begin{enumerate}
    \item $(\mathcal{C}',K') \in \pi_2^{-1}(\scr{M}(b,\ell))^c$;
    \item $h^2(\mathcal{C},\mathcal{C}')<\varepsilon$;
    \item $h(K,K')<\varepsilon$;
    \item $K' \not\in \scr{M}(b,\ell)$.
\end{enumerate}
Given that $b$ is a limit point of the $\ell$-th component of $W_M \setminus \{b\}$ and $W_M$ is a Peano continuum, for any $\varepsilon>0$ there exists $y$ in the $\ell$-th component such that $d(b,[y,b])<\varepsilon$. Hence, $K'=K \cup [b,y] \not\in \scr{M}(b,\ell)$ and $h(K,K')<\varepsilon$ so we define an order arc $[K,K']:[0,1] \to \cont(W_M)$ given by
$$[K,K'](t) = K \cup (\cup_{s \leq t}[b,y](s)).$$
We also define an order arc $[K',W_M]$ as the image of
$$[K',W_M](t) = K' \cup [\{x\},W_M](t) \text{ for } t \in [0,1].$$
Then,
$$\mathcal{C}' = [\{x\},K]\cup [K,K'] \cup [K',W_M]$$
is such that $h^2(\mathcal{C},\mathcal{C}')<\varepsilon$ since for every $Y \in \mathcal C'$ there exists $X \in \mathcal{C}$ such that $Y \subseteq X$ and $Y \setminus X \subseteq [b,y]$. By construction $(\mathcal{C}',K') \in \pi_2^{-1}(\scr{M}(b,\ell))^c$ and we are done. This means that $\mathcal{Z}'$ is comeager.

Since $\mathcal{Z}'$ is a comeager subset of the Polish space $\mathcal{Z}$, the Disintegration Theorem implies the existence of a comeager family $\mathcal{Y}$ of maximal order arcs such that whenever $\mathcal{C} \in \mathcal{Y}$
$$\pi_1^{-1}(\mathcal{C}) \cap \mathcal{Z}' = \{(\mathcal{C},K) \ | \ K \in \scr{C}'\}$$
is comeager in $\pi_1^{-1}(\mathcal{C})$, that is,
$$\{K \in \mathcal{C} \ | \ K \in \mathcal{C}' \}$$
is comeager in $\mathcal{C}$. Therefore, the claim of the theorem holds and GCGHEC holds for $W_M$.
\end{proof}

\begin{lem}\label{arcsinclosedendrites}
Let $X$ be a dendrite, $K$ be a nondegenerate subdendrite of $X$, $[x,y]$ and arc in $K$ from $x$ to $y$, and $a_1 \ne a_2 \in (x,y)$. If
$$0<\varepsilon< \{d(a_1,a_2),d(x,a_1),d(x,a_2), d(y,a_1), d(y,a_2), \operatorname{diam}(K)/3 \},$$
then there exists $0<\delta<\varepsilon$ such that for every $K' \in \cont(X)$ with $h(K,K')<\delta$ there exists an arc $[a_1',a_2'] \subseteq [a_1,a_2] \cap K'$.
\end{lem}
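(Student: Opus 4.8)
The plan is to produce the required subarc as the image of a first point retraction onto $[a_1,a_2]$. Let $\rho = r_{X,[a_1,a_2]} \colon X \to [a_1,a_2]$ be the first point map onto the subarc $[a_1,a_2]\subseteq[x,y]\subseteq K$, which is continuous. Since $\rho(a_1)=a_1$ and $\rho(a_2)=a_2$, continuity of $\rho$ at $a_1$ and $a_2$ lets me fix $\eta$ with $0<\eta<\tfrac12 d(a_1,a_2)$ and then choose $\delta$ with $0<\delta<\varepsilon$ so small that $d(p,a_i)<\delta$ implies $d(\rho(p),a_i)<\eta$ for $i\in\{1,2\}$.

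Now suppose $K'\in\cont(X)$ with $h(K,K')<\delta$. As $a_1,a_2\in[x,y]\subseteq K$, the Hausdorff bound gives points $p_1,p_2\in K'$ with $d(p_i,a_i)<\delta$, so $c_i:=\rho(p_i)\in[a_1,a_2]$ satisfies $d(c_i,a_i)<\eta$. In particular $d(c_1,c_2)\ge d(a_1,a_2)-2\eta>0$, hence $c_1\ne c_2$. Since $K'$ is a subcontinuum of the dendrite $X$ it is itself a subdendrite, so the unique arc $[p_1,p_2]$ of $X$ is contained in $K'$. The heart of the argument is then the claim that
$$[c_1,c_2]\subseteq[p_1,p_2];$$
granting this, $[c_1,c_2]$ is a nondegenerate arc contained in $[a_1,a_2]\cap K'$ (it lies in $[a_1,a_2]$ because $c_1,c_2\in[a_1,a_2]$), and taking $a_1'=c_1$, $a_2'=c_2$ finishes the proof.

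To prove the claim I would use the defining properties of the first point map: $[p_i,c_i]\cap[a_1,a_2]=\{c_i\}$ for $i\in\{1,2\}$, together with $c_1\ne c_2$. From these I would show that the three arcs $[p_1,c_1]$, $[c_1,c_2]$, $[c_2,p_2]$ meet only in the shared endpoints $c_1$ and $c_2$: the overlaps of $[p_1,c_1]$ and $[c_2,p_2]$ with $[c_1,c_2]\subseteq[a_1,a_2]$ are forced to be $\{c_1\}$ and $\{c_2\}$ by the first property, while a common point $w$ of $[p_1,c_1)$ and $[p_2,c_2)$ would satisfy $[w,c_1]\cap[a_1,a_2]=\{c_1\}$ and $[w,c_2]\cap[a_1,a_2]=\{c_2\}$, yet $[c_1,w]\cup[w,c_2]$ is a subcontinuum containing $c_1,c_2$, hence contains the arc $[c_1,c_2]$; since the only points of $[c_1,w]\cup[w,c_2]$ lying on $[a_1,a_2]$ are $c_1$ and $c_2$, this forces $[c_1,c_2]=\{c_1,c_2\}$, contradicting $c_1\ne c_2$. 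Therefore $[p_1,c_1]\cup[c_1,c_2]\cup[c_2,p_2]$ is an arc from $p_1$ to $p_2$, which by unique arcwise connectedness of $X$ equals $[p_1,p_2]$, giving $[c_1,c_2]\subseteq[p_1,p_2]\subseteq K'$.

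The main obstacle is exactly this claim, and its only delicate point is ruling out that the branches $[p_1,c_1]$ and $[p_2,c_2]$ share a point off $[a_1,a_2]$; this is where $c_1\ne c_2$ is essential, which in turn is secured by the bound $\eta<\tfrac12 d(a_1,a_2)$ guaranteed by the constraints on $\varepsilon$ (in fact only $\varepsilon<d(a_1,a_2)$ is used). Everything else — the existence of $p_1,p_2$ from Hausdorff closeness, continuity of $\rho$, and the containment $[p_1,p_2]\subseteq K'$ — is routine.
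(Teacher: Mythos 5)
Your proof is correct. The overall strategy is the one the paper uses --- pick points $p_1,p_2\in K'$ within $\delta$ of $a_1,a_2$, observe that the unique arc $[p_1,p_2]$ lies in the subdendrite $K'$, and show that this arc meets $[a_1,a_2]$ in a nondegenerate subarc --- but your execution of the key step is different. The paper takes small disjoint connected neighborhoods $U_i\subseteq B(a_i,\varepsilon/2)$, runs the arc $[z_1,z_2]$ through $[z_1,a_1]\cup[a_1,a_2]\cup[a_2,z_2]$, and then invokes hereditary unicoherence to see that $[z_1,z_2]\cap[a_1,a_2]$ is a subcontinuum of an arc, with nondegeneracy coming from the triangle inequality and the bound $\varepsilon<d(a_1,a_2)$. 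You instead identify the intersection arc explicitly as $[c_1,c_2]$ with $c_i=r_{X,[a_1,a_2]}(p_i)$, getting nondegeneracy from continuity of the first point map (with $\eta<\tfrac12 d(a_1,a_2)$) and the containment $[c_1,c_2]\subseteq[p_1,p_2]$ from an explicit three-piece decomposition of $[p_1,p_2]$, whose verification correctly exploits $[p_i,c_i]\cap[a_1,a_2]=\{c_i\}$. What your route buys: you never need hereditary unicoherence, you name the endpoints $a_i'=c_i$ of the produced subarc, and in fact none of the hypotheses on $\varepsilon$ are used beyond positivity (the paper's proof likewise only needs $\varepsilon<d(a_1,a_2)$; the remaining bounds are there for the later application). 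What it costs: you must know that the first point retraction onto an arc in a dendrite is continuous, a standard fact the paper states only implicitly, whereas the paper's argument is self-contained given hereditary unicoherence. Both proofs are sound.
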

\begin{proof}
Let $U_i \subseteq B(a_i,\varepsilon/2)$ be a connected open neighborhood of $a_i$ in $X$ for $i=1,2$ and fix $0<\delta<\varepsilon/2$ such that $B(a_i,\delta) \subseteq U_i$. Given $K' \in B_h(K,\delta)$, there exist $z_1,z_2 \in K'$ with $d(a_1,z_1),d(a_2,z_2) < \delta$. Now, $X$ is arcwise connected and each $U_i$ is connected, so there are arcs $[z_i,a_i] \subseteq U_i$. Moreover,
$$[z_1,a_1] \cap [z_2,a_2] = \emptyset$$
for $i \ne j$, since $U_1 \cap U_2 = \emptyset$. Hence,
$$[z_1,a_1] \cup [a_1,a_2] \cup [a_2,z_2] \subseteq K'$$
contains the arc $[z_1,z_2] \subseteq X$ which is unique because $X$ does not contain closed curves. Every subcontinuum of a dendrite is a dendrite, thus $[z_1,z_2] \subseteq K'$ and we claim that $[z_1,z_2] \cap [a_1,a_2]$ contains an arc.

Indeed, given that $[z_1,a_1] \cup [z_2,a_2]$ is disconnected, $[z_1,z_2] \cap [a_1,a_2]$ is nonempty, and it is nondegenerate, otherwise, if $[z_1,z_2] \cap [a_1,a_2] = \{w\}$,
$$d(a_1,a_2)\leq d(a_1,w)+d(w,a_2) = \varepsilon/2 + \varepsilon/2 = \varepsilon,$$
which is a contradiction. Dendrites are hereditarily unicoherent, hence $[z_1,z_2] \cap [a_1,a_2]$ is a non-degenerate subcontinuum of $[z_1,z_2]$, hence an arc.
\end{proof}

\begin{lem}\label{limitofWMisWM}
If $\mathcal{C} \in \moa(W_M)$ and the collection of $K \in \mathcal{C}$ with $K \simeq W_M$ is dense in $\mathcal{C}$, then every nondegenerate element of $\mathcal{C}$ is homeomorphic to $W_M$.
\end{lem}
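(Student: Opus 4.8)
The plan is to verify that any nondegenerate $K_0\in\mathcal C$ satisfies the standard topological characterization of $W_M$: a dendrite $D$ is homeomorphic to $W_M$ if and only if every branching point of $D$ has order in $M$ and, for each $m\in M$, the branching points of order $m$ are arcwise dense in $D$. If $K_0=W_M$ we are done, so I assume $K_0=\mathcal C(t_0)$ with $t_0\in(0,1)$ in a monotone parametrization $\mathcal C:[0,1]\to\cont(W_M)$ of the order arc (with $\mathcal C(0)$ the root and $\mathcal C(1)=W_M$).

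First I would build a two-sided approximation. Since $\{K\in\mathcal C:K\simeq W_M\}$ is dense in $\mathcal C$ and the parametrization is continuous and monotone (so $\mathcal C(t_0)=\overline{\bigcup_{s<t_0}\mathcal C(s)}=\bigcap_{s>t_0}\mathcal C(s)$), I can choose parameters $s_n\uparrow t_0$ and $t_n\downarrow t_0$ with $L_n:=\mathcal C(s_n)\simeq W_M$ and $K_n:=\mathcal C(t_n)\simeq W_M$. These satisfy $L_n\subseteq K_0\subseteq K_n$, moreover $\overline{\bigcup_n L_n}=K_0=\bigcap_n K_n$, and $h(L_n,K_0),h(K_n,K_0)\to 0$.

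Next, the order condition. I would first show every branching point of $K_0$ lies in $\bigcup_n L_n$: if $c\in K_0\setminus\bigcup_n L_n$ had two distinct branches (components of $K_0\setminus\{c\}$), each being open and nonempty in $K_0$ would meet the dense set $\bigcup_n L_n$; taking $z_1,z_2$ in two of these branches with $z_1,z_2\in L_n$ for a common $n$, the unique arc $[z_1,z_2]\subseteq L_n$ would pass through $c$, forcing $c\in L_n$, a contradiction. Hence such a $c$ is an endpoint. For $c\in\bigcup_n L_n$, each branch of $K_0$ at $c$ is eventually met by the increasing sequence $L_n$, so $\ord(c,K_0)=\sup_n\ord(c,L_n)$; when this supremum is finite it stabilizes at a value which, being an order attained in $L_n\simeq W_M$, lies in $\{1,2\}\cup M$, and when it equals $\omega$ the inequality $\ord(c,K_n)\geq\ord(c,K_0)=\omega$ forces $\ord(c,K_n)=\omega$, whence $\omega\in M$ because $K_n\simeq W_M$. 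Thus every branching order of $K_0$ lies in $M$.

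Finally, arcwise density of order-$m$ points, which I expect to be the main obstacle. Given an arc $[x,y]\subseteq K_0$ with interior points $a_1\ne a_2$, Lemma \ref{arcsinclosedendrites} provides $\delta>0$ such that any $K'$ with $h(K_0,K')<\delta$ contains a subarc of $[a_1,a_2]$. Applying this to a large $L_n$ and using that $L_n\simeq W_M$ has order-$m$ branching points arcwise dense, I obtain order-$m$ branching points of $L_n$ in $(a_1,a_2)$; as their branches already lie in $L_n\subseteq K_0$, these have order $\ge m$ in $K_0$, so order-$\ge m$ points are arcwise dense (dually, $K_n$ gives order-$\le m$ points arcwise dense). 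When $m=\max M$ this immediately forces order exactly $m$. For intermediate $m$ the difficulty is that such a point $b$ may be \emph{upgraded}, i.e.\ $\ord(b,K_0)>m$, precisely when a component of $K_0\setminus L_n$ is rooted at $b$; conversely, a point $b$ with $\ord(b,L_n)=m$ that is the root of no component of $K_0\setminus L_n$ satisfies $\ord(b,K_0)=\ord(b,L_n)=m$. It therefore suffices to find such non-root order-$m$ points densely in $(a_1,a_2)$. The components of $K_0\setminus L_n$ are countably many with diameters tending to $0$ (since $h(L_n,K_0)\to0$ and the metric is convex along arcs), but controlling their roots finely enough to keep a dense set of order-$m$ points unupgraded is the technical heart of the argument, and is exactly where the density of copies along the entire order arc $\mathcal C$ — not merely near $K_0$ — must be exploited.
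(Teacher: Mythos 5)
Your overall strategy is the paper's: approximate $K_0$ from below by copies $L_n\simeq W_M$ lying in $\mathcal C$, use Lemma \ref{arcsinclosedendrites} to force a nondegenerate subarc of $[a_1,a_2]$ into $L_n$, and import the arcwise-dense branching points of $L_n$. Your additional verification that every branching order of $K_0$ lies in $\{1,2\}\cup M$ (via $\ord(c,K_0)=\sup_n\ord(c,L_n)$ and the outer copies for the order-$\omega$ case) is sound and is in fact a part of the characterization of $W_M$ that the paper's proof does not spell out.

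However, the proposal is not a complete proof: the last step is a genuine gap that you name but do not close. From the inner copies you only get that points of order $\geq m$ (computed in $K_0$) are arcwise dense, and from the outer copies that points of order $\leq m$ are arcwise dense, but these are different points, and nothing you write rules out that on some subarc every order-$m$ branching point of $L_n$ is the attachment point of a component of $K_0\setminus L_n$ and gets upgraded, for every $n$. Your closing sentence explicitly defers this to ``the technical heart of the argument,'' so the argument is unfinished exactly where it becomes nontrivial. For comparison, the paper's own proof simply asserts at this point that the order-$m$ branching points of $K'$ are order-$m$ branching points of $K$, i.e., it takes for free the step you (correctly) hesitate over; the upgrading obstruction vanishes if one knows the dense copies are \emph{full} in $W_M$ (every branching point maximal), since then $m=\ord(b,K')\leq\ord(b,K_0)\leq\ord(b,W_M)=m$, and fullness is what Proposition \ref{WMGCGHEC} actually supplies where this lemma is applied. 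If you want a self-contained proof under the stated hypothesis alone, you must either supply the missing density argument for unupgraded points or strengthen the hypothesis to density of full copies; as written, neither is done.
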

\begin{proof}

Every $K \in \mathcal{C}$ is a dendrite, so we will prove that if $K$ is nondegenerate, then the branching points of all orders $m \in M$ are arcwise dense in $K$ by proving that every arc contains a branching point of order $m$. Fix an arc $[x,y] \subseteq K$ and let $a_1 \ne a_2 \in (x,y)$. For some compatible metric on $W_M$ fix
$$\varepsilon< \{d(a_1,a_2),d(x,a_1),d(x,a_2), d(y,a_1), d(y,a_2), \operatorname{diam}(K)/3,h(K, \Root(\mathcal{C}))\}.$$
From Lemma \ref{arcsinclosedendrites}, there exists $0<\delta<\varepsilon$ for which for any $K' \in \cont(X)$ with $h(K,K')<\delta$ and $K' \subseteq K$ there exists an arc $[a_1',a_2'] \subseteq [a_1,a_2] \cap K'$. Hence, since the collection of $K \in \mathcal{C}$ such that $K \simeq W_M$ is dense in $\mathcal{C}$, we can find $K' \subsetneq K$ homeomorphic to $W_M$ satisfying
$$h(K',K) < \delta.$$
The characterization of $W_M$ implies that $K' \cap [a_1,a_2]$ contains an arc with densely many branching points of order $m$ so $[x,y]$ contains a branching point of order $m$.
\end{proof}

\begin{theo}
GCHEC holds for $W_M$.
\end{theo}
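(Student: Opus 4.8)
The plan is to deduce GCHEC directly from the genericity result in Proposition \ref{WMGCGHEC} together with the rigidity statement in Lemma \ref{limitofWMisWM}. The key observation is that Proposition \ref{WMGCGHEC} already provides, for comeager many chains, that \emph{comeager} many of their elements are full copies of $W_M$; Lemma \ref{limitofWMisWM} then promotes this to the assertion that \emph{every} nondegenerate element is homeomorphic to $W_M$, which is exactly the GCHEC condition. So this statement is essentially a combination of two results already at hand, and no new analytic work is needed.

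First I would invoke Proposition \ref{WMGCGHEC} to obtain a comeager family $\mathcal{Y} \subseteq \moa(W_M)$ such that for every $\mathcal{C} \in \mathcal{Y}$ the set $\{K \in \mathcal{C} \mid K \in \full(W_M)\}$ is comeager in $\mathcal{C}$. Since $W_M$ is nondegenerate, each maximal order arc $\mathcal{C}$ is a nondegenerate arc, hence homeomorphic to $[0,1]$, and in particular a nonempty Baire space; therefore every comeager subset of $\mathcal{C}$ is dense in $\mathcal{C}$. Thus for each $\mathcal{C} \in \mathcal{Y}$ the collection of full copies of $W_M$ lying in $\mathcal{C}$ is dense in $\mathcal{C}$.

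Next, Theorem \ref{generalizeddendrite} gives the inclusion $\full(W_M) \subseteq \{K \in \cont(W_M) \mid K \simeq W_M\}$, so for each $\mathcal{C} \in \mathcal{Y}$ the collection $\{K \in \mathcal{C} \mid K \simeq W_M\}$ is dense in $\mathcal{C}$. Applying Lemma \ref{limitofWMisWM} to each such $\mathcal{C}$, I conclude that every nondegenerate element of $\mathcal{C}$ is homeomorphic to $W_M$. Hence every $\mathcal{C} \in \mathcal{Y}$ belongs to
$$\{\mathcal{C} \in \moa(W_M) \mid \forall C \in \mathcal{C},\ C \text{ nondegenerate implies } C \simeq W_M\},$$
and since $\mathcal{Y}$ is comeager this set is comeager in $\moa(W_M)$, i.e. GCHEC holds for $W_M$.

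I do not expect a genuine obstacle here: all the substantive content lives in Proposition \ref{WMGCGHEC} (the Disintegration Theorem argument) and in Lemma \ref{limitofWMisWM} (the approximation of arcs furnished by Lemma \ref{arcsinclosedendrites}). The only point requiring care is the elementary but essential remark that a comeager subset of the Baire space $\mathcal{C}$ is dense, which is precisely what converts the \emph{comeager many elements} conclusion of GCGHEC into the \emph{dense many elements} hypothesis required by Lemma \ref{limitofWMisWM}.
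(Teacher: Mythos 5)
Your proposal is correct and follows essentially the same route as the paper: invoke Proposition \ref{WMGCGHEC} to get comeager (hence dense) many full copies of $W_M$ in comeager many chains, and then apply Lemma \ref{limitofWMisWM} to upgrade density to the statement that every nondegenerate element is a copy of $W_M$. Your explicit remark that a comeager subset of the arc $\mathcal{C}$ is dense is the only step the paper leaves implicit, and it is handled correctly.
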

\begin{proof}
From Proposition \ref{WMGCGHEC} we know that GCGHEC holds for $W_M$ and from previous lemma if the set of homeomorphic copies of $W_M$ is dense in a chain in $\cont(W_M)$, then all of its nondegenerate elements are homeomorphic to $W_M$. Therefore, GCHEC holds for $W_M$.
\end{proof}

From now on we will introduce concepts and properties that will not only give a comeager collection of chains that grow in a more strict way, but also that will be essential to show that the chains ambiently homeomorphic.

\begin{defi}
Given $\mathcal{C} \in \moa(W_M)$ and $x \in W_M$, the hitting time of $x$ in $\mathcal{C}$ is $K \in \mathcal{C}$ such that $x \in K$ and $x \not\in K'$ for all $K' \in \mathcal{C}$ with $K' \subsetneq K$.
\end{defi}

\begin{lem}
Given $\mathcal{C} \in \moa(W_M)$ and $x \in W_M$, then $x$ is an endpoint of its hitting time in $\mathcal{C}$.
\end{lem}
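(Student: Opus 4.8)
The plan is to work with a parametrization of the chain and to exploit the continuity of the order arc together with the tree structure of $W_M$. Fix a parametrization $\gamma\colon[0,1]\to\cont(W_M)$ of $\mathcal{C}$ with $\gamma(0)=\{\Root(\mathcal{C})\}$, $\gamma(1)=W_M$, and $\gamma(s)\subseteq\gamma(t)$ whenever $s\le t$, and write $r=\Root(\mathcal{C})$. Since $\{L\in\cont(W_M)\mid x\in L\}$ is closed (its complement is the open set $\lr{W_M\setminus\{x\}}$) and upward closed along $\gamma$, the set $\{t\in[0,1]\mid x\in\gamma(t)\}$ is a closed interval $[t_x,1]$, and the hitting time of $x$ is exactly $K=\gamma(t_x)$. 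If $t_x=0$ then $x=r$ and $K=\{x\}$, so $x$ is trivially an endpoint; hence I assume $t_x>0$, so that $x\ne r$ and $r\in K\setminus\{x\}$.

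First I would show that the part of the chain strictly below $K$ fills up $K$ in closure. Put $L^{*}=\bigcup_{s<t_x}\gamma(s)$. Each $\gamma(s)\subseteq\gamma(t_x)=K$ gives $L^{*}\subseteq K$, hence $\ol{L^{*}}\subseteq K$. Conversely, continuity of $\gamma$ in the Hausdorff metric yields $h(\gamma(s),K)\to0$ as $s\to t_x^{-}$, so every point of $K$ is a limit of points drawn from the sets $\gamma(s)$; therefore $\ol{L^{*}}=K$. The only fact used here is that an order arc has no jumps, i.e. that $K$ is the Hausdorff limit of its predecessors.

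Next I would invoke that $W_M$, and hence $K$, is a dendrite. For each $s<t_x$ the set $\gamma(s)$ is a subcontinuum of $K$ that avoids $x$ but contains $r$. A connected subset of $K$ missing $x$ lies in a single component of $K\setminus\{x\}$, and since all these sets contain $r$ they all lie in the one component $C_0$ of $K\setminus\{x\}$ that contains $r$; thus $L^{*}\subseteq C_0$. Because $K$ is a dendrite, hence locally connected, $C_0$ is open in $K$, its boundary in $K$ is contained in $\{x\}$, and (as $K$ is connected and $C_0$ is proper and nonempty) $\ol{C_0}=C_0\cup\{x\}$. Combining this with the previous step,
\[
K=\ol{L^{*}}\subseteq\ol{C_0}=C_0\cup\{x\},
\]
so $K\setminus\{x\}=C_0$ is connected. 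Thus $x$ does not separate the nondegenerate dendrite $K$, which forces $\ord(x,K)=1$, i.e. $x\in\ep(K)$.

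I expect the main obstacle to be the density step $\ol{L^{*}}=K$, since this is where the hyperspace structure really enters and where one must be sure that the hitting time is approached continuously from below rather than reached by a jump in the chain. The remaining dendrite bookkeeping — that all predecessors sit inside one component of $K\setminus\{x\}$ and that $\ol{C_0}=C_0\cup\{x\}$ — is routine once local connectedness and unique arcwise connectedness of $K$ are used.
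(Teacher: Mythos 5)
Your proof is correct and rests on the same two ingredients as the paper's: the strict predecessors of the hitting time all avoid $x$, contain the root, and hence lie in a single component of the complement of $x$, while continuity of the order arc forces them to approximate $K$ in the Hausdorff metric. The paper runs this as a contradiction (a point of $K$ in another component of $W_M\setminus\{x\}$ would stay a fixed distance from all predecessors), whereas you argue directly that $K\setminus\{x\}$ is connected; this is only a difference of presentation.
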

\begin{proof}
We will prove by contradiction. If $x$ is not an endpoint of its hitting time $K$ in $\mathcal{C}$, then it is a cut point, and also a cut point of $W_M$. This means that $W_M \setminus \{x\}$ has at least two components and, for every $K' \in \mathcal{C}$ with $K' \subsetneq K$, $K'$ is cointained in the same component $C$ of $W_M \setminus \{x\}$ as the starting point of the chain. However, there is a point $y$ of $K$ in another component $C'$ of $W_M$, thus we can find $\varepsilon>0$ for which $B(y,\varepsilon) \subseteq C'$. Therefore,
$$B(y,\varepsilon) \cap K' = \emptyset$$
but this means $h(K,K')\geq \varepsilon$ for all $K' \subsetneq K$, which is a contradiction since $\mathcal{C}$ is a chain.
\end{proof}

\begin{defi}
Given $\mathcal{C} \in \moa(W_M)$ and $x \in X$, let $K$ denote the hitting time of $x$ in $\mathcal{C}$. The hitting level of $x$ in $\mathcal{C}$ is
$$T(x,\mathcal{C})=\{y \in W_M \ | \ K \text{ is the hitting time of } y\}.$$
\end{defi}

\begin{defi}
Let $\mathcal{C}_1,\mathcal{C}_2 \in \moa(W_M)$. Given $A \subseteq W_M$ a function $f:A \to A$ preserves hitting levels if
$$f(T(x,\mathcal{C}_1) \cap A) \subseteq T(f(x),\mathcal{C}_2) \quad \text{ and } \quad f^{-1}(T(f(x)),\mathcal{C}_2) \subseteq T(x,\mathcal{C}_1)$$
for every $x \in X$.
\end{defi}

\begin{defi}
A chain $\mathcal{C} \in \moa(W_M)$ is willful if for every arc $A \subseteq W_M$ and $K_1,K_2 \in \mathcal{C}$ with
\begin{enumerate}
    \item $K_1 \subsetneq K_2$,
    \item $\emptyset \ne K_1 \cap A \subsetneq A$,
\end{enumerate}
then $K_1 \cap A \subsetneq K_2 \cap A$.
\end{defi}

\begin{lem}
The following is equivalent for any given $\mathcal{C} \in \moa(W_M)$:
\begin{enumerate}[label={\roman*)}]
\item $\mathcal{C}$ is willful.
\item Let $\{x\}$ be the starting point of $\mathcal{C}$. For every arc $[x,b] \subseteq W_M$, where $b \in \mathcal{B}(W_M)$, and every $K_1,K_2 \in \mathcal{C}$ with
\begin{enumerate}
    \item $K_1 \subsetneq K_2$,
    \item $\emptyset \ne K_1 \cap A \subsetneq A$,
\end{enumerate}
it holds that $K_1 \cap A \subsetneq K_2 \cap A$.
\end{enumerate}
\end{lem}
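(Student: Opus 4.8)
The implication (i)$\Rightarrow$(ii) is immediate, as (ii) is nothing but the willfulness condition (i) restricted to the special arcs $A = [x,b]$ with $b \in \mathcal{B}(W_M)$; so the content lies entirely in the converse, which I would establish by contradiction. The plan is to fix an arbitrary arc $A \subseteq W_M$ together with $K_1 \subsetneq K_2$ in $\mathcal{C}$ satisfying $\emptyset \ne K_1 \cap A \subsetneq A$, assume toward a contradiction that $K_1 \cap A = K_2 \cap A$, and reduce the situation to a single arc running from the root of $\mathcal{C}$ to a suitable branching point, where hypothesis (ii) can be invoked to produce the contradiction.

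First I would record the structural facts. Writing $\{x\} = \Root(\mathcal{C})$, every element of $\mathcal{C}$ contains $x$, so $x \in K_1 \subseteq K_2$. Let $r = r_{W_M,A}$ be the first point map onto $A$. Since $W_M$ is a dendrite, hence hereditarily unicoherent, each $K_i \cap A$ is a subcontinuum of the arc $A$, i.e. a point or a subarc; and because the arc $[x,a] \subseteq K_i$ meets $A$ first at $r(x)$ for any $a \in K_i \cap A$, the point $r(x)$ lies in $K_i \cap A$. Thus $S := K_1 \cap A = K_2 \cap A$ is a subarc $[c,d]$ of $A = [p,q]$ with $c$ on the $p$-side and $d$ on the $q$-side of $r(x)$ (possibly $c = d = r(x)$). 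As $S \subsetneq A$, at least one endpoint of $A$ is missed; say $d \ne q$, the $p$-side case being symmetric. Here I would use that branching points are arcwise dense to pick $b \in \mathcal{B}(W_M)$ in the nonempty open subarc $(d,q)$.

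The final step is the computation on $A' = [x,b]$, an arc from the root to a branching point, to which (ii) applies. Since $r(x), b \in A$, one has $A' \cap A = [r(x),b]$ and $[x,r(x)] \cap A = \{r(x)\}$, so using $[x,r(x)] \subseteq K_i$ I would check that $K_i \cap A' = [x,r(x)] \cup (S \cap [r(x),b]) = [x,d]$ for both $i$. Hence $K_1 \cap A' = K_2 \cap A' = [x,d]$, a nonempty proper subarc of $A'$ (as $d \ne b$), and (ii) then forces $K_1 \cap A' \subsetneq K_2 \cap A'$ --- contradiction. I expect the only real friction to be the bookkeeping: confirming $r(x) \in K_i \cap A$ and the subarc structure from the dendrite axioms, and dispatching the symmetric $p$-side and the degenerate $S = \{r(x)\}$ cases, all of which the displayed intersection formula absorbs. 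The genuine idea, and the precise reason arcwise density of branching points is what is needed, is this reduction: slide the missing end of $A$ out to a nearby branching point and thereby convert an arbitrary arc into one of the root-to-branching-point arcs controlled by (ii).
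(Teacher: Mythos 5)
Your proof is correct and follows essentially the same route as the paper's: both reduce a general arc $A$ to a root-to-branching-point arc $[x,b]$ by choosing $b \in \mathcal{B}(W_M)$ in the portion of $A$ missed by $K_1$ (using arcwise density of branching points), and both rely on $[x,r_{W_M,A}(x)] \subseteq K_1$ to transfer the conclusion of (ii) back to $A$. The only cosmetic difference is that you argue by contradiction, computing $K_1\cap[x,b]=K_2\cap[x,b]$ explicitly, whereas the paper argues directly that the strict inclusion on $[x,b]$ must occur inside $[r_{W_M,A}(x),b]\subseteq A$.
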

\begin{proof}
We have to prove only that ii) implies i). Given any arc $A = [z_1,z_2] \subseteq W_M$ and $K_1,K_2 \in \mathcal{C}$ with
\begin{enumerate}[label={(\alph*)}]
    \item $K_1 \subsetneq K_2$,
    \item $\emptyset \ne K_1 \cap A \subsetneq A$,
\end{enumerate}
we have that either $z_1$ or $z_2$ are not in $K_1$, otherwise $K_1$ would contain $A$. Suppose without loss of generality that $z_2 \not\in K_1$, so that
$$Y=[x,z_2] \setminus K_1$$
is nondegenerate, hence contains densely many branching points. Let $b \in Y \cap \mathcal{B}(W_M)$. The arc $[x,b]$ satisfies the condition of item ii), hence $K_1 \cap [x,b] \subsetneq K_2 \cap [x,b]$. Let
$$y=r_{W_M,[z_1,z_2]}(x)$$
so that $[x,y] \subseteq K_1$ since $x \in K_1$ and $K_1 \cap [z_1,z_2] \ne \emptyset$. This means that $K_1 \cap [y,z_2] \subsetneq K_2 \cap [y,z_2]$, and thus
$$K_1 \cap A \subsetneq K_2 \cap A.$$
\end{proof}

\begin{prop}
Let $\mathcal{C}$ be a willful chain in $\moa(W_M)$. Then
\begin{enumerate}[label={(\roman*)}]
\item If $K_1,K_2 \in \mathcal{C}$ with $K_1 \subsetneq K_2$, then every point
$$b \in (K_1 \setminus \ep(K_1)) \cap \mathcal{B}(W_M)$$
is maximal in $K_2$.
\item If $K \in \mathcal{C}$ is nondegenerate, then $K \simeq W_M$.
\end{enumerate}
\end{prop}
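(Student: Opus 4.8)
The plan is to prove (i) by contradiction, using the hitting time of $b$ to locate a place where the chain would be forced to stall along an arc, contradicting willfulness; then to deduce (ii) from (i) by taking $K_2=W_M$ and invoking the characterization of $W_M$ used in Theorem \ref{generalizeddendrite}.

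For (i), I would suppose $b$ is not maximal in $K_2$. By Lemma \ref{maximalbranching} this is the failure of condition (ii), so there is an arc $A=[b,y]\subseteq K_2$ with $y\in K_2\setminus K_1$ and $A\cap K_1=\{b\}$; note $A$ is nondegenerate since $y\neq b$. Let $K_b\in\mathcal{C}$ be the hitting time of $b$. The crucial point is that, because $b\in K_1\setminus\ep(K_1)$ is not an endpoint of $K_1$, the lemma asserting that every point is an endpoint of its hitting time shows that $K_1$ cannot be the hitting time of $b$; since $b\in K_1$ and $\mathcal{C}$ is a chain, the minimality of $K_b$ forces $K_b\subsetneq K_1$. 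Now $K_b\subseteq K_1$ gives $K_b\cap A\subseteq K_1\cap A=\{b\}$, and $b\in K_b$, so $K_b\cap A=\{b\}=K_1\cap A$. Applying willfulness to the arc $A$ and the pair $K_b\subsetneq K_1$ (the hypotheses hold, as $\emptyset\neq K_b\cap A=\{b\}\subsetneq A$) would force $K_b\cap A\subsetneq K_1\cap A$, i.e. $\{b\}\subsetneq\{b\}$, a contradiction. Hence $b$ is maximal in $K_2$.

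For (ii), let $K\in\mathcal{C}$ be nondegenerate; I may assume $K\subsetneq W_M$, otherwise the statement is trivial. Since $\mathcal{C}$ is a maximal order arc, $W_M\in\mathcal{C}$, so I apply (i) with $K_1=K$ and $K_2=W_M$: every $b\in(K\setminus\ep(K))\cap\mathcal{B}(W_M)$ is maximal in $W_M$, that is $r_{W_M,K}^{-1}(b)=\{b\}$. By Lemma \ref{maximalbranching} (condition (iii)) this means every neighborhood of $b$ in $K$ meets each of the $\ord(b,W_M)$ components of $W_M\setminus\{b\}$, so $\ord(b,K)\geq\ord(b,W_M)$; as $K\subseteq W_M$ also gives $\ord(b,K)\leq\ord(b,W_M)$, I conclude $\ord(b,K)=\ord(b,W_M)$, and in particular $b\in\mathcal{B}(K)$ with its order maximal. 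Passing to $K\simeq W_M$ is then exactly the argument of Theorem \ref{generalizeddendrite}: for fixed $m\in M$ and any arc $[u,v]\subseteq K$, the interior $(u,v)$ is an open subarc of $W_M$ and hence contains densely many branching points of $W_M$ of order $m$; each such point lies in $K\setminus\ep(K)$ and, by the order equality just proved, is a branching point of $K$ of order $m$. Thus for every $m\in M$ the order-$m$ branching points of $K$ are arcwise dense in $K$ (so $K$ has no free arc and is genuinely non-arc), while every branching point of $K$ has order in $M$; by the topological characterization of generalized Ważewski dendrites this forces $K\simeq W_M$.

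The crux is the argument for (i): recognizing that the hitting time $K_b$ of $b$ lies strictly below $K_1$—which is precisely where the hypothesis $b\notin\ep(K_1)$ enters, via the hitting-time lemma—and that willfulness, applied to the pair $(K_b,K_1)$ along $A$, forbids $A\cap\,\cdot\,$ from remaining equal to the single point $\{b\}$ across the nondegenerate subchain $[K_b,K_1]$. Once (i) is in hand, everything in (ii) is a routine translation through Lemma \ref{maximalbranching}(iii) together with the arcwise density of branching points that characterizes $W_M$.
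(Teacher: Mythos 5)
Your proof is correct and follows essentially the same route as the paper's: both hinge on the observation that $b\notin\ep(K_1)$ forces the hitting time $K_b$ of $b$ to satisfy $K_b\subsetneq K_1$, and then apply willfulness to the pair $(K_b,K_1)$ --- the paper does this directly along arcs from the root into each component of $W_M\setminus\{b\}$, while you do it by contradiction along the arc $[b,y]$ witnessing non-maximality, which is only a cosmetic difference. Part (ii) is the paper's argument (apply (i) with $K_2=W_M$ and invoke the characterization of $W_M$) with the order computation spelled out in slightly more detail.
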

\begin{proof}
\begin{enumerate}[label={\roman*)}]
    \item If $b \in Y$, then the hitting time $K_b$ of $b$ in $\mathcal{C}$ is such that $K_b \subsetneq K_1$. Thus, for every component of $W_M \setminus \{b\}$ that does not contain $K_b$ we can take a point $z$ and if $x=\Root(\mathcal{C})$, we have by definition of willfulness that $[x,z] \cap K_b \subsetneq [x,z] \cap K_1$. Therefore, $b \in \mathcal{B}(K_1)$ and it is maximal in $W_M$, hence in $K_2$.
    \item We know from (i) that every branching point of $\mathcal{B}(W_M)$ in $K \setminus \ep(K)$ is in $\mathcal{B}(K_1)$ and it is maximal in $W_M$, thus it has the same order in $K$ as in $W_M$. Therefore, given an arc $A \subseteq K$, $A \cap \mathcal{B}(K) = A \cap \mathcal{B}(W_M)$, so $K$ has arcwise densely many branching points of every order in $M$ and it is homeomorphic to $W_M$.
\end{enumerate}
\end{proof}

\begin{prop}
    Let $\mathcal{C} \in \moa(W_M)$ be a chain where for every $K_1 \ne K_2 \in \mathcal{C}$ with $K_1 \subsetneq K_2$, $K_1$ is nowhere dense in $K_2$. Then for every $K \in \mathcal{C} \setminus \Root(\mathcal{C})$, 
    $$\operatorname{End}(K) \setminus (\cup \{K' \in \mathcal{C} \ | \ K' \subsetneq K\})$$
    is dense in $K$.
\end{prop}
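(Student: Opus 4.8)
The plan is to reduce the statement to a Baire-category argument inside $K$, using the hitting-time lemma to convert the set of ``late'' points into endpoints. Write $U_K=\bigcup\{K'\in\mathcal{C}\mid K'\subsetneq K\}$, so that the set under consideration is $\ep(K)\setminus U_K$. The first step I would take is to observe that $\ep(K)\setminus U_K=K\setminus U_K$. Indeed, if $y\in K\setminus U_K$, then $y$ lies in no proper predecessor of $K$ in the chain. Since the members of $\mathcal{C}$ are linearly ordered by inclusion, the set of members containing $y$ is an up-set whose minimum is precisely $K$ (every member strictly below $K$ misses $y$, while $K$ and all later members contain it), so the hitting time of $y$ in $\mathcal{C}$ equals $K$. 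By the lemma stating that every point is an endpoint of its hitting time, $y\in\ep(K)$. Hence $K\setminus U_K\subseteq\ep(K)$, and the two displayed sets coincide. It therefore suffices to prove that $K\setminus U_K$ is dense in $K$.

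The core of the argument, and the only place where the hypothesis is really used, is to show that $U_K$ is \emph{meager} in $K$. Fix an order-preserving parametrization $\Gamma$ of $\mathcal{C}$ with $\Gamma(0)=\Root(\mathcal{C})$ and $\Gamma(t_0)=K$, and choose an increasing sequence $s_n\uparrow t_0$, putting $K_n=\Gamma(s_n)$. Every member of $\mathcal{C}$ strictly below $K$ is $\Gamma(s)$ for some $s<t_0$ and so is contained in some $K_n$; therefore $U_K=\bigcup_n K_n$ is a countable union. Since $s_n<t_0$ and $\Gamma$ is injective, each $K_n$ is a proper predecessor $K_n\subsetneq K$, so by the standing hypothesis $K_n$ is nowhere dense in $K$. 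Thus $U_K$ is a countable union of nowhere dense subsets of $K$, i.e. meager in $K$.

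Finally, $K$ is a subcontinuum of $W_M$, hence a compact (equivalently completely) metrizable space and therefore a Baire space. The complement of the meager set $U_K$ is then comeager, and in particular dense, in $K$. Combining this with the first step gives that $\ep(K)\setminus U_K=K\setminus U_K$ is dense in $K$, which is the claim. (Density is meaningful because $K$ is nondegenerate: as $K\neq\Root(\mathcal{C})$ and $\Root(\mathcal{C})$ is the minimal, singleton member of $\mathcal{C}$, $K$ strictly contains it.)

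The single delicate point I expect is the passage to the countable subfamily $\{K_n\}$: one must invoke that the predecessors of $K$ form the monotone image of $[0,t_0)$ under $\Gamma$, so that a cofinal sequence exhausts $U_K$, whereupon the nowhere-density hypothesis is exactly what upgrades this countable union to a meager set. Everything else is a direct application of the hitting-time lemma and of the Baire category theorem recalled in the preliminaries.
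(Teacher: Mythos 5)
Your proof is correct, and on the key point it takes a cleaner route than the paper. Both arguments share the same Baire-category core: pick a cofinal sequence $K_n\uparrow K$ in the chain, note each $K_n$ is nowhere dense in $K$ by hypothesis, conclude that $\bigcup\{K'\in\mathcal C\mid K'\subsetneq K\}=\bigcup_n K_n$ is meager in the compact (hence Baire) space $K$. Where you diverge is in handling the endpoint condition. The paper treats $\ep(K)$ and $K\setminus U_K$ as two comeager sets and intersects them, which requires the additional claim that $\ep(K)$ is comeager in $K$ --- a fact the paper asserts without proof and which, for an arbitrary chain satisfying only the nowhere-dense hypothesis, needs a separate argument (one must rule out free arcs in $K$, e.g.\ by observing that a predecessor $K_n$ close to $K$ would meet a free arc in a nondegenerate subarc and hence fail to be nowhere dense). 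You instead prove the stronger identity $K\setminus U_K=\ep(K)\setminus U_K$ directly: a point of $K$ lying in no proper predecessor has hitting time exactly $K$, so the hitting-time lemma makes it an endpoint of $K$. This bypasses the unproved comeagerness of $\ep(K)$ entirely and uses only the hypotheses actually stated in the proposition, so your version is, if anything, the more self-contained of the two.
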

\begin{proof}
Given that $\mathcal{C}$ is a chain and $K \ne \Root(\mathcal{C})$, there exists a sequence $(K_n) \subseteq \mathcal{C}$ for which $K_n \subseteq K_{n+1} \subseteq K$ and $\lim K_n = K$. Given that $K_n$ is nowhere dense in $K$, that $K$ is a continuum, and that $\ep(K)$ is a comeager subset of $K$, we have
$$\operatorname{End}(K) \setminus (\cup \{K' \in \mathcal{C} \ | \ K' \subsetneq K\})$$
dense in $K$. 
\end{proof}

\subsection{Generic chains are homeomorphically equivalent}

\begin{lem}\label{rootiscomeagermap}
Let $X$ be a Peano continuum and $D\subseteq X$.
Then
\begin{enumerate}[label={(\roman*)}]
    \item If $D$ is dense, then $\{\mathcal C: \Root(\mathcal C)\in D\}$ is dense.
    \item If $D$ is open, then $\{\mathcal C: \Root(\mathcal C)\in D\}$ is open.
\end{enumerate}
\end{lem}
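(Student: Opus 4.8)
The plan is to handle the two parts by quite different methods: part (ii) is a soft continuity statement, while part (i) requires an explicit surgery on maximal order arcs, exploiting that $X$ is Peano.

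For (ii) I would avoid sequences and argue directly that $\Root$ is (1-Lipschitz, hence) continuous. Fix $\mathcal{C}_0$ with $x_0=\Root(\mathcal{C}_0)\in D$ and choose $\varepsilon>0$ with $B(x_0,\varepsilon)\subseteq D$. Take any $\mathcal{C}$ with $h^2(\mathcal{C},\mathcal{C}_0)<\varepsilon$ and set $x=\Root(\mathcal{C})$. Since $\{x\}\in\mathcal{C}$, the defining inequality for $h^2$ produces some $K_0\in\mathcal{C}_0$ with $h(\{x\},K_0)<\varepsilon$. Because $\{x_0\}$ is the smallest element of $\mathcal{C}_0$ we have $x_0\in K_0$, and $h(\{x\},K_0)=\max_{k\in K_0}d(x,k)<\varepsilon$ forces $K_0\subseteq B(x,\varepsilon)$; hence $d(x,x_0)<\varepsilon$, i.e. $x\in B(x_0,\varepsilon)\subseteq D$. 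Thus the whole $\varepsilon$-ball around $\mathcal{C}_0$ lies in $\{\mathcal{C}:\Root(\mathcal{C})\in D\}$, proving openness.

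For (i), given $\mathcal{C}\in\moa(X)$ with root $x$ and $\varepsilon>0$, I would use that the Peano continuum $X$ is locally (arcwise) connected to pick a connected open neighborhood $U$ of $x$ with $\operatorname{diam}(U)<\varepsilon$, then use density of $D$ to choose $x'\in D\cap U$, and take an arc $A$ from $x'$ to $x$ with $A\subseteq U$. The competitor chain $\mathcal{C}'$ is built by concatenating two order arcs: first, parametrizing $A$ by $\alpha\colon[0,1]\to A$ with $\alpha(0)=x'$, $\alpha(1)=x$, the family $t\mapsto\alpha([0,t])$ running from $\{x'\}$ up to $A$; second, the \emph{thickened tail} $\{A\cup K : K\in\mathcal{C}\}$, which is the image of the continuum $\mathcal{C}$ under the continuous map $K\mapsto A\cup K$ and runs from $A$ (when $K=\{x\}$) up to $X$. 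The first family consists of subsets of $A$ and the second of supersets of $A$, so $\mathcal{C}'$ is linearly ordered; it is a continuum (union of two continua sharing the element $A$) containing $\{x'\}$ and $X$, hence a maximal order arc with $\Root(\mathcal{C}')=x'\in D$.

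The one step needing care, and where I expect the main (purely computational) obstacle, is the estimate $h^2(\mathcal{C},\mathcal{C}')<\varepsilon$. Each element $S$ of the first family satisfies $S\subseteq A\subseteq U$, so together with $x\in U$ it is $h$-close to $\{x\}\in\mathcal{C}$: $h(S,\{x\})\le\operatorname{diam}(U)<\varepsilon$. For the tail, every $K\in\mathcal{C}$ has $A\cup K\in\mathcal{C}'$ with $K\subseteq A\cup K$ and, since $x\in A\cap K$, every point of $A$ lies within $\operatorname{diam}(A)\le\operatorname{diam}(U)$ of $K$; hence $h(A\cup K,K)<\varepsilon$. These two observations bound both one-sided Hausdorff distances between $\mathcal{C}$ and $\mathcal{C}'$ (the tail also supplies, for each $K\in\mathcal{C}$, a nearby element of $\mathcal{C}'$), giving $h^2(\mathcal{C},\mathcal{C}')<\varepsilon$. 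I expect no conceptual difficulty beyond this bookkeeping: local connectedness of $X$ is precisely what lets $U$ and the arc $A$ be simultaneously small, and density of $D$ is used only to place the new root $x'$ inside $U$.
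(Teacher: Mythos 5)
Your proof is correct and follows essentially the same route as the paper: for (i) you perform the identical surgery (a small order arc growing along an arc from a new root $x'\in D\cap U$ to $x$, followed by the tail $\{A\cup K : K\in\mathcal{C}\}$), and for (ii) you establish continuity of $\Root$, just phrased via an explicit $\varepsilon$-ball estimate rather than the paper's sequential argument. The Hausdorff-distance bookkeeping you flag as the delicate step is exactly right and complete as stated.
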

\begin{proof}
\begin{enumerate}[label={(\roman*)}]
\item If $D$ is dense, then given $\mathcal{C} \in \moa(X)$ and $\varepsilon>0$ there exists a connected open neighborhood $U$ of $x=\Root(\mathcal{C})$ having diameter smaller than $\varepsilon$. Thus, we can find $y \in U \cap D$ and an arc $\gamma:[0,1] \to U$ from $y$ to $x$. Then, 
$$\mathcal{C}'=\{\gamma([0,t]) \ | \ 0 \leq t \leq 1\} \cup \{K \cup \gamma([0,1]) \ | \ K \in \mathcal{C}\}$$
is a maximal order arc with $\Root(\mathcal{C}')=y$ and $h^2(\mathcal{C},\mathcal{C}')<\varepsilon$. Therefore $\{\mathcal C: \Root(\mathcal C)\in D\}$ is dense.
\item It is enough to show that the map $\Root: \moa(X)\to X$ is continuous. If $(\mathcal{C}_n)$ is a sequence of chains converging to $\mathcal{C}$, then for every $\varepsilon>0$ there exists $N \in \bb{N}$ for which we can find $K_n \in \mathcal{C}_n$ satisfying $h(\Root(\mathcal{C}),K_n)<\varepsilon$ whenever $n \geq N$. In particular, $d(\Root(\mathcal{C}),\Root(\mathcal{C}_n))<\varepsilon$, so $(\Root(\mathcal{C}_n))$ converges to $\Root(\mathcal{C})$ and the map is continuous.
\end{enumerate}
\end{proof}

\begin{theo}\label{ambientlyeqWM}
The set of chains $\mathcal{C}\in \moa(W_M)$ satisfying
\begin{enumerate}[label={(\roman*)}]
\item The root of $\mathcal C$ is an endpoint of $W_M$.
\item If $K,L \in \mathcal{C}$ with $K \subsetneq L$, then $K$ is nowhere dense in $L$.
\item If $K\in\mathcal C$, then $|\ep(K)\cap\mathcal{B}(W_M)|\leq 1$.
\item $\mathcal{C}$ is willful.
\end{enumerate}
form a comeager set. Moreover, any two chains 
satisfying these conditions are ambiently equivalent.
\end{theo}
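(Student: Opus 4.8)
The plan is to prove the two assertions separately: first that each of conditions (i)--(iv) determines a comeager subset of $\moa(W_M)$, so that the four together cut out a comeager set; and second that the ambient homeomorphism type of a chain satisfying all four is rigidly determined, by extracting from the chain a countable ordered structure and matching two such structures by a back-and-forth argument that feeds into Proposition \ref{duchesnehomeo}.

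For the comeager assertion, condition (i) follows from Lemma \ref{rootiscomeagermap}: writing the dense $G_\delta$ set $\ep(W_M)$ as $\bigcap_n U_n$ with each $U_n$ open and dense, the set $\{\mathcal C:\Root(\mathcal C)\in U_n\}$ is open by part (ii) and dense by part (i), so $\{\mathcal C:\Root(\mathcal C)\in\ep(W_M)\}$ is a dense $G_\delta$. Each of (ii), (iii), (iv) is generic by a Baire-category argument in which the complement is displayed as a countable union of closed sets with empty interior, indexed by branching points $b\in\mathcal B(W_M)$ and by a lower bound $1/m$ on the Hausdorff gap $h(K_1,K_2)$; the ``empty interior'' clauses are verified by locally enlarging a witnessing chain inside a small connected neighbourhood of a suitable endpoint or branching point, in the same spirit as the density computation in the proof of Proposition \ref{WMGCGHEC}. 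Being a finite (hence countable) intersection of comeager sets, the required family is comeager.

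For the ambient equivalence, I would first encode a chain $\mathcal C$ with root $x\in\ep(W_M)$ by an order on its branching points. For $b\in\mathcal B(W_M)$ let $K_b$ be its hitting time and set $b<_{\mathcal C}b'$ iff $K_b\subsetneq K_{b'}$. Since each point is an endpoint of its hitting time, condition (iii) forces distinct branching points to have distinct hitting times, so (the hitting times being nested) $<_{\mathcal C}$ is a strict linear order. The proposition on willful chains shows every nondegenerate $K\in\mathcal C$ is homeomorphic to $W_M$, hence $K=\overline{K\cap\mathcal B(W_M)}$, and $K\cap\mathcal B(W_M)=\{b:K_b\subseteq K\}$ is an initial segment of $<_{\mathcal C}$; conversely each level is the closure of such an initial segment, so $\mathcal C$ is determined by the pair $(\mathcal B(W_M),<_{\mathcal C})$ together with the betweenness relation. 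Using condition (ii) one checks that $<_{\mathcal C}$ is a countable dense order with no least and no greatest element (type $\eta$), and that $b\in[x,b']$ implies $K_b\subseteq K_{b'}$, so $<_{\mathcal C}$ is a linear extension of the tree order rooted at $x$. Given two chains $\mathcal C_1,\mathcal C_2$ with orders $<_1,<_2$, I would construct a bijection $\varphi:\mathcal B(W_M)\to\mathcal B(W_M)$ by a back-and-forth enumeration that simultaneously preserves betweenness, preserves the order $\ord(\cdot,W_M)$ of each branching point, and is an order isomorphism $(\mathcal B(W_M),<_1)\to(\mathcal B(W_M),<_2)$; Proposition \ref{duchesnehomeo} then extends $\varphi$ to a homeomorphism $\psi:W_M\to W_M$. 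Finally, since $\psi$ is the continuous extension of a $<$-isomorphism, it sends $<_1$-initial segments to $<_2$-initial segments, hence carries each $\overline{K\cap\mathcal B(W_M)}$, $K\in\mathcal C_1$, to the closure of a $<_2$-initial segment, which is a level of $\mathcal C_2$; thus the induced map $\Psi$ satisfies $\Psi(\mathcal C_1)\subseteq\mathcal C_2$, and as both are maximal order arcs this forces $\Psi(\mathcal C_1)=\mathcal C_2$.

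The main obstacle is the coordinated extension step of the back-and-forth, where the spatial (betweenness/tree) matching and the temporal ($<_{\mathcal C}$) matching must be realized at once: given a new source point $b$, one must find a target branching point $b^*$ of order $\ord(b,W_M)$ lying in the prescribed component or edge of the current matched subtree \emph{and} occupying the prescribed slot of the $<_2$-order. I expect to isolate this as a lemma asserting the needed homogeneity of the four conditions, namely that willfulness (iv) together with nowhere-density (ii) guarantee that $\mathcal C_2$ sweeps any spatial region it eventually covers gradually, so that a branching point of any given order absorbed within any prescribed sub-interval of $\mathcal C_2$ can be located there; arcwise density of branching points of every order in $M$ supplies the spatial freedom, and condition (iii) keeps the order $<_2$ genuinely linear so that the target slot is unambiguous.
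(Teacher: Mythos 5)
Your proposal follows essentially the same route as the paper: genericity of (i)--(iv) is established condition by condition by writing each complement as a countable union of closed nowhere dense sets, and ambient equivalence comes from a back-and-forth bijection of $\mathcal{B}(W_M)$ preserving betweenness, point orders, and the hitting-time structure (your linear order $<_{\mathcal C}$ is just a repackaging of the paper's ``hitting level'' bookkeeping), extended to an ambient homeomorphism by Proposition \ref{duchesnehomeo} and transferred to the chains by density of the hitting times of branching points. The one step you defer to an ``expected lemma'' --- simultaneously realizing the spatial (component/edge) and temporal ($<_2$-slot) constraints when placing a new target point, using willfulness, nowhere density, and arcwise density of branching points of each order --- is precisely where the paper's proof spends most of its effort, and it goes through as you anticipate.
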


\begin{proof}
First, we will prove that each of the conditions (i)-(iv) yield comeager sets.

\underline{Proof of (i):}

By Lemma \ref{rootiscomeagermap} the collection of chains having root in a comeager set is comeager. Thus, given that $\ep(W_M)$ is comeager, the set of chains having an endpoint as root is comeager.

\underline{Proof of (ii):}

Let $\scr{B}$ be a countable base for the topology made of connected open sets. Define for $B,C,D,E \in \scr{B}$, where $B \cap D=\emptyset$, $\overline{C} \subset B$ and $\overline{E} \subset D$, the set
$$\scr{Z}_{B,C,D,E} = \left\{\mathcal{C} \in \moa(W_M) \ \left| \ \begin{array}{c} \exists K,L \in \mathcal{C} : K \subsetneq L, \emptyset \ne B \cap L \subseteq K,\\ K \cap \overline{C} \ne \emptyset, K \cap \overline{D}=\emptyset, L \cap \overline{E} \ne \emptyset\end{array}\right.\right\}.$$
Note that if $\mathcal{C} \in \scr{Z}_{B,C,D,E}$, then the sets $K,L \in \mathcal{C}$ that witness it are such that $K$ is not nowhere dense in $L$. On the other hand, if $\mathcal{C}$ is a chain such that for some $K,L \in \mathcal{C}$ with $K \subsetneq L$ it holds that $K$ has a non-empty interior in $L$, there exists an open set $B \in \scr{B}$ for which $\emptyset \ne B \cap K = B \cap L$, hence an open set $C \in \scr{B}$ with $\overline{C} \subset B$ and $K \cap \overline{C} \ne \emptyset$. Also, given that $K \subsetneq L$, there exists an open set $D \in \scr{B}$ for which $L \cap D \ne \emptyset$ and $K \cap D = \emptyset$, hence in particular there is $E \in \scr{B}$ with $\overline{E} \subset D$ and $L \cap \overline{E} \ne \emptyset$. Thus $\mathcal{C} \in \scr{Z}_{B,C,D,E}$. Therefore, if
$$\scr{Y} = \{\mathcal{C} \in \moa(W_M) \ | \ \text{ if } A,B \in \mathcal{C} \text{ and } A \subsetneq B, \text{ then } A \text{ is nowhere dense in } B\},$$
we have that
$$\scr{Y} = \moa(W_M) \setminus \bigcup_{B \in \scr{B}} \bigcup_{\substack{C \in \scr{B} \\ \overline{C} \subset B}} \bigcup_{D \in \scr{B}} \bigcup_{\substack{E \in \scr{B} \\ \overline{E} \subset D}} \scr{Z}_{B,C,D,E}.$$
We claim that each $\scr{Z}_{B,C,D,E}$ is closed and has an empty interior.

First, we will prove that $\scr{Z}_{B,C,D,E}$ is closed. Let $(\mathcal{C}_n)$ be a sequence in $\scr{Z}_{B,C,D,E}$ converging to $\mathcal{C}$. For each $\mathcal{C}_n$ there exists a pair of witnesses $K_n,L_n \in \mathcal{C}_n$ satisfying the conditions of $\scr{Z}_{B,C,D,E}$, and we can assume that the sequences $(K_n)$ and $(L_n)$ converge respectively to sets $K,L \in \mathcal{C}$. By definition, the sets $K$ and $L$ satisfy $K \subseteq L$, $K \cap D = \emptyset$ and $L \cap \overline{E} \ne \emptyset$, thus $K \subsetneq L$. Also, $K \cap \overline{C} \ne \emptyset$, so $L \cap B \ne \emptyset$. Hence, it remains to show $L \cap B \subseteq K$. Let $x \in L \cap B$ and let $(x_n)$ be a sequence converging to $x$ with $x_n \in L_n$. There exists $N \in \bb{N}$ for which $x_n \in B$ for every $n \geq N$ since $B$ is open, but then $x_n \in K_n$, and thus $x \in K$. Therefore, $L \cap B \subseteq K$ and $\scr{Z}_{B,C,D,E}$ is closed.

To show that $\scr{Z}_{B,C,D,E}$ has an empty interior, let $\mathcal{C} \in \scr{Z}_{B,C,D,E}$ and $\Gamma:[0,1] \to \mathcal{C}$ be a parametrization of $\mathcal{C}$ and $\varepsilon>0$. Let $r=\inf \Gamma^{-1}(\lr{W_M,\ol{C}})$. If $r>0$, let $y \in \Gamma(r) \cap \partial C$ and find an endpoint $z \in C$ such that $h([y,z],y)<\varepsilon$. Define $U \subset C$ with $\ol{U} \subseteq C$ as a connected open neighborhood of $z$ of diameter smaller than $\varepsilon$ such that $|\partial U|=1$. Denote $p=\partial U$. Then, define two auxiliary order arcs $\alpha:[r,1] \to \cont([y,p])$ going from $\{y\}$ to $[y,p]$ and $\beta:[1,2-r] \to \cont(\ol{U})$ going from $\{p\}$ to $\ol{U}$. Then, we define $\Gamma':[0,2-r] \to \cont(W_M)$ by
$$\Gamma'(t)=\begin{cases}
    \Gamma(t), \text{ if } t < r,\\
    \Gamma(r) \cup \alpha(t), \text{ if } r \leq t \leq 1,\\
    (\Gamma(t-1+r) \setminus U) \cup \beta(t), \text{ if } 1 \geq t \geq 2-r.
\end{cases}$$
Then $\Gamma'(t_1) \cap B \ne \Gamma'(t_2)$ whenever $\Gamma'(t_1) \cap \ol{C} \ne \emptyset$ and $t_1 < t_2$. Hence, the chain $\mathcal{C}'$ given by $\Gamma'$ is not in $\scr{Z}_{B,C,D,E}$ and by construction $h(\mathcal{C},\mathcal{C}')<\varepsilon$. If $r=0$, take $y=\Root(\mathcal{C})$ and $U$ with the additional condition that $y \not\in U$ so that the same argument works. Therefore, $\scr{Z}_{B,C,D,E}$  has an empty interior.

\underline{Proof of (iii):}

Let $n \ne m$ and define
 $$\mathscr{A}_{n,m}=\{\mathcal{C} \in \moa(W_M) \ | \ \exists K \in \mathcal{C} : \{b_n,b_m\} \subseteq \ep(K)\}.$$
 The set $\mathscr{A}_{n,m}$ is closed, since if $(\mathcal{C}_n)$ is a sequence in $\mathscr{A}_{n,m}$ converging to $\mathcal{C} \in \moa(W_M)$, we can find $K_i \in \mathcal{C}_i$ such that $\{b_n,b_m\} \in \ep(K_i)$ for every $i \in I$. By compactness there is a convergent subsequence and this subsequence converges to $K \in \mathcal{C}$. Then, we claim $\{b_n,b_m\} \in \ep(K)$. Both points belong to $K$, so suppose that one of them, $b_n$ for example, is not an endpoint of $K$. We know that $b_n$ is a cut point of $W_M$ and $K$ is non-degenerate. Hence, we claim that $K_i \setminus \{b_n\}$ is contained in one connected component of $W_M \setminus \{b_n\}$ for every $i \in I$. Every $K_i$ contains $(b_n,b_m)$, which is contained in one of the components of $W_M \setminus \{b_n\}$. Thus, since $K_i \setminus \{b_n\}$ is also connected, it has to be in the same component as $(b_n,b_m)$. However, since $b_n$ is not an endpoint of $K$, it is a cut point of it, thus there exists a point $x$ of $K$ in a component that does not meet $(b_n,b_m)$, and thus an open neighborhood of $x$ that is contained in this component, therefore not meeting any $K_i$, a contradicition. This means $\mathcal{C} \in \mathscr{A}_{n,m}$ and $\mathscr{A}_{n,m}$ is closed.

 Also, we claim that $\mathscr{A}_{n,m}$ has an empty interior. Given $\varepsilon>0$ and $\mathcal{C} \in \mathscr{A}_{n,m}$, let $x$ be the root of $\mathcal{C}$ and let $y$ be a point in a component of $W_M \setminus \{b_n\}$ that does not contain $x$ such that
 $$h([b_n,y],\{b_n\})< \varepsilon.$$
 Let $K_y$ be the hitting time of $y$ in $\mathcal{C}$ and choose a point $z \in (x,b_n)$ with
 $$h([z,b_n],b_n)<\varepsilon.$$
 Then, let $K_z$ be the hitting time of $z$ in $\mathcal{C}$. If $[z,y](t)$ is the arc from $\{z\}$ to $[z,y]$ growing within $[z,y]$, we can define a new maximal order arc $\mathcal{C}'$ as follows: Let $\Gamma: [0,1] \to \cont(W_M)$ be a parametrization of $\mathcal{C}$ with $\Gamma(t_z)=K_z$ and $\Gamma(t_y)=K_y$, and define $\Gamma':[0,2] \to \cont(W_M)$ as
 $$\Gamma'(t)=\begin{cases}
    \Gamma(t), \text{ if } 0 \leq t \leq t_z,\\
    \Gamma(t_z) \cup [z,y](t-t_z), \text{ if } t_z \leq t \leq 1+t_z,\\
    \Gamma(t-1) \cup [z,y], \text{ if } 1+t_z \leq t \leq 2.
   \end{cases}
   $$
Note that if $t \geq t_z$, then $z \in \Gamma'(t)$ and
$$h(\Gamma(t_z),\Gamma(t_z) \cup [z,y])< 2\varepsilon$$
since
$$h(\{z\},[z,y])\leq h(\{z\},[z,b_n])+h([z,b_n],[z,y]) = h(z,[z,b_n])+h(\{b_n\},[z,y]) < 2\varepsilon,$$
so $\Gamma'([0,2])$ is a maximal order arc $\mathcal{C}'$ for which
$$h^2(\mathcal{C},\mathcal{C}')<2 \varepsilon.$$
Moreover, $\mathcal{C}' \not\in \mathscr{A}_{n,m}$ since $b_m \not\in [x,y]$ and thus $b_m$ is not in $\Gamma'(1+t_z)$ while $b_n$ is. Therefore, $\mathscr{A}_{n,m}$ has an empty interior and
$$\moa(W_M) \setminus  \bigcup_{\substack{n,m \in \mathbb{N} \\ n \ne m}} \mathscr{A}_{n,m} = \{\mathcal{C} \in \moa(X) \ | \ |T(b,\mathcal{C}) \cap \mathcal{B}(W_M)|=1 \text{ for all } b \in \mathcal{B}(W_M)\}$$
is a comeager set.

\underline{Proof of (iv):}

We claim that fixed $n \in \bb{N}$ and $b \in \mathcal{B}(W_M)$,
 $$\scr{R}(n,b)=\left\{\mathcal{C} \in \moa(W_M) \ \left| \ \begin{array}{c}\exists K_1 K_2 \in \mathcal{C}, h(K_1,K_2)\geq 1/n , K_1 \subsetneq K_2, B(b,1/n) \cap K_1= \emptyset\\
 \text{ and } {[\Root(\mathcal{C}),b]} \cap K_1 = [\Root(\mathcal{C}),b] \cap K_2  
 \end{array}
\right.\right\}$$
is closed and has an empty interior. If we prove this, the set
$$\scr{G}=\moa(W_M) \setminus \left(\bigcup_{b \in \mathcal{B}(W_M)} \bigcup_{n \in \bb{N}} \scr{R}(n,b)\right)$$
is comeager and $\mathscr{G}$ is exactly the desired collection. Indeed, note that if $\mathcal{C} \in \mathscr{G}$, given $K_1, K_2 \in \mathcal{C}$ with $K_1 \subseteq K_2$ and any $b \in \mathcal{B}(W_M)$ with $b \not\in K_1$, there is $n \in \bb{N}$ for which
$$h(K_1,K_2) \geq 1/n \quad \text{and} \quad B(b,1/n) \cap K_1 = \emptyset.$$
We know that $\mathcal{C} \not\in \scr{R}(b,n)$, so
$$[\Root(\mathcal{C}),b] \cap K_1 \subsetneq [\Root(\mathcal{C}),b] \cap K_2.$$

\underline{\textit{Part 1:}} $\scr{R}(n,b)$ is closed. Let $(\mathcal{C}_i)$ be a sequence in $\scr{R}(n,b)$ converging to $\mathcal{C} \in \moa(W_M)$. Then, there are two sequences $({K}_i)$ and $({L}_i)$ witnessing the conditions of $\scr{R}(n,b)$ that have convergent subsequences to $K'$ and $L'$ respectively. Without loss of generality we assume that the subsequences are the same sequence as the original. We claim that $K'$ and $L'$ witness the conditions required to have $\mathcal{C} \in \scr{R}(n,b)$. Note that $K'\subsetneq L'$, $h(K',L') \geq 1/n$ and $B(b,1/n) \cap K'=\emptyset$.

First, we claim that $K',L' \in \mathcal{C}$: $\mathcal{C}$ is the limit of $(\mathcal{C}_i)$, so given $\varepsilon>0$, there exists $I \in \mathbb{N}$ such that $$h^2(\mathcal{C}_i,\mathcal{C})< \varepsilon$$
whenever $i \geq I$. However, by definition of $K'$ and $L'$, there exists $I'$ for which
$$\max\{h(K',{K}_i),h(L',{L}_i)\}<\varepsilon$$
whenever $i \geq I'$, hence if $i \geq \max \{I,I'\}$
$$h^2(\mathcal{C}_i,\mathcal{C} \cup \{K', L'\})<\varepsilon.$$
Thus, by uniqueness of the limit, $\mathcal{C}=\mathcal{C} \cup K' \cup L'$.

Let $x = \Root(\mathcal{C})$. It remains to show that $[x,b] \cap K'=[x,b]\cap L'$ so that $\mathcal{C} \in \scr{R}(n,b)$. By definition, $A_1 = [x,b] \cap K' \subseteq [x,b] \cap L' = A_2$. If $z \in A_2 \setminus A_1$, and consequently $z \in L' \setminus K'$, then there exists a sequence $(z_i)$ with $z_i \in {L}_i$ converging to $z$. Let $r=h(z,K')$, $U$ be a connected open neighborhood of $z$ of diameter smaller than $r/2$, and fix $i$ such that $$h({K}_i,K')<r/2 \quad \text{and} \quad z_i \in U.$$
Note that the endpoint $y_i$ other than $x$ in
$${K}_i \cap [x,b] = {L}_i \cap [x,b]$$
is not in $U$, since this would mean $h(z,K')<r$. However, note that
$$[x,z_i] \subseteq [x,z] \cup [z,z_i],$$
and since $[z,z_i] \subseteq U$ while $[x,z_i] \subseteq {L}_i$, there is a point
$$y \in [x,z_i] \cap [x,z] \cap [z,z_i] \subseteq U$$
which is in
$${L}_i \cap [x,z] \subseteq {L}_i \cap [x,b] = {K}_i \cap [x,b]=[x,y_i],$$
which is a contradiction because it implies $h(z,{K}_i)<r/2$ and thus $h(z,K')<r$.

\underline{\textit{Part 2:}} Now we want to prove that $\scr{R}(n,b)$ has an empty interior. Given $\varepsilon>0$ and $\mathcal{C} \in \scr{R}(n,b)$, let $C \in \mathcal{C}$ be the hitting time of $b$ and $x=\Root(\mathcal{C})$. Take a cover of $[\{x\},C] \subseteq \mathcal{C}$ given by
$$\{B_h(D,1/(5n)) \ | \ D \in [\{x\},C]\}$$
which yields a finite collection $\{D_1,\ldots,D_{k(n)}\}$ with $D_1=\{x\}$ and $D_{k(n)}=C$ such that
$$D_i \subseteq D_{i+1} \quad \text{and} \quad h(D_i,D_{i+1})< 1/(2n).$$
Then, cover $[x,b]$ by
$$\{U_y \subseteq B_d(y,\varepsilon/3) \ | \ y \in [x,b], U_y \text{ open connected neighborhood of } y \text{ in } [x,b]\}$$
from which we can take a finite subcover and define $$Y=(y_1,\ldots,y_{p(\varepsilon)})$$
as the sequence of centers of the finite subcover together with the endpoints of the sets $D_i$ in $[x,b]$ all indexed by the increasing order of inclusion of its hitting times in $\mathcal{C}$. Note that $h(y_{i},[y_{i},y_{i+1}])<\varepsilon$.

We construct a new chain as follows: Start with the part $[D_1,D_2]=[\{x\},D_2] \subseteq \mathcal{C}$ which has a parametrization $\Gamma_{1,2}:[0,1] \to \cont(X)$. For the next part, let $D_j$ be the last set for which
$$D_2 \cap [x,b] = D_j \cap [x,b].$$
If $D_j =D_2$, then add $[D_2,D_3] \subseteq \mathcal{C}$ to the chain. If not, there is a parametrization $\gamma_{2,j}:[0,1] \to [D_2,D_j]$ and we define $\Gamma_{2,j}:[0,1] \to \cont(X)$ as
$$\Gamma_{2,j}(t) = \gamma_{2,j}(t) \cup [y_{i_j},y_{i_j+1}](t)$$
where $y_{i_j}$ is the last entry in the finite sequence $Y$ realizing the endpoint of $D_j$ in $[x,b]$, so we know that $h(y_{i_j},[y_{i_j},y_{i_j+1}])<\varepsilon$.
Then, we use a parametrization of $[D_j,D_{j+1}] \subset \mathcal{C}$ to define
$$\Gamma_{j,j+1}(t) = \Gamma_{2,j}(1) \cup \gamma_{j,j+1}(t).$$
Note that from the definition of $y_{i_j+1}$, we have $y_{i_j+1} \in D_{j+1}$, so we have built a chain that starts from $D_2$ and ends at $D_{j+1}$. Moreover, given $t_0 \in [0,1]$,
$$h(\Gamma_{2,j}(t_0),\gamma_{2,j}(t_0))\leq h(\gamma_{2,j}(t_0) \cup [y_{i_j},y_{i_j+1}],\gamma_{2,j}(t_0)) <\varepsilon$$
and
$$h(\Gamma_{j,j+1}(t_0),\gamma_{j,j+1}(t_0))=h(\gamma_{j,j+1}(t_0)\cup [y_{i_j},y_{i_j+1}],\gamma_{j,j+1}(t_0))<\varepsilon.$$
For simplicity, we will use $\ast$ to denote the concatenation of arcs. Therefore, define $\Gamma_{2,j+1}=\Gamma_{2,j} \ast \Gamma_{j,j+1}$ and $\Gamma_{1,j+1}=\Gamma_{1,2} \ast \Gamma_{2,j+1}$ is a parametrization of a chain at a distance not greater than $\varepsilon$ from $[\{x\},D_{j+1}] \subset \mathcal{C}$.

Suppose we have constructed such a chain until $D_i$. Then we repeat the same process yielding a chain parametrized by $\Gamma_{i,k}$ for some $k \in \{i+1,\ldots,k(n)\}$. Then, define
$$\Gamma_{1,k} = \Gamma_{1,i} \ast \Gamma_{i,k}.$$
Since $D_i \in \mathcal{C}$, the same argument given before shows that the chain keeps at a distance smaller than $\varepsilon$ from the chain $[\{x\},D_k] \subseteq \mathcal{C}$. Finally, when $D_{k(n)}$ is reached, add $[D_{k(n)},W_M] \subseteq \mathcal{C}$ to the chain. This yields a maximal order arc $\mathcal{C}'$ such that $h^2(\mathcal{C},\mathcal{C}')<\varepsilon$. We claim $\mathcal{C}' \not\in \scr{R}(n,b)$.

The construction of the chain is done together with a construction of a parametrization $\Gamma$, which is done by juxtaposing three kinds of chains:
\begin{enumerate}[label={(\roman*)}]
 \item $[D_j,D_{j+1}] \subseteq \mathcal{C}$.
 \item $[D_j \cup [y_{i_j},y_{i_j +1}],D_{j+1}]$.
 \item $[D_i,D_j](t) \cup [y_{i_j},y_{i_j+1}](t)$.
\end{enumerate}
To show that $\mathcal{C}' \not\in \scr{R}(n,b)$, suppose that there exist $K_1,K_2 \in \mathcal{C}'$ such that $K_1 \subsetneq K_2$, $h(K_1,K_2)\geq 1/n$, $B(b,1/n) \cap K_1 = \emptyset$ and
$$K_1 \cap [x,b] = K_2 \cap [x,b].$$
There is no $j$ for which $K_1,K_2$ lie in the interval of the cases (i) or (ii) since their endpoints are at a distance smaller than $1/n$. Also, in case (iii), every element of the interval has a different endpoint in $[x,b]$, so $K_1$ and $K_2$ cannot both belong to a single chain of type (iii). Note, however, that if $K_{i_1}$ belongs to the interior of a chain of type (iii), this means that $K_{i_2}$ necessarily have a different endpoint in $[x,b]$, so we are left with two possibilities to analyze: First, $K_1 \in [D_{j_1},D_{{j_1}+1}]$ for some $j_1$ and  $K_2 \in [D_{j_2} \cup [y_{i_{j_2}},y_{i_{j_2} +1}],D_{{j_2}+1}]$ for some $j_2$. This means that there exists an interval of type (iii) between $K_1$ and $K_2$, which again makes the endpoints different. On the other hand, suppose $K_1 \in [D_{j_1} \cup [y_{i_{j_1}},y_{i_{j_1} +1}],D_{{j_1}+1}]$ for some $j_1$ and $K_2 \in [D_{j_2},D_{{j_2}+1}]$ for some $j_2$. It cannot happen that $j_1+1=j_2$, because it means
$$h(D_{j_1},D_{j_2+1})=h(D_{j_1},D_{j_1+2})<1/n$$
and $h(K_1,K_2) \geq 1/n$. Thus, there is an interval of type (i) or (iii) between $K_1$ and $K_2$, hence the endpoints in $[x,b]$ are distinct. Therefore,
$$K_1 \cap [x,b] \ne K_2 \cap [x,b]$$
and $\mathcal{C}' \not\in \scr{R}(n,b)$.

\underline{Homeomorphism construction:}

Let $\mathcal{C}_1$ and $\mathcal{C}_2$ be chains in $\moa(W_M)$ satisfying (i)-(iv). Given $b \in \mathcal{B}(W_M)$, we will denote its hitting time in $\mathcal{C}_1$ by $K_b^1$ and in $\mathcal{C}_2$ by $K_b^2$. First, we will construct a bijection $\varphi:\mathcal{B}(W_M) \to \mathcal{B}(W_M)$ preserving betweenness, such that
 $$\varphi(\mathcal{B}(W_M) \cap K_b^1) = \mathcal{B}(W_M) \cap K_{\varphi(b)}^2$$
 for each $b \in \mathcal{B}(W_M)$.

 Let $x = \operatorname{Root}(C_1)$ and $y=\operatorname{Root}(C_2)$. We fix an enumeration $\{b_n\}$ of $\mathcal{B}(W_M)$ and define inductively functions $\varphi_n : X_n \to \mathcal{B}(W_M)$ and $\psi_n: Y_n \to \mathcal{B}(W_M)$, where
 \begin{itemize}
 \item $\{b_1,\ldots,b_n\} \cup \psi_{n-1}(Y_{n-1}) \subseteq X_n \subset \mathcal{B}(W_M)$ and $\{b_1,\ldots,b_n\} \cup \varphi_n(X_n) \subseteq Y_n \subset \mathcal{B}(W_M)$.
 \item $\varphi_{n-1} \subseteq \varphi_n$ and $\psi_{n-1} \subseteq \psi_n$.
\item $\varphi_n \circ \psi_{n-1} = \id_{Y_{n-1}}$ and $\psi_n \circ \varphi_n=\id_{X_n}$.
 \item $Y_{n-1} \cup \{y\}$ defines a tree $T_{n-1,n-1}^2$ and $\psi_{n-1}(Y_{n-1})\cup \{x\}$ defines a tree $T_{n-1,n-1}^1$ on $W_M$. The domain of $\psi_{n-1}$ is such that if $x_1,x_2 \in Y_{n-1}$, then the endpoint $z$ of $[x_1,y] \cap [x_2,y]$ other than $y$ is also in $Y_{n-1}$ and $\psi_{n-1}(z)=w$ where $w$ is the endpoint of $[\psi_{n-1}(x_1),x] \cap [\psi_{n-1}(x_2),x]$ other than $x$.
 \item $X_{n} \cup \{x\}$ defines a tree $T_{n,n-1}^1$ and $\varphi_{n}(X_{n})\cup \{y\}$ defines a tree $T_{n,n-1}^2$ on $W_M$. The domain of $\varphi_{n}$ is such that if $x_1,x_2 \in X_{n}$, then the endpoint $z$ of $[x_1,x] \cap [x_2,x]$ other than $x$ is also in $X_{n}$ and $\varphi_{n}(z)=w$ where $w$ is the endpoint of $[\varphi_{n}(x_1),y] \cap [\varphi_{n}(x_2),y]$ other than $y$.
 \item Betweenness is preserved by $\varphi_{n}$ and $\psi_{n}$.
 \item $\varphi_n(b') \in K_{\varphi_n(b)}^2$ if and only if $b' \in K_b^1$.
 \item $\psi_n(b') \in K_{\psi_n(b)}^1$ if and only if $b' \in K_b^2$.
 \end{itemize}
 Then, we will have
 $$\varphi=\bigcup_{n \in \bb{N}} \varphi_n \quad \text{and} \quad \psi=\bigcup_{n \in \bb{N}} \psi_n$$
 so that both are functions defined on $\mathcal{B}(W_M)$ and $\psi = \varphi^{-1}$ from the construction.

 First, let $X_1=\{b_1\}$. Define $\varphi_1(b_1)=b_1$ and hence $\psi_1(b_1)=b_1$ so $Y_1=\{b_1\}$. Now suppose that $\varphi_{n-1}$ and $\psi_{n-1}$ are defined. We will define $\varphi_n(b_n)$. First of all, either $b_n \in T_{n-1,n-1}^1$ or not. \underline{\textbf{If} $b_n$ \textbf{is in} $T_{n-1,n-1}^1$}, we define $X_n=\{b_1,\ldots,b_n\} \cup \psi_{n-1}(Y_{n-1})$, define $\varphi_n|_{X_{n-1}}=\varphi_{n-1}$ and $\varphi_n|_{\psi_{n-1}(Y_{n-1})}$ such that $\varphi_n \circ \psi_{n-1} = \operatorname{id}_{Y_{n-1}}$. To define $\varphi_n(b_n)$ we split in cases:
\begin{itemize}
 \item $b_n \in X_{n-1}$: Nothing has to be done.

 \item $b_n \in \psi_{n-1}(Y_{n-1}) \setminus X_{n-1}:$ We have that $\varphi_n(b_n)$ is $b_m \in Y_{n-1}$ for which $\psi_{n-1}(b_m)=b_n$. Hence, betweeness is preserved since $\psi_{n-1}$ preserves betweeness.

 \item $b_n \not\in \psi_{n-1}(Y_{n-1})$: This means $b_n$ is not a node of $T_{n-1,n-1}^1$, hence it lies in the minimal interval $(b_{n_1},b_{n_2})$ defined by nodes of $T_{n-1,n-1}^1$ where $K_{b_{n_1}}^1 \subseteq K_{b_{n_2}}^1$. By the definition of chain, we can find $b_{m_1},b_{m_2} \in X_{n-1} \cup \psi_{n-1}(Y_{n-1})$ such that $K_{b_{m_1}}^1$ is maximal and $K_{b_{m_2}}^1$ is minimal satisfying
  $$K_{b_{m_1}}^1 \subseteq K_{b_n}^1 \subseteq K_{b_{m_2}}^1.$$

 Since $b_n \in (b_{n_1},b_{n_2}) \subset K_{b_{n_2}}^1$ we have that $K_{b_{n}}^1 \subsetneq K_{b_{n_2}}^1$. Thus, we claim that $K_{b_{n_1}}^1 \subseteq K_{b_n}^1$. Suppose towards a contradiction that $K_{b_n}^1 \subsetneq K_{b_{n_1}}^1.$ Then, let $Y=[x,b_{n_1}] \cup [b_{n_1},b_{n_2}]$ and
 $$z=r_{Y,[b_{n_1},b_{n_2}]}(x).$$
 From the fact that $W_M$ is uniquely arcwise connected, we have $[x,b_{n_1}]=[x,z] \cup [z,b_{n_1}]$ and $[x,b_{n_2}] = [x,z] \cup [z,b_{n_2}]$. Moreover, $z \not\in \{b_{n_1},b_{n_2}\}$ since we are supposing $K_{b_n}^1 \subsetneq K_{b_{n_1}}^1$. Thus, by induction hypothesis, $z$ has to be a node of $T_{n-1,n-1}^1$, which is a contradiction since $(b_{n_1},b_{n_2})$ does not contain any node of $T_{n-1,n-1}^1$. 

 Therefore, we have
 $$K_{b_{n_1}}^1 \subseteq K_{b_{m_1}}^1 \subsetneq K_{b_{n}}^1 \subsetneq K_{b_{m_2}}^1 \subseteq K_{b_{n_2}}^1.$$
 Let $z_1$ be the endpoint of $K_{\varphi_n(b_{m_1})}^2$ in $(\varphi_n(b_{n_1}),\varphi_n(b_{n_2}))$ and $z_2$ of $K_{\varphi_n(b_{m_2})}^2$. Choose any
 $$z \in (z_1,z_2) \cap \mathcal{B}(W_M)$$
 having the same order as $b_n$ so that
 $$K_{\varphi_n(b_{m_1})}^2 \subsetneq K_{z}^2 \subsetneq K_{\varphi_n(b_{m_2})}^2.$$
 Let $z=\varphi_n(b_{n})$. This choice is possible since $(\varphi_n(b_{n_1}),\varphi_n(b_{n_2}))$ is an arc $A$ such that $K_{\varphi_n(b_{n_1})}^2 \subseteq K_{\varphi_n(b_{n_2})}^2$ in $\mathcal{C}_2$ and $\emptyset \ne K_{\varphi_n(b_{n_1})}^2 \cap A \subsetneq K_{\varphi_n(b_{n_2})}^2 \cap A \subsetneq A$.

 We claim that the construction preserve betweenness among the points already in the domain and the point added. Given $b_{k_1}$ and $b_{k_2}$ already in $X_n\setminus\{b_n\}$, we can assume either $b_n \in [b_{k_1},b_{k_2}]$ or $b_{k_1} \in [b_n,b_{k_2}]$. In the first case we have $b_n \in [b_{n_1},b_{n_2}]$ and by definition $b_{n_1},b_{n_2} \in [b_{k_1},b_{k_2}]$, so $\varphi_n(b_{n_1}),\varphi_n(b_{n_2}) \in [\varphi_n(b_{k_1}),\varphi_n(b_{k_2})]$ by hypothesis and betweenness is preserved. In the second case we have $b_{k_1} \in [b_n,b_{k_2}]$ so that
 $$b_n \in [b_{n_1},b_{n_2}] \subseteq [b_{n_1},b_{k_1}] \subseteq [b_{n_1},b_{k_2}].$$
 Since all these points $b_{n_1},b_{n_2},b_{k_1},b_{k_2}$ are already in the domain of $\varphi_n$, their betweenness relation is preserved, so $\varphi_n(b_{k_1}) \in [\varphi_n(b_{n_2}),\varphi_n(b_{k_2})]$ and hence $\varphi_n(b_{k_1}) \in [\varphi_n(b_{n}),\varphi_n(b_{k_2})]$.
 \end{itemize}

 {\bf{\underline{If $b_n \not\in T_{n-1,n-1}^1$}}}, then there exist optimal $b_{m_1},b_{m_2}$ in $\psi_{n-1}(Y_{n-1})$ for which one of the following holds
 \begin{enumerate}[label={(\roman*)}]
 \item $K_{b_{m_1}}^1 \subsetneq K_{b_{n}}^1 \subsetneq K_{b_{m_2}}^1$.
 \item $\{x\} \subsetneq K_{b_{n}}^1 \subsetneq K_{b_{m_2}}^1$.
 \item $K_{b_{m_1}}^1 \subsetneq K_{b_{n}}^1 \subsetneq W_M$.
 \end{enumerate}
 By optimal we mean that $K_{b_{m_1}}^1$ is maximal satisfying $K_{b_{m_1}}^1 \subseteq K_{b_n}^1$ and $K_{b_{m_2}}^1$ is minimal satisfying $K_{b_n}^1 \subseteq K_{b_{m_2}}^1$.
 In any case, given that
 $$z = r_{W_M,T_{n-1,n-1}^1}(b_n) \in T_{n-1,n-1}^1,$$
 we can define $X_n=\{b_1,\ldots,b_n\} \cup \psi_{n-1}(Y_{n-1}) \cup \{z\}$ and define $\varphi_n(z)$ as in the case above. Let
 $$Y = W_M \setminus \cup \{K \in \mathcal{C}_2 \ | \ K \subsetneq W_M\}.$$
 Note that $Y$ is dense so we can fix
 $$z' \in r^{-1}_{W_M,K_{\varphi_{n}(z)}^2}(z) \cap Y.$$
 By definition, $[\varphi_n(z),z']$ is nondegenerate and
 $$\varphi_n(z) \in K_{\varphi_n(z)}^2 \subsetneq K_{\varphi_n(b_{m_2})}^2$$
 so in case (i) we can call $z_2$ the endpoint of $K_{\varphi_n(b_{m_2})}^2$ in $[\varphi_n(z),z']$. Therefore, we define $\varphi_n(b_n)$ as any branching point $b \in (\varphi_n(z),z_2) \setminus K_{\varphi_n(b_{m_1})}^2$ having the same order as $b_n$. The choice of $b$ is possible since willfulness implies that the set is nonempty. In case (ii) it is enough to take any element in $(\varphi_n(z),z_2)$ and in case (iii) it is not necessary to fix $z_2$, so the choice is made on $(\varphi_n(z),z') \setminus K_{\varphi_n(b_{m_1})}^2$.

 For betweenness, again consider $b_{k_1},b_{k_2}$ in $X_n$, but in this case only $b_{k_1} \in [b_n,b_{k_2}]$ is possible, since $[b_{k_1},b_{k_2}] \subseteq T_{n-1,n-1}^1$ and $b_n \not\in T_{n-1,n-1}^1$ so $b_n \not\in [b_{k_1},b_{k_2}]$. We know that
 $$z = r_{W_M,T_{n-1,n-1}^1}(b_n) \in T_{n-1,n-1}^1$$
 was added to the domain and by construction $b_{k_1} \in [z,b_{k_2}]$, so $\varphi_n(b_{k_1}) \in [\varphi_n(z),\varphi_n(b_{k_2})]$ from previous case and
 $$\varphi_n(b_{k_1}) \in [\varphi_n(b_n),\varphi_n(b_{k_2})].$$

 Finally, we need to preserve the induction hypothesis, but the construction makes $[x,b_n] \cap T_{n,n-1}^1 = [x,z]$ and thus either the endpoint other than $x$ in
 $$[b_m,x]\cap [z,x]$$
 is already a node of $T^1_{n-1,n-1}$ or it is $z$, which is a node now. The same holds for $T^2_{n,n-1}$.

 An analogous argument is used to define $Y_n$ and $\psi_{n}(Y_n)$ from $T^2_{n,n-1}$, yielding trees $T^1_{n,n}$ and $T^2_{n,n}$. Then, we fall again on the inductive case, which means we can define the function $\varphi:\mathcal{B}(W_M) \to \mathcal{B}(W_M)$ that preserves betweenness relation and also the hitting levels and the order.

From Proposition \ref{duchesnehomeo} this bijection $\varphi$ can be extended to a homeomorphism $\Phi: W_M \to W_M$. By construction $\varphi$ is such that $K_{b_{n_1}}^1 \subseteq K_{b_{n_2}}^2$ implies $K_{\varphi(b_{n_1})}^2 \subseteq K_{\varphi(b_{n_2})}^2$. Also, since every hitting time is homeomorphic to $W_M$, we have that for any $b \in \mathcal{B}(W_M)$
  $$\overline{\mathcal{B}(K_{b}^1)} = K_{b}^1 \quad \text{and} \quad \overline{\mathcal{B}(K_{\varphi(b)}^2)} = K_{\varphi(b)}^2.$$
  Moreover, $\overline{\varphi(\mathcal{B}(K_{b}^1))} = \overline{\mathcal{B}(K_{\varphi(b)}^2)}$ since $\varphi^{-1}$ has the same properties of $\varphi$.
  Therefore, it holds that
  $$\Phi(K_b^1)=\Phi(\overline{\mathcal{B}(K_{b}^1)}) = \overline{\Phi(\mathcal{B}(K_{b}^1))} = \overline{\varphi(\mathcal{B}(K_{b}^1))} = \overline{\mathcal{B}(K_{\varphi(b)}^2)}=K_{\varphi(b)}^2.$$
  Since $\mathcal{C}_i$ is willful, the collection of hitting times of branching points is dense in both chains. Therefore, the homeomorphism $\overline{\Phi}: \cont(W_M) \to \cont(W_M)$ induced by $\Phi$ as $\overline{\Phi}(K)=\Phi(K)$ is such that $\overline{\Phi}(\mathcal{C}_1)=\mathcal{C}_2$.
  
\end{proof}

\subsection{Another class of equivalent chains of $\moa(W_\omega)$}

On \cite{banicinverse}, the Wa\.zewski universal dendrite $W_\omega$ is constructed as the generalized inverse limit of a single set-valued bonding function $f$. We can use this construction to create a maximal order arc in $\moa(W_\omega)$.

The bonding function (or a relation) $f:[0,1] \to \mathcal{P}([0,1])$ used to describe the Wa\.zewski dendrite is defined by its graph as follows: let $\{(a_n,b_n)\}_{n \in \bb{N}}$ be a collection of pairs where $\mathcal{A}=\{a_n\}_{n \in \bb{N}}$ is a dense collection of points in $[0,1]$, $a_n < b_n$, and for every $n$ there exists $m_n \in \bb{N}$ for which $b_m < a_n$ whenever $a_m < a_n$ and $m \geq m_n$. Then $f$ is the set-valued function such that for every $s \in [0,1]$
$$f^{-1}(s)= \begin{cases}
    [a_n,b_n], \text{ if } s=a_n \text{ for some } n \in \bb{N},\\
    \{s\}, \text{ otherwise.}
\end{cases}$$
This means that the graph of $f$ is the diagonal with an arc starting at $(a_n,a_n)$ and ending at $(b_n,a_n)$ attached to the diagonal for every $a_n \in \mathcal{A}$. Then
$$W_\omega = \varprojlim \{[0,1],f\}_{k=1}^\infty = \{x \in [0,1]^{\bb{N}} \ | \ x_i \in f(x_{i+1}) \text{ for each } i \in \bb{N}\}.$$

For every $t \in [0,1]$ we can define a new function $f_t:[0,t] \to \mathcal{P}([0,t])$ as
$$f_t^{-1}(s)= \begin{cases}
    [a_n,a_n+(t-a_n)(b_n-a_n)/(1-a_n)], \text{ if } s=a_n \text{ for some } n \in \bb{N},\\
    \{s\}, \text{ otherwise.}
\end{cases}$$
By definition, the inverse limit defined using $f_t$ also yields the Wa\.zewski dendrite as a result, but it can be seen as a subspace of
$$\varprojlim \{[0,1],f\}_{k=1}^\infty.$$
Hence, we can create a map $\gamma:[0,1] \to \cont(W_M)$ given by
$$\gamma(t) = \varprojlim \{[0,t],f_t\}_{k=1}^\infty$$
that is a maximal order arc in $\cont(W_M)$.

In this construction, sequences of the form $(x_1,\ldots,x_k,a_n,a_n,\ldots)$ for  some $a_n \in \mathcal{A}$ represent branching points of $W_\omega$. On the other hand, if $f_t^{-1}(a_n)=[a_n,c]$, a sequence of the form $(x_1,\ldots,x_k,a_n,c,c,\ldots)$ represent endpoints of $\gamma(t)$. Therefore, if  $\mathcal{A}=\bb{Q} \cap [0,1]$, then $f_t^{-1}(a_n)$ is an interval with rational endpoints for every $n \in \bb{N}$. Thus, $f_t^{-1}(a_n)=[a_n,a_m]$ for some $m \in \bb{N}$ and in $\gamma(t)$ we can find points of the form
$$(x_1,\ldots,x_k,a_n,a_m,a_m,\ldots)$$
which are endpoints of $\gamma(t)$ but also branching points of $W_\omega$. Moreover, these kind of points are dense, since $\mathbb{Q} \cap [0,1]$ is dense, so we have found a chain having among other properties, the property that its elements have densely many branching points as endpoints. 

Note that the chain described above has the opposite behavior when compared to the ones in the comeager set described in Theorem \ref{ambientlyeqWM}, however, it is still possible to show that chains hitting densely many branching points at  time with the other same properties as the chains in the comeager set are ambiently equivalent.

\begin{theo}\label{ambientlyeqW}
There exists a chain $\mathcal{C} \in \moa(W_\omega)$ satisfying the properties below:
\begin{enumerate}[label={(\roman*)}]
   \item The root of $\mathcal{C}$ is an endpoint of $W_\omega$.
   \item If $K,L \in \mathcal{C}$ with $K \subsetneq L$, then $K$ is nowhere dense in $L$.
   \item If $K \in \mathcal{C}$ and $\ep(K) \cap \mathcal{B}(W_\omega) \ne \emptyset$, then
   $$\ep(K) \subseteq \mathcal{B}(W_\omega) \cup \ep(W_\omega)$$
   and $\ep(K) \cap \mathcal{B}(W_\omega)$ is dense in $K$.
   \item $\mathcal{C}$ is willful.
  \end{enumerate}
\end{theo}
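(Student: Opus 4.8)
The plan is to exhibit a single chain with the required properties, namely the order arc $\mathcal{C}=\gamma([0,1])$ coming from the generalized inverse limit description discussed above, for a careful choice of the defining data. I would fix the dense set $\mathcal{A}=\bb{Q}\cap(0,1)$ and choose every $b_n\in\bb{Q}\cap(0,1)$, so that all attached arcs have rational endpoints. Recall that $\gamma$ is a maximal order arc running from the singleton $\gamma(0)=\{(0,0,\ldots)\}$ to $\gamma(1)=W_\omega$, so $\mathcal{C}\in\moa(W_\omega)$ and it remains only to check (i)--(iv). Property (i) is immediate: since $0\notin\mathcal{A}$ no arc is attached at $0$, hence the diagonal point $(0,0,\ldots)$ has order one and is the root of $\mathcal{C}$.

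For (ii) I would use that the truncation height $c_n(t)=a_n+(t-a_n)\frac{b_n-a_n}{1-a_n}$ is, for each $n$, a strictly increasing affine function of $t$. Thus for $s<t$ every arc attached at a point $a_n\le s$ is strictly longer in $\gamma(t)$ than in $\gamma(s)$, and every arc attached at some $a_n\in(s,t]$ is entirely new. Since $\{a_n\}$ is dense in $[0,1]$, these increments accumulate at a dense subset of $\gamma(t)$, so $\gamma(t)\setminus\gamma(s)$ is dense in $\gamma(t)$; as $\gamma(s)$ is closed, this is exactly the statement that $\gamma(s)$ is nowhere dense in $\gamma(t)$.

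The heart of the argument, and the step I expect to be the main obstacle, is (iii), which requires a precise description of $\ep(\gamma(t))$ in the inverse-limit coordinates. I would show that every endpoint of $\gamma(t)$ is either a genuine endpoint inherited from $W_\omega$ (necessarily of order one in $W_\omega$, hence in $\ep(W_\omega)$) or a truncation tip of the form $(\ldots,a_n,c_n(t),c_n(t),\ldots)$; only the tips can have larger order in $W_\omega$, and such a tip is a branching point of $W_\omega$ exactly when $c_n(t)\in\mathcal{A}$. Because $a_n,b_n\in\bb{Q}$ and the slope of $c_n(\cdot)$ is a nonzero rational, $c_n(t)\in\bb{Q}$ for all $n$ if $t\in\bb{Q}$ and $c_n(t)\notin\bb{Q}$ for all $n$ if $t\notin\bb{Q}$. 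Hence for irrational $t$ no endpoint of $\gamma(t)$ is a branching point of $W_\omega$, the hypothesis of (iii) fails, and the implication holds vacuously; for rational $t$ every tip lies in $\mathcal{A}=\bb{Q}\cap(0,1)$, so $\ep(\gamma(t))\subseteq\mathcal{B}(W_\omega)\cup\ep(W_\omega)$, and the tips are dense in $\gamma(t)$ (the endpoints of the copy $\gamma(t)\simeq W_\omega$ are dense, and by the rational choice the ``cut'' endpoints form a dense subfamily). The delicate point is justifying the dichotomy ``endpoint $=$ inherited endpoint or tip'' and that inherited endpoints retain order one in $W_\omega$; this is where the coordinatewise analysis of the inverse limit has to be carried out with care.

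Finally, for willfulness (iv) I would again exploit the strict monotonicity of each $c_n$. Given an arc $A$ and $\gamma(s)\subsetneq\gamma(t)$ with $\emptyset\ne A\cap\gamma(s)\subsetneq A$, the point $p$ at which $A$ leaves $\gamma(s)$ is a frontier point of the subdendrite $\gamma(s)$ in $W_\omega$; since $p$ has order at least two in $W_\omega$ (otherwise $A$ could not continue past it), $p$ is a truncation tip at some height $c_n(s)$. The portion of $A$ immediately beyond $p$ runs up the arc $[a_n,b_n]$ toward $b_n$ and enters $\gamma(t)$ because $c_n(t)>c_n(s)$, so $A\cap\gamma(s)\subsetneq A\cap\gamma(t)$. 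A small amount of care is needed at the degenerate instants $t=a_n$ where a truncated arc has length zero, but these do not affect the conclusion. This establishes (i)--(iv); the ambient equivalence of any two chains of this type, alluded to before the statement, would then follow by adapting the back-and-forth homeomorphism construction of Theorem \ref{ambientlyeqWM}.
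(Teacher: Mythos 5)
You and the paper divide the labor in opposite ways: the paper's written proof dismisses existence with ``the existence was discussed in the beginning of this section'' and devotes essentially all of its text to the ambient-equivalence clause, whereas you give a detailed existence check and compress the equivalence to one sentence. Your verifications of (i), (ii) and (iv) are in the spirit of what the paper intends and are more explicit than anything the paper records. The genuine gap is in (iii), precisely at the step you flag as delicate, and your proposed resolution would fail. A truncation tip $T=(x_1,\ldots,x_k,a_n,c,c,\ldots)$ with $c=c_n(t)$ is an endpoint of $\gamma(t)$ only if $f_t^{-1}(c)$ is degenerate. But $c_n(t)=a_n+(t-a_n)\tfrac{b_n-a_n}{1-a_n}<t$ since $b_n<1$, so if $c=a_m\in\mathcal{A}$ then $t>a_m$ and $f_t^{-1}(a_m)=[a_m,c_m(t)]$ is nondegenerate; the points $(x_1,\ldots,x_k,a_n,a_m,w,w,\ldots)$ with $w\in(a_m,c_m(t)]$ then form a nondegenerate arc of $\gamma(t)$ converging to $T$ as $w\to a_m$, and the same happens at every deeper coordinate. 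So exactly when a tip lies in $\mathcal{A}$ (your rational case) it acquires branches inside $\gamma(t)$ and is a branching point of $\gamma(t)$, not an endpoint. Your rationality computation for $c_n(t)$ is correct but cuts against you: for rational $t$ the set $\ep(\gamma(t))\cap\mathcal{B}(W_\omega)$ appears to reduce to at most the top of the spine, and the density demanded in (iii) fails. This defect is already latent in the paper's informal discussion preceding the theorem, so you are in the same position as the authors, but an outline that rests on ``tips are endpoints'' cannot be completed as written; either the bonding functions $f_t$ must be modified so that the truncation values are genuinely terminal, or the endpoint analysis must be redone.

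Separately, if the ``Moreover'' clause is counted as part of the statement, your closing sentence understates the remaining work. The paper's entire written proof is the adaptation of the back-and-forth argument of Theorem \ref{ambientlyeqWM}, and the adaptation is not routine: condition (iii) here is the opposite of condition (iii) there (densely many branching points among $\ep(K)$ rather than at most one), so hitting levels contain many branching points, the case $K_{b_{m_1}}^1=K_{b_n}^1$ occurs, and $\varphi_n(b_n)$ must then be chosen among the endpoints of the corresponding hitting time of $\mathcal{C}_2$ rather than strictly between two hitting times, with a separate argument that betweenness survives. That case analysis is the substance of the paper's proof and is absent from your outline.
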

Moreover, any two chains of $\moa(W_\omega)$ satisfying the conditions above are ambiently equivalent.
\begin{proof}

The existence was discussed in the beginning of this section. Given $y \in W_\omega,$ we denote by $K_y^i$ the hitting time of $y$ in $\mathcal{C}_i$. The construction of the homeomorphism between two chains $\mathcal{C}_1$ and $\mathcal{C}_2$ satisfying these properties is done in a very similar to the proof of Theorem \ref{ambientlyeqWM}. However, since the hitting levels may contain more than one branching point, it may happen $K_{b_{m_1}}^1=K_{b_n}^1$ or $K_{b_{m_2}}^1=K_{b_n}^1$. When $b_n \in T_{n-1,n-1}^1$, $b_n$ is also an endpoint of $K_{b_{m_1}}^1$ that lies on $(b_{n_1},b_{n_2})$. Therefore, we choose as $\varphi_n(b_n)$ the endpoint of $K_{\varphi_n(b_{m_1})}^2$ in $(\varphi_n(b_{n_1}),\varphi_n(b_{n_2}))$ and
 $$K_{\varphi_n(b_{m_1})}^2 = K_{\varphi_n(b_{n})}^2.$$
 Betweenness is preserved by the minimality of the interval $(b_{n_1},b_{n_2})$. If $b_{n} \not\in T_{n-1,n-1}^1$, let again
$$z = r_{W_\omega,T_{n-1,n-1}^1}(b_n) \in T_{n-1,n-1}^1,$$
 hence we define $\varphi_n(z)$ as in the case when $b_n \in T_{n-1,n-1}^1$. Note that $\varphi_n(z) \not\in \ep(K_{\varphi_n(b_{m_1})}^1)$ since $b_n \not\in T_{n-1,n-1}^1$ and $K_{b_{m_1}}^1=K_{b_n}^1$. Thus,
 $$(r^{-1}_{W_\omega,T_{n-1,n-1}^2}(\varphi_n(z)) \setminus T_{n-1,n-1}^2)$$
 is open in $W_\omega$ and meets $K_{\varphi_n(b_{m_1})}^2$. Hence, there is an open component of $r^{-1}_{W_\omega,T_{n-1,n-1}^2}(\varphi_n(z))$ that does not meet $T_{n-1,n-1}^2$, and there we can find
 $$z' \in  (r^{-1}_{W_\omega,K_{\varphi_n(z)}^2}(\varphi_n(z))  \cap (T(\varphi_n(b_{m_1}),\mathcal{C}_2) \cap \mathcal{B}(W_\omega))$$
 because $\ep(K_{\varphi_n(b_{m_1})})$ is dense in $K_{\varphi_n(b_{m_1})}$. Therefore, we define $z' = \varphi_n(b_n)$. Betweenness is preserved by the same argument as before.

 The construction of the homeomorphism is also done in the same fashion.
\end{proof}

\printbibliography
\end{document}